\def\R{\mathbb{R}}
\def\p{\partial}
\def\vp{\varphi}
\def\E{\mathbb{E}}
\def\dd{\mathrm{d}}
\def\P{\mathbb{P}}
\newcommand{\blue}{}
\newcommand{\mb}{\mathbf}
\newcommand{\sL}{\mathcal{L}}
\providecommand{\bbs}[1]{\left(#1\right)}
\newcommand{\dqp}{\ud \mb q \ud \mb p}
\newcommand{\dqpz}{\ud \mb q \ud \mb p \ud \mb z}
\newcommand{\eps}{\varepsilon}
\newcommand{\ud}{\,\mathrm{d}}
\newcommand{\8}{\infty}
\numberwithin{equation}{section}
\newtheorem{lemma}{Lemma}[section]
\newtheorem{theorem}{Theorem}[section]
\newtheorem{proposition}{Proposition}[section]
\newtheorem{definition}{Definition}[section]
\newtheorem{remark}{Remark}[section]
\begin{document}

\title[Reversibility and linear response theory]{Some properties on the reversibility and the linear response theory of Langevin dynamics}

\author[Y. Gao]{Yuan Gao}
\address{Department of Mathematics, Purdue University, West Lafayette, IN}
\email{gao662@purdue.edu}

\author[J.-G. Liu]{Jian-Guo Liu}
\address{Department of Mathematics and Department of Physics, Duke University, Durham, NC}
\email{jliu@math.duke.edu}

\author[Z. Liu]{Zibu Liu}
\address{Department of Mathematics, Duke University, Durham, NC}
\email{zibu.liu@duke.edu}

\keywords{Onsager's principle, hypocoercivity, fluctuation-dissipation relation, non-equilibrium system, asymptotic behaviors}

\begin{abstract}
Linear response theory is a fundamental framework studying the macroscopic response of a physical system to an external perturbation. This paper focuses on the rigorous mathematical justification of linear response theory for Langevin dynamics. We give some equivalent characterizations for reversible overdamped/underdamped Langevin dynamics, which is the unperturbed reference system. Then we clarify sufficient conditions for the smoothness and exponential convergence to the invariant measure for the overdamped case. We also clarify those sufficient conditions for the underdamped case, which corresponds to hypoellipticity and hypocoercivity. Based on these, the asymptotic dependence of the response function on the small perturbation is proved in both finite and infinite time horizons. As applications, Green-Kubo relations and linear response theory for a generalized Langevin dynamics are also proved in a rigorous fashion.
\end{abstract}

\maketitle

\section{Introduction}
 
Linear response theory is a general framework for studying the behavior of a physical system under small external perturbations. While the microscopic fluctuations in a physical system are usually  complex and  sustained, linear response theory provides a way to characterize or predict the corresponding macroscopic behaviors of the system. For instance, the famous Einstein relation \cite{E1905} for Brownian motion fits within the framework of linear response theory, using the correlation of Brownian particles to predict macroscopic diffusion coefficients.

Linear response theory not only bridges the microscopic first physical principles with macroscopic properties (e.g., electrical, thermal, transport, and mechanical properties), but it also helps in understanding or predicting a nonequilibrium system, which deviates from equilibrium states, using only the information of the original equilibrium system. Thus, it plays a fundamental role in theoretical physics and statistical mechanics.

Given these important applications, linear response theory has been extensively studied, and various versions of response formulas for different physical contexts have been developed, cf. \cite{gallavotti1995dynamical, marconi2008fluctuation, campisi2011colloquium, seifert2012stochastic, dabelow2019irreversibility}.

In this paper, we do not attempt to propose new linear response relations. Instead, we focus on clarifying mathematical conditions and collecting further properties with quantitative estimates for the well-known linear response results for Langevin dynamics. Specifically, in a mathematically rigorous way, we study the asymptotic dependence of the solution to the perturbed Fokker-Planck equation concerning the small external force added to the original reversible Langevin dynamics.


We begin with an overdamped or underdamped Langevin dynamics, initially in a reversible form or with a detailed balanced invariant measure known as the Gibbs measure. Under mild assumptions on the conservative drift, the probability distribution will exponentially converge to the invariant measure as time tends to infinity. To study the nonequilibrium system perturbed by a small non-conservative external force in the long-time regime, the existence, smoothness, and exponential stability of the    perturbed invariant measure are essential preliminaries. These involve semigroup theory, Harris' theorem, and, particularly, hypoellipticity and hypocoercivity for the Fokker-Planck equation in the underdamped case.

In Section \ref{sec2}, we first provide the reversibility conditions in five equivalent forms for overdamped and underdamped Langevin dynamics, and generalized Langevin dynamics.   

In Section \ref{sec_LRT}, we first clarify the existence and uniqueness of positive invariant measures for perturbed irreversible Langevin dynamics. The smoothness and exponential convergence of the solution to the Fokker-Planck equation for the perturbed irreversible Langevin dynamics are also summarized, with proofs provided in the Appendix.
After these preparations, we give a rigorous verification of linear response theory for overdamped Langevin dynamics. We analyze the asymptotic behavior of the response function, which is defined as the difference (in weak form with respect to any observation test function) between the perturbed dynamics of the probability distribution and the initial data given by the unperturbed Gibbs measure $\rho_0(\mb q)$ 
\[
R(t,\eps;\varphi) = \dfrac{1}{\eps} \left( \int_{\R^d} \varphi(\mb q)\rho^{\eps}(\mb q,t)\dd\mb q - \int_{\R^d} \varphi(\mb q)\rho_0(\mb q)\dd\mb q \right).
\]

In Theorem \ref{thm:LRTO}, we prove the convergence of the response function $R(t,\eps;\varphi)$ for either fixed $t$ or fixed $\eps$, as well as the double limits for both $\lim_{\eps \to 0^+} \lim_{t \to +\infty}$ and $\lim_{t \to +\infty} \lim_{\eps \to 0^+}$. Moreover, we also obtain the uniform convergence of the response function for $\eps \leq \eps_0$ in time $t \in [0, +\infty)$. In the special case where {\blue the external perturbation is a conservative force $\nabla W$ for some potential $W$,} we derive the Green-Kubo formula, which connects the long-time behavior of the response function to the autocorrelation function over an infinite time horizon for the original unperturbed dynamics 

\[
\lim\limits_{\eps \to 0^+} \lim\limits_{t \to \infty} R(t,\eps;\mathcal{L}_0 g) = - \int_0^{+\infty} K_{\mathcal{L}_0 g, \mathcal{L}_0 W}(t) \ud t, 
\quad K_{AB}(t) := \int_{\R^d} (e^{t \mathcal{L}_0} A) B \rho_0 \dd\mb q,
\]
where $\mathcal{L}_0$ is the unperturbed generator.


In Section \ref{sec4}, we first summarize key well-posedness results and the hypoellipticity and hypocoercivity of the Fokker-Planck equation in the underdamped case. Villani's seminal work on hypocoercivity \cite{villani2009hypocoercivity} shows that exponential convergence to the invariant measure can still be achieved under mild conditions on the original potential. Given that we consider an external perturbation in a conservative form and with compact support, the perturbed Fokker-Planck equation in the underdamped case will still converge exponentially to the new invariant measure. Based on this, we prove parallel results for the response function $R(t,\eps; \varphi)$ under both fixed $t$ or $\eps$, and for the double limits behavior in Theorem \ref{thm:LRTU}. Lastly, in Section \ref{sec4.2}, as an application of Theorem \ref{thm:LRTU}, we derive the linear response theory for generalized Langevin dynamics with an exponential kernel using the corresponding augmented underdamped Langevin dynamics.


Our results on the linear response theory for stochastic dynamics extend the case where the external force is a general vector field depending on spatial variables. These results are parallel to those in the comprehensive book by \textsc{Pavliotis} \cite{Pavliotis}, which focuses on the case where the external force is independent of spatial variables. On the other hand, the abstract theorem by \textsc{Hairer-Majda} in \cite{hairer2010simple} provides a general methodology for justifying linear response theory for abstract Markov evolutions under several assumptions. Our work falls within the general framework of \cite{hairer2010simple}, but we offer self-contained conditions with detailed verification for Langevin dynamics and prove the uniform convergence of the response function for small external perturbations $\eps \leq \eps_0$ uniformly in time $t \in [0, +\infty)$. For more on linear response theory for deterministic dynamical systems, see \cite{ruelle2009review}.


The remaining sections of this paper are organized as follows:
In Section \ref{sec2}, we provide equivalent conditions for reversibility in overdamped, underdamped, and generalized Langevin dynamics.
In Section \ref{sec_LRT}, we prove the linear response theory and the Green-Kubo relation for overdamped Langevin dynamics.
In Section \ref{sec4}, we prove the linear response theory for underdamped Langevin dynamics and generalized Langevin dynamics.
All omitted proofs for reversibility, hypoellipticity, and hypocoercivity are given in Appendices \ref{sec:proof} and \ref{app:hy}.

\section{Equivalent conditions of reversibility of Langevin dynamics} \label{sec2}

Before studying the linear response theory for Langevin dynamics in the large-time regime, we need to first establish the well-posedness and stability of the invariant measure, which are built on the reversibility of the original unperturbed system. In this section, we provide several equivalent conditions for the reversibility of both overdamped and underdamped Langevin dynamics. Although these conditions are classic, some are less well-known but useful in practice. Therefore, we rigorously clarify these reversibility conditions for the reader's convenience.

\subsection{Definitions and preliminaries}

{\blue
We will study the reversibility of both overdamped, underdamped and generalized Langevin dynamics. In the underdamped and generalized Langevin dynamics case, some physical variables will have even or odd parities. A typical odd variable is velocity $(\mb{p})$, while typical even variables are position $(\mb{q})$, force $(\mb{f})$, and acceleration $(\mb{a})$. To preserve more flexibility, we allow different components of $\mb{x}$ to have different parities.

Another essential difference is the invariant measure (see \eqref{eq:req}) for the underdamped case, which involves both kinetic and potential energy, while the kinetic energy is neglected for the overdamped case. Below, we provide some basic definitions for reversibility.

\begin{definition}[Stationary process]
    A stochastic process $\{\mb{x}(t) \in \R^d, \, t \geq 0\}$ is stationary if for all $\tau, t_1, t_2, ..., t_n \in \R^+$, $n \in \mathbb{Z}^+$, and Borel sets $\mathcal{B}_j \in \R^d$, $j=1, 2, ..., n$, 
    \begin{align} \label{def:stationary}
        \P(\mb{x}(t_j) \in \mathcal{B}_j, 1 \leq j \leq n) = \P(\mb{x}(t_j + \tau) \in \mathcal{B}_j, 1 \leq j \leq n).
    \end{align}
    Taking $n = 1$, we deduce that the law of $\mb{x}(t)$ is invariant.
\end{definition}

\begin{definition}[Time-reversed process]
    Let $\{\mb{x}(t) \in \R^d, \, t \geq 0\}$ be a stochastic process. Fix time $T > 0$. The time-reversed process of $\mb{x}(t)$ (w.r.t. time $T$) is defined as 
    \begin{equation} \label{def:timereversed}
        \mb{x}^*(t) := (\eps_i x_i(T-t))_{1 \leq i \leq n},
    \end{equation}
    where
    \begin{equation*}
        \eps_i :=
        \begin{cases}
            1, & \text{if } x_i \text{ is an even variable}; \\
            -1, & \text{if } x_i \text{ is an odd variable}.
        \end{cases}
    \end{equation*}
\end{definition}
}
\begin{definition}[Reversibility]
    A stationary process $\{X_t \in \R^d, \, t \geq 0\}$ is reversible if for any $T > 0$, the time-reversed process $\{X^*_t, 0 \leq t \leq T\}$ (w.r.t. $T$) has the same finite-dimensional distribution as the original process $X_t$, i.e., for any $t_1, t_2, ..., t_n \in [0, T]$, $n \in \mathbb{Z}^+$, and Borel sets $\mathcal{B}_j \in \R^d, j=1, 2, ..., n$,
    \begin{align} \label{def:reversibility}
        \P(X_{t_j} \in \mathcal{B}_j, 1 \leq j \leq n) = \P(X^*_{t_j} \in \mathcal{B}_j, 1 \leq j \leq n).
    \end{align}
\end{definition}

\subsection{Reversibility of the overdamped Langevin dynamics}

We use $\mb{q}_t$ to indicate the stochastic process at time $t$, and we also use $\mb{q}$ as the displacement vector in $\R^d$, following the convention in mechanics. The overdamped Langevin equation is given by the following stochastic differential equation (SDE) 
\begin{align} \label{eq:OLE}
    \dd \mb{q}_t = \mb{b}(\mb{q}_t) \dd t + \bm{\sigma}(\mb{q}_t) \dd \mb{B}_t, \quad \mb{q}_{t=0} = \mb{q_0},
\end{align}
where $\mb{b} \in C^\infty(\R^d; \R^d), \mb{q}_t \in \R^d$, and $\bm{\sigma} \in C^\infty(\R^d; \R^{d \times d})$ is nonsingular. $\mb{B}_t$ is the $d$-dimensional standard Brownian motion, and $\mb{q}_{t=0} = \mb{q_0}$ is a random variable independent of $\mb{B}_t$. 

To ensure that \eqref{eq:OLE} admits a unique strong solution, we assume that coefficients $\mb{b}(\mb{q})$ and $\bm{\sigma}(\mb{q})$ satisfy certain conditions (see \eqref{eq:llc} and \eqref{eq:gc} below).

The corresponding Fokker-Planck equation and the Fokker-Planck operator $\mathcal{L}^*$ are given by 
\begin{align} \label{eq:FPOLE}
    \frac{\partial \rho(\mb{q}, t)}{\partial t} = (\mathcal{L}^* \rho)(\mb{q}, t), \quad \mathcal{L}^* \rho := -\nabla \cdot (\rho \mb{b}) + \frac{1}{2} \nabla^2 : (\rho \bm{\sigma} \bm{\sigma}^T).
\end{align}
The corresponding Kolmogorov backward equation is 
\begin{align} \label{eq:BOLE}
    \frac{\partial f(\mb{q}, t)}{\partial t} = (\mathcal{L} f)(\mb{q}, t), \quad \text{with generator } \mathcal{L} f := \mb{b} \cdot \nabla f + \frac{1}{2} (\bm{\sigma} \bm{\sigma}^T) : \nabla^2 f.
\end{align}
Define the probability flux $\mb{j}$ as 
\[
    \mb{j}(\rho) := -\frac{1}{2} \nabla \cdot (\rho \bm{\sigma} \bm{\sigma}^T) + \rho \mb{b}.
\]
Then the Fokker-Planck equation can be written as the continuity equation 
\begin{align}
    \frac{\partial \rho}{\partial t} + \nabla \cdot \mb{j} = 0.
\end{align}

According to \eqref{def:reversibility}, we have the following equivalent definition of reversibility.
\begin{lemma} \label{lmm:rev}
    Consider \eqref{eq:OLE} with initial density function $\rho_0(\mb{q})$. Then the system is reversible if and only if, for any $\vp_1, \vp_2 \in C_b^\infty(\R^d)$
    \begin{align} \label{eq:revs}
        \E[\vp_1(\mb{q}_t) \vp_2(\mb{q}_0) \mid \mb{q}_0 \sim \rho_0] = \E[\vp_1(\mb{q}_0) \vp_2(\mb{q}_t) \mid \mb{q}_0 \sim \rho_0].
    \end{align} 
\end{lemma}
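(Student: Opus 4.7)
The plan is to establish the equivalence by exploiting the Markov structure of \eqref{eq:OLE} together with the observation that, in the overdamped case, the position $\mb{q}$ is an even variable, so the time-reversed process reduces to $\mb{q}^*_s = \mb{q}_{T-s}$.

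For the forward implication, I would specialize Definition \ref{def:reversibility} to $n=2$, $t_1 = 0$, $t_2 = t$, and $T = t$, which immediately yields $(\mb{q}_0, \mb{q}_t) \stackrel{d}{=} (\mb{q}_t, \mb{q}_0)$; integrating the product $\vp_1(\mb{q}_t)\vp_2(\mb{q}_0)$ against this joint law then gives \eqref{eq:revs}.

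For the converse, I would proceed in three steps. First, taking $\vp_2 \equiv 1$ in \eqref{eq:revs} forces $\E\vp_1(\mb{q}_t) = \E\vp_1(\mb{q}_0)$ for all $t \geq 0$, so $\rho_0$ is invariant and the Markov process starting from $\rho_0$ is stationary. Second, letting $P_t(\mb{q}, \dd\mb{q}')$ denote the transition kernel of \eqref{eq:OLE} and relabeling dummy variables on the right of \eqref{eq:revs}, I would rewrite that identity as
\begin{align*}
    \iint \vp_1(\mb{q}')\vp_2(\mb{q})\,\rho_0(\dd\mb{q})\,P_t(\mb{q}, \dd\mb{q}') = \iint \vp_1(\mb{q}')\vp_2(\mb{q})\,\rho_0(\dd\mb{q}')\,P_t(\mb{q}', \dd\mb{q}),
\end{align*}
and, since $C_b^\infty(\R^d)\otimes C_b^\infty(\R^d)$ is measure-determining on $\R^{2d}$, deduce the detailed balance identity $\rho_0(\dd\mb{q})\,P_t(\mb{q}, \dd\mb{q}') = \rho_0(\dd\mb{q}')\,P_t(\mb{q}', \dd\mb{q})$ for every $t \geq 0$. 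Third, for any $0 \leq t_1 < \cdots < t_n \leq T$, I would write the joint law of $(\mb{q}_{t_1}, \ldots, \mb{q}_{t_n})$ via the Markov property as
\begin{align*}
    \rho_0(\dd\mb{q}_1)\,\prod_{j=1}^{n-1} P_{t_{j+1}-t_j}(\mb{q}_j, \dd\mb{q}_{j+1}),
\end{align*}
and apply the detailed balance identity $n-1$ times to shift $\rho_0$ along the chain while flipping the arguments of each transition kernel, matching the Markov representation of the joint law of $(\mb{q}_{T-t_1}, \ldots, \mb{q}_{T-t_n})$ and thereby establishing \eqref{def:reversibility}.

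The main obstacle will be Step 2, the passage from the test-function identity \eqref{eq:revs} to the measure-level detailed balance condition; this rests on the fact that tensor products of smooth bounded functions separate finite Borel measures on product spaces. Once detailed balance is in hand, the iterative combinatorics in Step 3 is routine index bookkeeping and the stationarity from Step 1 ensures that the Markov representation of both joint laws starts from $\rho_0$.
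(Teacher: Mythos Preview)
Your forward implication matches the paper's: both specialize the definition of reversibility to two times and use stationarity plus the Markov property to rewrite $\E[\vp_1(\mb{q}^*_t)\vp_2(\mb{q}^*_0)]$ as $\E[\vp_1(\mb{q}_0)\vp_2(\mb{q}_t)]$.

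For the converse, the paper is extremely terse: it simply asserts that one can ``apply smooth functions to approximate simple functions'' and invoke monotone convergence to conclude reversibility. Read literally, this only upgrades \eqref{eq:revs} from $C_b^\infty$ test functions to indicators, yielding the two-time case of \eqref{def:reversibility}; the passage to arbitrary $n$-dimensional marginals is left implicit. Your route is more explicit and more informative: you first extract stationarity (your Step~1), then pass to the measure-level detailed balance $\rho_0(\dd\mb{q})P_t(\mb{q},\dd\mb{q}') = \rho_0(\dd\mb{q}')P_t(\mb{q}',\dd\mb{q})$ (Step~2), and finally iterate along the Markov chain to recover the full $n$-point symmetry (Step~3). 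This is the standard argument that two-point time symmetry of a stationary Markov process propagates to all finite-dimensional distributions, and it fills in precisely what the paper omits. Both approaches are correct; yours is self-contained, while the paper relies on the reader supplying the Markov-chain bookkeeping.
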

(Proof of Lemma \ref{lmm:rev} can be found in Appendix \ref{sec:proof}.) We will use \eqref{eq:revs} to prove reversibility.

\begin{theorem}[Reversibility of the overdamped Langevin dynamics] \label{thm:rev1}
    Consider \eqref{eq:OLE} with initial probability density function $\rho_0(\mb{q}) > 0$. Suppose that $\bm{\sigma}$ is constant and nonsingular. Also, suppose that $\bm{\sigma}$ and $\mb{b}$ satisfy
    \begin{enumerate}[(i)]
        \item (Local Lipschitz continuity) For any $n \in \mathbb{Z}^+$, there exists $K_n > 0$ such that
        \begin{align} \label{eq:llc}
            |\mb{b(x)} - \mb{b(y)}| \leq K_n |\mb{x - y}|,
        \end{align}
        for any $\mb{x, y}$ such that $|\mb{x}|, |\mb{y}| \leq n$;
        \item (Monotone condition) There exists a constant $C > 0$ such that for any $\mb{x} \in \R^d$:
        \begin{align} \label{eq:gc}
            \mb{x}^T \mb{b}(\mb{x}) \leq C(1 + |\mb{x}|^2).
        \end{align}
    \end{enumerate}
    Then the following are equivalent
    \begin{enumerate}[(i)]
        \item (Reversibility) The stochastic process determined by \eqref{eq:OLE} is reversible in the sense of Definition \ref{def:reversibility};
        \item (Symmetry) For arbitrary $\varphi_1, \varphi_2 \in C_b^\infty(\R^d)$,
        \begin{align} \label{eq:rev}
            \int_{\R^d} (\mathcal{L} \varphi_1)(\mb{q}) \varphi_2(\mb{q}) \rho_0(\mb{q}) \dd \mb{q} = \int_{\R^d} (\mathcal{L} \varphi_2)(\mb{q}) \varphi_1(\mb{q}) \rho_0(\mb{q}) \dd \mb{q};
        \end{align}
        \item (Zero flux) The probability flux is zero for all $t \geq 0$, i.e., $\mb{j}(\rho(\cdot, t)) = 0$;
        \item (Potential condition) There exists $U: \R^d \to \R$ such that
        \begin{align}
            -\nabla U(\mb{q}) = 2 (\bm{\sigma} \bm{\sigma}^T)^{-1} \mb{b},
        \end{align}
        and $\rho_0(\mb{q}) = \frac{1}{Z} e^{-U(\mb{q})}$ with $\int_{\R^d} e^{-U(\mb{q})} \dd \mb{q} < \infty$.
    \end{enumerate}
\end{theorem}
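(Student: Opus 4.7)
The plan is to split the four-way equivalence into (i) $\Leftrightarrow$ (ii), proved via a semigroup argument built on Lemma~\ref{lmm:rev}, together with the cycle (ii) $\Rightarrow$ (iv) $\Rightarrow$ (iii) $\Rightarrow$ (ii), each step of which is essentially a direct manipulation of the Fokker--Planck operator. Assumptions \eqref{eq:llc}--\eqref{eq:gc} supply a unique strong solution to \eqref{eq:OLE} and a well-defined Markov semigroup $P_t \varphi(\mb{q}) = \E[\varphi(\mb{q}_t)\mid \mb{q}_0 = \mb{q}]$; since $\bm{\sigma}$ is constant and nonsingular, $\mathcal{L}$ is uniformly elliptic, so any invariant density is automatically smooth, a fact I will use freely.

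For (i) $\Leftrightarrow$ (ii), Lemma~\ref{lmm:rev} recasts reversibility as the two-time symmetry $\int (P_t \varphi_1)\varphi_2 \rho_0 \,\dd \mb{q} = \int \varphi_1 (P_t \varphi_2) \rho_0 \,\dd \mb{q}$ for all $\varphi_1, \varphi_2 \in C_b^\infty(\R^d)$ and all $t \geq 0$. Differentiating at $t = 0$ produces \eqref{eq:rev}; conversely, symmetry of $\mathcal{L}$ in $L^2(\rho_0)$ lifts to $P_t = e^{t\mathcal{L}}$ by standard functional calculus, giving back the two-time identity and hence reversibility.

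For the cycle in (ii)--(iv), I start with (ii) $\Rightarrow$ (iv) by computing $\int[(\mathcal{L}\varphi_1)\varphi_2 - \varphi_1(\mathcal{L}\varphi_2)]\rho_0\,\dd \mb{q}$ and integrating by parts: the second-order terms cancel because $\bm{\sigma}\bm{\sigma}^T$ is constant, and the remaining first-order expression vanishes for all $\varphi_1, \varphi_2$ only if $2\rho_0 \mb{b} = \bm{\sigma}\bm{\sigma}^T \nabla \rho_0$ pointwise. Thus $2(\bm{\sigma}\bm{\sigma}^T)^{-1}\mb{b} = \nabla \log \rho_0$ is a gradient, and $U := -\log(Z\rho_0)$ realizes (iv). For (iv) $\Rightarrow$ (iii), direct substitution into $\mathbf{j}(\rho_0) = -\tfrac{1}{2}\bm{\sigma}\bm{\sigma}^T \nabla \rho_0 + \rho_0 \mb{b}$ gives zero, so $\rho_0$ is invariant, $\rho(\cdot,t)\equiv\rho_0$, and the flux vanishes for all $t \geq 0$. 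For (iii) $\Rightarrow$ (ii), the identity $\mathbf{j}(\rho_0) = 0$ rewrites $\mathcal{L}\varphi = \tfrac{1}{2\rho_0} \nabla \cdot (\rho_0 \bm{\sigma}\bm{\sigma}^T \nabla \varphi)$, after which one integration by parts produces the manifestly symmetric Dirichlet form $-\tfrac{1}{2}\int \rho_0\, \nabla \varphi_1 \cdot \bm{\sigma}\bm{\sigma}^T \nabla \varphi_2 \,\dd \mb{q}$.

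The main technical obstacle is the step (ii) $\Rightarrow$ (iv): passing from the weak symmetry \eqref{eq:rev} to the pointwise identity $2\rho_0 \mb{b} = \bm{\sigma}\bm{\sigma}^T \nabla \rho_0$ requires $\rho_0$ to be regular enough that $\nabla \rho_0$ makes classical sense. This is where uniform ellipticity is used: positivity of $\rho_0$ together with the stationary equation $\mathcal{L}^* \rho_0 = 0$ (implicit in (ii)) forces smoothness by standard elliptic regularity. The integrations by parts on $\R^d$ with $C_b^\infty$ test functions are justified by integrability and decay of $\rho_0$ and, if needed, by truncation to compactly supported test functions followed by a density argument.
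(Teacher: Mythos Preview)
Your proposal is correct and matches the paper's approach: the same differentiation of Lemma~\ref{lmm:rev} for (i)$\Rightarrow$(ii), the same integration-by-parts computation for (ii)$\Rightarrow$(iv) (the paper isolates the fact that $\varphi_2\nabla\varphi_1-\varphi_1\nabla\varphi_2$ ranges over all test vector fields as a separate auxiliary lemma), and the same flux and Dirichlet-form identities for the remaining links. The only cosmetic difference is the cycle order---the paper closes via (iv)$\Rightarrow$(i) rather than (ii)$\Rightarrow$(i) directly, because lifting symmetry of $\mathcal{L}$ on test functions to symmetry of $P_t$ requires the $L^2(\rho_0)$ semigroup framework (Hille--Yosida), which is most cleanly set up once $\rho_0=e^{-U}/Z$ is in hand; since your cycle already gives (ii)$\Rightarrow$(iv), this is available to you as well.
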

{\blue We point out that assumptions \eqref{eq:llc} and \eqref{eq:gc} ensure the well-posedness of the SDE, but they are irrelevant to the reversibility.
We adopt \eqref{eq:gc} instead of the general linear growth condition as it covers more cases. For instance, $\mb{b}(\mb{x}) = (1 - |\mb{x}|^2)\mb{x}$ does not satisfy the linear growth condition but does satisfy \eqref{eq:gc}.}

\subsection{Reversibility of  underdamped Langevin equation}

Now consider the underdamped Langevin equation 
\begin{equation} \label{eq:ULE}
    \left\{
    \begin{split}
        \dd\mb{q} &= \mb{p}\dd t,\\
        \dd\mb{p} &= -\dfrac{1}{2} \bm{\sigma\sigma}^T \mb{p} \dd t + \mb{b}(\mb{q}) \dd t + \frac{\bm{\sigma}}{\sqrt{\beta}} \dd\mb{B}.
    \end{split}
    \right.
\end{equation}
Here $\mb{p}, \mb{q} \in \R^d$, $\bm{\sigma} \in \R^{d \times d}$ is a nonsingular constant matrix, and $\beta = 1/(k_B T)$ is the thermodynamic beta. The corresponding Fokker-Planck equation (also known as the kinetic Fokker-Planck equation) and the Fokker-Planck operator $\mathcal{L}^*$ are 
\begin{align} \label{eq:FPULE}
    \dfrac{\partial \rho(\mb{q}, \mb{p}, t)}{\partial t} = (\mathcal{L}^* \rho)(\mb{q}, \mb{p}, t), \quad \mathcal{L}^* \rho := -\mb{p} \cdot \nabla_{\mb{q}} \rho - \mb{b} \cdot \nabla_{\mb{p}} \rho + \dfrac{1}{2} \nabla_{\mb{p}} \cdot \left( \rho \bm{\sigma\sigma}^T \mb{p} + \frac{1}{\beta} \bm{\sigma\sigma}^T \nabla_{\mb{p}} \rho \right).
\end{align}
The corresponding Kolmogorov backward equation is 
\begin{align} \label{eq:BULE}
    \dfrac{\partial f(\mb{q}, \mb{p}, t)}{\partial t} = (\mathcal{L} f)(\mb{q}, \mb{p}, t), \quad \mathcal{L} f := \mb{p} \cdot \nabla_{\mb{q}} f + \mb{b} \cdot \nabla_{\mb{p}} f - \dfrac{1}{2} (\bm{\sigma\sigma}^T \mb{p}) \cdot \nabla_{\mb{p}} f + \dfrac{1}{2\beta} (\bm{\sigma\sigma}^T) : \nabla_{\mb{p}}^2 f.
\end{align}
We say that \eqref{eq:ULE} satisfies the fluctuation-dissipation theorem {\blue because the dissipative part corresponding to Ornstein–Uhlenbeck process in $\mathcal{L}^*$} can be rewritten as 
\begin{align*}
    \dfrac{1}{2} \nabla_{\mb{p}} \cdot \left( \rho \bm{\sigma\sigma}^T \mb{p} + \frac{1}{\beta} \bm{\sigma\sigma}^T \nabla_{\mb{p}} \rho \right)
    = \dfrac{1}{2} \nabla_{\mb{p}} \cdot \left( \bm{\sigma\sigma}^T \left( \rho \nabla_{\mb{p}} \left( \dfrac{\|\mb{p}\|^2}{2} + \dfrac{\log \rho}{\beta} \right) \right) \right).
\end{align*}

As with the overdamped Langevin equation, we consider an equivalent condition for reversibility using test functions. {\blue Recall that momentum $\mb{p}$ is an odd variable, so the equivalent characterization below also uses parity-reversed test functions  $\widetilde{\varphi}_2(\mb{p}) = \varphi_2(-\mb{p})$ and $\widetilde{\psi}_2(\mb{p}) = \psi_2(-\mb{p})$.}

\begin{lemma} \label{lmm:rev2}
    Consider \eqref{eq:ULE} with the initial density function $\rho_0(\mb{q})$. Then it is reversible if and only if, for any $\vp_1, \vp_2 \in C_0^\infty(\R^d)$,
    \begin{equation} \label{eq:rev2}
        \begin{aligned} 
            &\E\left[\vp_1(\mb{q}(0)) \vp_2(\mb{p}(0)) \psi_1(\mb{q}(t)) \psi_2(\mb{p}(t)) \,\big|\, (\mb{q}(0), \mb{p}(0)) \sim \rho_0 \right] \\
            &\quad = \E\left[\vp_1(\mb{q}(t)) \widetilde{\vp_2}(\mb{p}(t)) \psi_1(\mb{q}(0)) \widetilde{\psi_2}(\mb{p}(0)) \,\big|\, (\mb{q}(0), \mb{p}(0)) \sim \rho_0 \right].
        \end{aligned}
    \end{equation}
\end{lemma}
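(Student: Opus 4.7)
The plan is to follow the blueprint used for Lemma \ref{lmm:rev} in the overdamped case, but with careful attention to the odd parity of $\mb{p}$. The two directions are asymmetric in difficulty.

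For necessity, one specializes Definition \ref{def:reversibility} to $n=2$, $T=t$, and time points $t_1=0$, $t_2=t$. The equality of finite-dimensional distributions gives equality of the joint laws of $(X_0, X_t)$ and $(X^*_0, X^*_t) = ((\mb{q}(t), -\mb{p}(t)), (\mb{q}(0), -\mb{p}(0)))$. Integrating against the product observable $\vp_1(\mb{q})\vp_2(\mb{p})$ at time $0$ and $\psi_1(\mb{q})\psi_2(\mb{p})$ at time $t$, and moving the $\mb{p} \mapsto -\mb{p}$ flip onto the test functions (producing $\widetilde{\vp_2}$ and $\widetilde{\psi_2}$), yields \eqref{eq:rev2} directly.

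For sufficiency, I would first recast \eqref{eq:rev2} as an operator identity for the Markov semigroup $P_t$ associated with \eqref{eq:BULE}. Writing $R$ for the parity involution $(\mb{q},\mb{p}) \mapsto (\mb{q},-\mb{p})$ and $\widetilde{g} := g \circ R$, the identity reads
\[
\int g \, (P_t h) \, \rho_0 \dd\mb{q}\dd\mb{p} = \int \widetilde{h} \, (P_t \widetilde{g}) \, \rho_0 \dd\mb{q}\dd\mb{p}
\]
for all tensor-product, and hence by density for all bounded smooth, observables $g$ and $h$. Equivalently, the $L^2(\rho_0)$-adjoint satisfies $P_t^* = R \, P_t \, R$, which is the natural parity-corrected analogue of the symmetry condition \eqref{eq:rev}. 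Taking $h \equiv 1$ together with the fact that the physical $\rho_0$ is even in $\mb{p}$ recovers stationarity. Next I would lift this two-time identity to an $n$-time statement by induction on $n$: the $n$-point correlation factors via the Markov property as
\[
\E\Big[\prod_{j=1}^n f_j(X(t_j))\Big] = \int f_1 \cdot P_{t_2-t_1}\!\Big(f_2 \cdot P_{t_3-t_2}(\cdots )\Big) \, \rho_0 \dd\mb{q}\dd\mb{p},
\]
and one peels the outermost time using $P_s^* = R P_s R$, absorbing the flip into the test function at $t_n$ before applying the inductive hypothesis to the remaining $(n-1)$-point expression. The output matches the joint distribution of $X^*_{t_j} = R X(T-t_j)$, and a final indicator-approximation argument converts the smooth-function identity into the Borel-set form of Definition \ref{def:reversibility}.

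The main obstacle will be cleanly tracking the parity flips through the iteration: each invocation of $P_s^* = R P_s R$ inserts a pair of $R$'s, and one must verify that the net count at each sample point is exactly one flip, so that the final $n$-point expression reproduces the law of $X^*$ and not some higher-order sign artifact. A cleaner alternative that sidesteps this global bookkeeping is to freeze all intermediate times $t_2, \ldots, t_{n-1}$ into a single conditional expectation depending only on $X(0)$ and $X(t_n)$ via the Markov property, then apply \eqref{eq:rev2} directly to the resulting two-time observable.
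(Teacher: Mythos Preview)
Your necessity argument matches the paper's: specialize Definition~\ref{def:reversibility} to two times, use stationarity to shift the time window, and absorb the parity flip into the test functions. For sufficiency the paper is far terser than you---it simply says to approximate indicators by smooth functions and invoke monotone convergence, leaving the passage from two-time to $n$-time correlations implicit in the Markov property---so your operator-identity-plus-induction sketch (and the cleaner alternative of collapsing intermediate times into a single two-time observable) actually supplies detail the paper omits.

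One small point: when recovering stationarity you invoke ``the fact that the physical $\rho_0$ is even in $\mb p$,'' but evenness is not a hypothesis of the lemma and must itself be extracted from \eqref{eq:rev2}. Taking (an approximation of) $h\equiv 1$ in your displayed identity gives $\int g\,\rho_0 = \int (P_t\widetilde g)\,\rho_0 = \int \widetilde g\,\rho_t$, hence $\rho_t = \widetilde{\rho_0}$ for all $t>0$; letting $t\to 0^+$ yields $\rho_0=\widetilde{\rho_0}$, and then $\rho_t=\rho_0$ follows. The paper does not spell this step out either.
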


The proof of Lemma \ref{lmm:rev2} is given in Appendix \ref{sec:proof}.

Next, we provide the equivalence theorem for the reversibility of the underdamped Langevin dynamics.

\begin{theorem} \label{thm:revULE}
    (Reversibility of the underdamped Langevin) Consider \eqref{eq:ULE} with the initial probability density function $\rho_0(\mb{q}, \mb{p}) > 0$. Suppose that $\bm{\sigma}$ is a constant and nonsingular matrix, and that $\bm{\sigma}$ and $\mb{b}$ satisfy \eqref{eq:llc} and \eqref{eq:gc}. Then the following are equivalent 
    \begin{enumerate}[(i)]
        \item (Reversibility) The stochastic process determined by \eqref{eq:ULE} is reversible.
        \item (Symmetry) $\rho_0(\mb{q}, \mb{p}) = \rho_0(\mb{q}, -\mb{p})$ and, for arbitrary $\varphi_1(\mb{q}), \varphi_2(\mb{p}), \psi_1(\mb{q}), \psi_2(\mb{p}) \in C_0^\infty(\R^d)$ 
            \begin{align}\label{eq:thm2_ii}
                \iint \varphi_1(\mb{q}) \varphi_2(\mb{p}) \mathcal{L} \bbs{\psi_1(\mb{q}) \psi_2(\mb{p})} \rho_0(\mb{q}, \mb{p}) \, \dd\mb{q} \, \ud\mb{p} 
                = \iint \mathcal{L} \bbs{\varphi_1(\mb{q}) \widetilde{\varphi}_2(\mb{p})} \psi_1(\mb{q}) \widetilde{\psi}_2(\mb{p}) \rho_0(\mb{q}, \mb{p}) \, \dd\mb{q} \, \ud\mb{p},
            \end{align}
            where $\widetilde{\varphi}_2(\mb{p}) = \varphi_2(-\mb{p})$ and $\widetilde{\psi}_2(\mb{p}) = \psi_2(-\mb{p})$.
        \item (Potential condition) There exists $U: \R^d \to \R$ such that 
            \begin{align}
                -\nabla_{\mb{q}} U(\mb{q}) = \mb{b},
            \end{align}
            and
            \begin{equation} \label{eq:req}
                \rho_0(\mb{q}, \mb{p}) = \dfrac{1}{Z} e^{-\beta H(\mb{q}, \mb{p})}, \quad Z = \iint e^{-\beta H(\mb{q}, \mb{p})} \dqp < \infty,
            \end{equation}
            where 
            \begin{align}
                H(\mb{q}, \mb{p}) := \dfrac{|\mb{p}|^2}{2} + U(\mb{q}).
            \end{align}
        \item (Evenness in $\mb{p}$ variable) $\rho_0(\mb{q}, \mb{p}) = \rho_0(\mb{q}, -\mb{p})$ and $\rho_0$ is stationary, i.e., it solves $\mathcal{L}^* \rho_0 = 0$.
        \item (Separation of variables) $\rho_0(\mb{q}, \mb{p}) = U_1(\mb{q}) U_2(\mb{p})$, where $U_1 > 0$ and $U_2 > 0$, and $\rho_0$ is stationary, i.e., it solves $\mathcal{L}^* \rho_0 = 0$.
    \end{enumerate}
\end{theorem}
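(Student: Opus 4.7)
My plan is to close the equivalence in a hub-and-spoke pattern around the Gibbs form (iii): prove (i) $\Leftrightarrow$ (ii) via Lemma \ref{lmm:rev2}, show (ii) $\Rightarrow$ (iv) by a test-function trick, reduce (iv) $\Rightarrow$ (iii) by a momentum-parity splitting of $\mathcal{L}^*$, and finally derive (iii) $\Rightarrow$ (ii), (iv), (v) as well as (v) $\Rightarrow$ (iv) by direct computation. The overall skeleton mirrors the overdamped proof of Theorem \ref{thm:rev1}, with the new ingredient being the parity operator $(Pf)(\mb{q},\mb{p}) := f(\mb{q},-\mb{p})$ that reflects the odd parity of momentum.

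For (i) $\Leftrightarrow$ (ii), Lemma \ref{lmm:rev2} recasts reversibility as the identity \eqref{eq:rev2}; setting $t=0$ and taking $\varphi_1\psi_1\equiv 1$ extracts $\rho_0(\mb{q},\mb{p}) = \rho_0(\mb{q},-\mb{p})$, while differentiating \eqref{eq:rev2} at $t=0$ and invoking the Kolmogorov backward equation \eqref{eq:BULE} converts the probabilistic identity into the generator symmetry \eqref{eq:thm2_ii}; the reverse direction uses the semigroup expansion to propagate the infinitesimal relation to all finite times. For (iii) $\Rightarrow$ every other condition, direct verification suffices: $\rho_0 = Z^{-1}e^{-\beta H}$ with $H = |\mb{p}|^2/2 + U(\mb{q})$ is manifestly even in $\mb{p}$ and factors as $(Z_1^{-1}e^{-\beta U(\mb{q})})(Z_2^{-1}e^{-\beta|\mb{p}|^2/2})$; stationarity $\mathcal{L}^*\rho_0 = 0$ follows from $\mb{b} = -\nabla U$ together with the fluctuation-dissipation structure of $\mathcal{L}^*$; and \eqref{eq:thm2_ii} follows by integration by parts, where the transport part $-\mb{p}\cdot\nabla_{\mb{q}} - \mb{b}\cdot\nabla_{\mb{p}}$ acquires a sign under $\mb{p}\mapsto -\mb{p}$ that is absorbed by the $\widetilde{\varphi}_2,\widetilde{\psi}_2$ substitution, while the Ornstein--Uhlenbeck dissipative part is self-adjoint on $L^2(\rho_0\,\dqp)$. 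The implication (ii) $\Rightarrow$ (iv) is obtained by setting $\psi_1 = \psi_2 \equiv 1$ in \eqref{eq:thm2_ii}: the right-hand side vanishes since $\mathcal{L}(1)=0$, which after integration by parts gives $\iint \varphi_1\widetilde{\varphi}_2\,\mathcal{L}^*\rho_0\,\dqp = 0$ for all admissible test functions, hence $\mathcal{L}^*\rho_0 = 0$.

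The conceptual heart is (iv) $\Rightarrow$ (iii), which I plan to handle via the parity splitting $\mathcal{L}^* = \mathcal{L}_H^* + \mathcal{L}_D^*$ with $\mathcal{L}_H^* f := -\mb{p}\cdot\nabla_{\mb{q}}f - \mb{b}\cdot\nabla_{\mb{p}}f$ and $\mathcal{L}_D^* f := \tfrac{1}{2}\nabla_{\mb{p}}\cdot\bigl(f\bm{\sigma\sigma}^T\mb{p} + \tfrac{1}{\beta}\bm{\sigma\sigma}^T\nabla_{\mb{p}}f\bigr)$. A short computation yields $P\mathcal{L}_H^* P = -\mathcal{L}_H^*$ and $P\mathcal{L}_D^* P = \mathcal{L}_D^*$, so the evenness assumption $P\rho_0 = \rho_0$ combined with $\mathcal{L}^*\rho_0 = 0$ forces $\mathcal{L}_H^*\rho_0 = 0$ and $\mathcal{L}_D^*\rho_0 = 0$ separately (by taking the symmetric and antisymmetric parts under $P$). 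Writing $\rho_0 = e^{-\beta|\mb{p}|^2/2}\widetilde{\rho}$, the dissipative equation becomes $\nabla_{\mb{p}}\cdot\bigl(\bm{\sigma\sigma}^T e^{-\beta|\mb{p}|^2/2}\nabla_{\mb{p}}\widetilde{\rho}\bigr) = 0$; multiplying by $\widetilde{\rho}$ and integrating in $\mb{p}$ (using positive definiteness of $\bm{\sigma\sigma}^T$ and decay of $\widetilde{\rho}$ inherited from integrability of $\rho_0$) forces $\nabla_{\mb{p}}\widetilde{\rho} \equiv 0$, so $\widetilde{\rho} = \widetilde{\rho}(\mb{q})$. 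Then $\mathcal{L}_H^*\rho_0 = 0$ collapses to $\mb{p}\cdot\bigl(\nabla_{\mb{q}}\log\widetilde{\rho} - \beta \mb{b}(\mb{q})\bigr) = 0$ for all $\mb{p}$, and curl-freeness of $\nabla_{\mb{q}}\log\widetilde{\rho}$ forces $\mb{b} = -\nabla U$ and $\widetilde{\rho} = Z^{-1}e^{-\beta U(\mb{q})}$, giving (iii).

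For the remaining implication (v) $\Rightarrow$ (iv), my plan is to divide $\mathcal{L}^*(U_1 U_2) = 0$ by $U_1 U_2 > 0$ and separate the resulting identity according to its $(\mb{q},\mb{p})$-dependencies. Matching $\mb{q}$-dependent and $\mb{q}$-independent parts and then taking second $\mb{p}$-derivatives forces $\nabla_{\mb{p}}^2\log U_2$ to be a constant matrix, so $\log U_2$ is a quadratic polynomial in $\mb{p}$ of the form $-\tfrac{1}{2}\mb{p}^T A\mb{p} + B\cdot\mb{p} + C$. The cross-constraint $(\mb{b}(\mb{q}) - \mb{b}(\mb{q}_0))\cdot B = 0$ for every $\mb{q}$, combined with the nondegeneracy of the drift $\mb{b}$, then eliminates $B$ and delivers evenness of $U_2$ in $\mb{p}$. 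I expect this step to be the main obstacle: extracting the Gibbs structure from a single scalar PDE that couples $\mb{q}$ and $\mb{p}$ through $\mb{b}(\mb{q})$ and $\bm{\sigma\sigma}^T\mb{p}$ requires careful matching of polynomial coefficients in $\mb{p}$, and ruling out the parity-breaking linear term $B\cdot\mb{p}$ relies on a genericity or spanning argument on the range of $\mb{b}$; the splitting argument in (iv) $\Rightarrow$ (iii) is conceptually cleaner because the momentum-parity symmetry is built in from the start.
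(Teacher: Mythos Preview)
Your overall skeleton is sound and lands close to the paper's own proof, but there are three places where your route diverges from the paper's and where your sketch is too optimistic.

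\textbf{On $(ii)\Rightarrow(i)$.} You propose to ``propagate the infinitesimal relation to all finite times'' by a semigroup expansion. The paper does \emph{not} attempt this directly: it proves $(ii)\Rightarrow(iii)$ by an explicit integration-by-parts computation, and only then uses the Gibbs form $\rho_0=Z^{-1}e^{-\beta H}$ to set up the weighted space $L^2(\rho_0\,\dd\mb q\dd\mb p)$, show that $-\mathcal L$ is maximal monotone there (Lemma~\ref{Lmm}), and deduce the dual semigroup identity $\widetilde{e^{t\mathcal L}}=e^{t\widetilde{\mathcal L}}$ needed for $(i)$. Your direct route would need exactly this semigroup machinery, but without $(iii)$ you do not yet know which weighted space to work in or that the semigroup is well-defined on it. So either follow the paper's detour through $(iii)$, or make explicit that your ``semigroup expansion'' already presupposes $(ii)\Rightarrow(iv)\Rightarrow(iii)$. (Also, in your $(ii)\Rightarrow(iv)$ step it is the \emph{left}-hand side of \eqref{eq:thm2_ii} that vanishes when $\psi_1=\psi_2\equiv 1$, and $1\notin C_0^\infty$ requires a cutoff approximation; the paper makes the same move in Step~1.)

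\textbf{On $(iv)\Rightarrow(iii)$.} Your parity splitting $\mathcal L^*=\mathcal L_H^*+\mathcal L_D^*$ and the conclusion that both pieces annihilate $\rho_0$ separately is exactly what the paper does. For the dissipative equation $\mathcal L_D^*\rho_0=0$ in the $\mb p$-variable, however, the paper does \emph{not} run your energy argument (multiply by $\widetilde\rho$ and integrate in $\mb p$); instead it invokes the uniqueness statement of Lemma~\ref{prop:uniqueness} for positive $L^1$ solutions of the stationary Fokker--Planck equation, which immediately gives $\rho_0=c(\mb q)e^{-\beta|\mb p|^2/2}$. Your energy argument would require justifying the vanishing of the boundary terms in $\mb p$ for $\widetilde\rho=\rho_0\,e^{\beta|\mb p|^2/2}$, and integrability of $\rho_0$ alone does not obviously supply the needed decay of $\widetilde\rho$ and $\nabla_{\mb p}\widetilde\rho$. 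The paper's route sidesteps this.

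\textbf{On $(v)$.} The paper proves $(v)\Rightarrow(iii)$ directly rather than $(v)\Rightarrow(iv)$. More importantly, your proposed mechanism for killing the linear term $B\cdot\mb p$ in $\log U_2$---a ``nondegeneracy or spanning argument on the range of $\mb b$''---is not available, since no such hypothesis is made. The paper instead matches \emph{all} polynomial coefficients in $\mb p$ after substituting $\log U_2=\mb p^T A\mb p+\mb a\cdot\mb p+c$ into $\mathcal L^*\rho_0=0$: the quadratic-in-$\mb p$ terms together with the trace condition force $A=-\tfrac{\beta}{2}I$, and then the constant-in-$\mb p$ terms reduce to $(\nabla_{\mb q}\log U_1)\cdot\mb a=0$ for all $\mb q$, which is incompatible with $U_1\in L^1(\R^d)$ unless $\mb a=0$. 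So the obstruction you flagged is real, but it is integrability of $U_1$, not genericity of $\mb b$, that resolves it.
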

We remark here that the conclusions of Theorem \ref{thm:rev1} and Theorem \ref{thm:revULE} also hold for non-constant $\bm{\sigma}$. However, in this case, the SDE should be written in the backward Itô's integral sense \cite{kunitha1982backward}. For the overdamped Langevin equation 
\begin{align*}
    \dd \mb{q} = \mb{b}(\mb{q}) \dd t + \bm{\sigma}(\mb{q}) \hat{\dd} \mb{B},
\end{align*}
the Fokker-Planck equation is given by 
\begin{align*}
    \dfrac{\p \rho(\mb{q}, t)}{\p t} = (\mathcal{L}^* \rho)(\mb{q}, t), \quad \mathcal{L}^* \rho := \nabla \cdot \left( \dfrac{1}{2} \bm{\sigma} \bm{\sigma}^T \nabla \rho - \rho \mb{b} \right).
\end{align*}

For the underdamped Langevin equation, the SDE should be written as 
\begin{equation*}
    \left\{
    \begin{split}
        \dd \mb{q} &= \mb{p} \dd t, \\
        \dd \mb{p} &= -\dfrac{1}{2} \bm{\sigma \sigma}^T \mb{p} \dd t + \mb{b}(\mb{q}) \dd t + \frac{\bm{\sigma}}{\sqrt{\beta}} \hat{\dd} \mb{B},
    \end{split}
    \right.
\end{equation*}
with the Fokker-Planck equation 
\begin{align*}
    \dfrac{\p \rho(\mb{q}, \mb{p}, t)}{\p t} = (\mathcal{L}^* \rho)(\mb{q}, \mb{p}, t), \quad \mathcal{L}^* \rho := -\mb{p} \cdot \nabla_{\mb{q}} \rho - \mb{b} \cdot \nabla_{\mb{p}} \rho + \dfrac{1}{2} \nabla_{\mb{p}} \cdot \left( \rho \bm{\sigma\sigma}^T \mb{p} + \frac{1}{\beta} \bm{\sigma\sigma}^T \nabla_{\mb{p}} \rho \right).
\end{align*}

\subsection{The reversibility of generalized Langevin dynamics with memory}

Let $n \geq 1$ be a fixed integer. Now consider the following generalized Langevin equation with memory 
\begin{equation} \label{eq:GLE}
    \left\{
    \begin{split}
        \ddot{\mb{q}}(t) &= \mb{b}(\mb{q}) - \sum_{i=1}^n \mb{A}_i \mb{A}_i^T \int_0^t e^{-\alpha_i (t-s)} \dot{\mb{q}}(s) \dd s + \mb{A}_i \mb{f}_i(t), \\
        \dd \mb{f}_i(t) &= -\alpha_i \mb{f}_i \dd t + \sqrt{2 \beta^{-1} \alpha_i} \dd \mb{B}_i, \quad i = 1, 2, \dots, n.
    \end{split}
    \right.
\end{equation}
Here, for each $i = 1, 2, \dots, n$, $\mb{A}_i \in \R^{d \times d}$ is a constant matrix, $\alpha_i > 0$ is a constant, and $\mb{B}_i(t) \in \R^d$ is a standard Brownian motion. Additionally, $\mb{B}_i(t)$ for $i = 1, 2, \dots, n$ are independent. Reversibility can also be considered for \eqref{eq:GLE} if we reformulate it.

Let $\mb{z}_i(t) = -\mb{A}_i^T \int_0^t e^{-\alpha_i (t-s)} \dot{\mb{q}}(s) \dd s + \mb{f}_i(t)$ and $\dot{\mb{q}} = \mb{p}$. Then \eqref{eq:GLE} can be reformulated as 
\begin{equation} \label{eq:RGLE}
    \left\{
    \begin{split}
        \dd \mb{q} &= \mb{p} \dd t, \\
        \dd \mb{p} &= \left( \mb{b}(\mb{q}) + \sum_{i=1}^n \mb{A}_i \mb{z}_i \right) \dd t, \\
        \dd \mb{z}_i &= -(\alpha_i \mb{z}_i + \mb{A}_i^T \mb{p}) \dd t + \sqrt{2 \beta^{-1} \alpha_i} \dd \mb{B}_i, \quad i = 1, 2, \dots, n.
    \end{split}
    \right.
\end{equation}
We will consider \eqref{eq:RGLE} instead of \eqref{eq:GLE} for reversibility. The corresponding Fokker-Planck equation for \eqref{eq:RGLE} is given by 
\begin{equation} \label{eq:FGLE}
    \begin{aligned}
        \dfrac{\p \rho}{\p t} = \mathcal{L}^* \rho, \quad \mathcal{L}^* \rho := &-\mb{p} \cdot \nabla_{\mb{q}} \rho - \mb{b}(\mb{q}) \cdot \nabla_{\mb{p}} \rho - \sum_{i=1}^n \left( (\mb{A}_i \mb{z}_i) \cdot \nabla_{\mb{p}} \rho - \mb{A}_i^T \mb{p} \cdot \nabla_{\mb{z}_i} \rho \right) \\
        &+ \sum_{i=1}^n \alpha_i \nabla_{\mb{z}_i} \cdot \left( \mb{z}_i \rho + \dfrac{1}{\beta} \nabla_{\mb{z}_i} \rho \right).
    \end{aligned}
\end{equation}
The corresponding backward equation is given by 
\begin{equation} \label{eq:BGLE}
    \begin{aligned}
        \dfrac{\p f}{\p t} = \mathcal{L} f, \quad \mathcal{L} f := &\mb{p} \cdot \nabla_{\mb{q}} f + \mb{b}(\mb{q}) \cdot \nabla_{\mb{p}} f + \sum_{i=1}^n \left( (\mb{A}_i \mb{z}_i) \cdot \nabla_{\mb{p}} f - \mb{A}_i^T \mb{p} \cdot \nabla_{\mb{z}_i} f \right) \\
        &+ \sum_{i=1}^n \alpha_i \left( -\mb{z}_i \cdot \nabla_{\mb{z}_i} f + \dfrac{1}{\beta} \Delta_{\mb{z}_i} f \right).
    \end{aligned}
\end{equation}

We now state the equivalent theorem characterizing the reversibility of \eqref{eq:RGLE} in the case {\blue where $n = 1$ and $\mb{A}_1 = \mb{I} \in \R^{d \times d}$, i.e., 
\begin{equation} \label{eq:1GLE} 
\left\{\begin{split}
\dd{\mb{q}} &= \mb{p} \dd t, \\
\dd{\mb{p}} &= \left( -\nabla_{\mb{q}} V(\mb{q}) + \mb{z} \right) \dd t, \\
\dd{\mb{z}} &= -(\alpha \mb{z} + \mb{p}) \dd t + \sqrt{2 \beta^{-1} \alpha} \dd \mb{B}.
\end{split}\right.
\end{equation}
}
The proof is given in Appendix \ref{sec:proof}.

\begin{theorem} \label{thm:moR}
    (Reversibility of the generalized Langevin with memory) Consider \eqref{eq:RGLE} with initial probability density function $\rho_0(\mb{q}, \mb{p}, \mb{z}) > 0$ satisfying $\rho_0 > 0$. Suppose that $\bm{\sigma}$ is a constant and nonsingular matrix. Then the following are equivalent
    \begin{enumerate}[(i)]
        \item (Reversibility) The stochastic process determined by \eqref{eq:RGLE} is reversible.
        \item (Symmetry) $\rho_0(\mb{q}, \mb{p}, \mb{z}) = \rho_0(\mb{q}, -\mb{p}, \mb{z})$, and for arbitrary $\varphi_1(\mb{q}), \varphi_2(\mb{p}), \varphi_3(\mb{z}), \psi_1(\mb{q}), \psi_2(\mb{p}), \psi_3(\mb{z}) \in C_0^\infty(\R^d)$
        \begin{equation} \label{symLm}
            \begin{aligned}
                &\iiint \varphi_1(\mb{q}) \varphi_2(\mb{p}) \varphi_3(\mb{z}) \mathcal{L} \left( \psi_1(\mb{q}) \psi_2(\mb{p}) \psi_3(\mb{z}) \right) \rho_0(\mb{q}, \mb{p}, \mb{z}) \dqpz \\
                &= \iiint \mathcal{L} \left( \varphi_1(\mb{q}) \widetilde{\varphi}_2(\mb{p}) \varphi_3(\mb{z}) \right) \psi_1(\mb{q}) \widetilde{\psi}_2(\mb{p}) \psi_3(\mb{z}) \rho_0(\mb{q}, \mb{p}, \mb{z}) \dqpz,
            \end{aligned}
        \end{equation}
        where $\widetilde{\varphi}_2(\mb{p}) = \varphi_2(-\mb{p})$ and $\widetilde{\psi}_2(\mb{p}) = \psi_2(-\mb{p})$.
        \item (Potential condition) There exists $U: \R^d \to \R$ such that
        \begin{align}
            -\nabla_{\mb{q}} U(\mb{q}) = \mb{b},
        \end{align}
        and
        \begin{equation}
            \rho_0(\mb{q}, \mb{p}, \mb{z}) = \dfrac{1}{Z} e^{-\beta H(\mb{q}, \mb{p}, \mb{z})}, \quad Z = \iiint e^{-\beta H(\mb{q}, \mb{p}, \mb{z})} \dqpz < \infty,
        \end{equation}
        where
        \begin{align}
            H(\mb{q}, \mb{p}, \mb{z}) := U(\mb{q}) + \dfrac{|\mb{p}|^2}{2} + \dfrac{|\mb{z}|^2}{2}.
        \end{align}
        \item (Evenness in $\mb{p}$ variable) $\rho_0(\mb{q}, \mb{p}, \mb{z}) = \rho_0(\mb{q}, -\mb{p}, \mb{z})$ and $\rho_0$ is stationary, i.e., it solves $\mathcal{L}^* \rho_0 = 0$.
        \item (Separation of variables) $\rho_0(\mb{q}, \mb{p}, \mb{z}) = U_1(\mb{q}) U_2(\mb{p}) U_3(\mb{z})$, where $U_1, U_2, U_3 > 0$, and $\rho_0$ is stationary, i.e., it solves $\mathcal{L}^* \rho_0 = 0$.
    \end{enumerate}
\end{theorem}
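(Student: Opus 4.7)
The plan is to prove the five conditions are equivalent via a cycle following the template of Theorem \ref{thm:revULE}, but enlarged to the augmented state $(\mb{q},\mb{p},\mb{z})$ with $\mb{z}$ treated as an even variable under time reversal (as encoded by the absence of a tilde on $\varphi_3,\psi_3$ in (\ref{symLm})). I would first establish (i) $\Leftrightarrow$ (ii) by reducing reversibility of the Markov process to the symmetry of the generator on product test functions, then prove (ii) $\Rightarrow$ (iii) by carefully choosing test functions to extract the Gibbs form, and close with (iii) $\Rightarrow$ (i) together with the short side cycle (iii) $\Rightarrow$ (v) $\Rightarrow$ (iv) $\Rightarrow$ (iii).

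For (i) $\Leftrightarrow$ (ii), I would first prove an analogue of Lemma \ref{lmm:rev2} adapted to the extended state: reversibility is equivalent to the two-point identity $\E[f(\mb{q}_0,\mb{p}_0,\mb{z}_0)\,g(\mb{q}_t,\mb{p}_t,\mb{z}_t)] = \E[f(\mb{q}_t,-\mb{p}_t,\mb{z}_t)\,g(\mb{q}_0,-\mb{p}_0,\mb{z}_0)]$ under $\rho_0$, for $f,g$ in a dense class such as tensor products $\varphi_1\varphi_2\varphi_3$. Differentiating at $t=0$ converts this into (\ref{symLm}), while the reverse passage uses the semigroup $e^{t\mathcal{L}}$ together with the fact that products are dense in $L^2(\rho_0)$.

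For (ii) $\Rightarrow$ (iii) I would expand $\mathcal{L}(\psi_1\psi_2\psi_3)$ from (\ref{eq:BGLE}) into its transport and Ornstein--Uhlenbeck pieces, integrate by parts against $\rho_0$, and then compare the result term-by-term with the RHS after the parity reversal on $\mb{p}$. Specializing to $\varphi_1,\psi_1,\varphi_3,\psi_3$ trivial and $\psi_2$ odd first forces $\rho_0(\mb{q},\mb{p},\mb{z})=\rho_0(\mb{q},-\mb{p},\mb{z})$. Next, with $\psi_3\equiv 1$ and arbitrary $\psi_1,\psi_2$, matching the $\mb{p}\cdot\nabla_q$ and $\mb{b}\cdot\nabla_p$ terms pins down $\nabla_p\log\rho_0 = -\beta\mb{p}$ and $\nabla_q\log\rho_0 = -\beta\nabla_q U$ with $\mb{b}=-\nabla_q U$. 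Finally, using $\psi_1\equiv 1$ with varying $\psi_2,\psi_3$ and matching the skew coupling $(\mb{z}\cdot\nabla_p-\mb{p}\cdot\nabla_z)$ together with the OU part forces $\nabla_z\log\rho_0=-\beta\mb{z}$, giving the Gibbs form in (iii).

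The remaining implications are cleaner. For (iii) $\Rightarrow$ (i), a direct computation shows $\mathcal{L}^*\rho_0=0$ and verifies (\ref{symLm}) by integration by parts against the explicit Gibbs measure, so reversibility follows from (ii) $\Rightarrow$ (i) already established. The side cycle is short: (iii) $\Rightarrow$ (v) is immediate from the factorization of $e^{-\beta H}$; (v) $\Rightarrow$ (iv) is trivial since a product with a Gaussian factor in $\mb{p}$ is even; and for (iv) $\Rightarrow$ (iii), I would split $\mathcal{L}^*\rho_0=0$ into its $\mb{p}$-even and $\mb{p}$-odd parts using $\rho_0(\mb{q},\mb{p},\mb{z})=\rho_0(\mb{q},-\mb{p},\mb{z})$, observe that the Hamiltonian transport part $\mb{p}\cdot\nabla_q\rho_0+(\mb{b}+\mb{z})\cdot\nabla_p\rho_0-\mb{p}\cdot\nabla_z\rho_0$ is $\mb{p}$-odd while the OU part is $\mb{p}$-even, and conclude that each must vanish, from which integration along characteristics recovers the Gibbs form. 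The main obstacle I anticipate is the computation in (ii) $\Rightarrow$ (iii): the skew-symmetric coupling $\mb{z}\cdot\nabla_p-\mb{p}\cdot\nabla_z$ mixes the $\mb{p}$- and $\mb{z}$-dependence of $\log\rho_0$ after parity reversal, so one must be judicious about the order of test function specializations in order to disentangle the contributions and isolate $\mb{b}=-\nabla_q U$ without losing information prematurely.
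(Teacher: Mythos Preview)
Your overall architecture matches the paper's proof closely: the same cycle (i) $\Rightarrow$ (ii) $\Rightarrow$ (iii) $\Rightarrow$ (i) plus the side implications involving (iv) and (v), with the (ii) $\Rightarrow$ (iii) step done by expanding $\mathcal{L}$, integrating by parts, and specializing test functions. Two points deserve correction.

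\textbf{The (v) $\Rightarrow$ (iv) step is not trivial.} You write that it is ``trivial since a product with a Gaussian factor in $\mb{p}$ is even,'' but condition (v) does \emph{not} assert that $U_2(\mb{p})$ is Gaussian---only that $\rho_0$ factorizes as $U_1(\mb{q})U_2(\mb{p})U_3(\mb{z})$ and is stationary. Evenness of $U_2$ is a conclusion, not a hypothesis. The paper does not route through (v) $\Rightarrow$ (iv) at all; instead it proves (v) $\Rightarrow$ (iii) directly by plugging the product ansatz into $\mathcal{L}^*\rho_0=0$, differencing in $\mb{q}$ to force $\log U_2$ and $\log U_3$ to be quadratic (as in \eqref{eq:help8}), and then extracting that the linear coefficients vanish and the Hessians equal $-\tfrac{\beta}{2}I$. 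This is straightforward but genuine work, and your plan as written skips it.

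\textbf{Minor issues in (ii) $\Rightarrow$ (iii) and (iv) $\Rightarrow$ (iii).} In your (ii) $\Rightarrow$ (iii) sketch, the first specialization (``forces $\rho_0(\mb{q},\mb{p},\mb{z})=\rho_0(\mb{q},-\mb{p},\mb{z})$'') is redundant since that evenness is already part of hypothesis (ii). The paper's order of specialization also differs: it first sets $\varphi_3=\psi_3$ to annihilate the $\mb{z}$-OU remainder on the RHS of the integrated identity (see \eqref{tm_RH}), immediately yielding the transport identity \eqref{tm_pbq}, and then invokes Lemma \ref{lmm:aux} to obtain $\tfrac{1}{\beta}\nabla_{\mb{z}}e^{-\beta H}+\mb{z}e^{-\beta H}=0$. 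Your order (taking $\psi_3\equiv 1$ first) can be made to work but is less clean because $\varphi_3$-derivatives survive on the other side. For (iv) $\Rightarrow$ (iii), your parity split is correct (the transport piece is $\mb{p}$-odd, the $\mb{z}$-OU piece is $\mb{p}$-even), but ``integration along characteristics'' understates the argument: the paper uses the uniqueness of positive $L^1$ stationary solutions for the $\mb{z}$-OU equation (Proposition \ref{prop:uniqueness}) to conclude $\rho_0=c(\mb{q},\mb{p})e^{-\beta|\mb{z}|^2/2}$, and only then substitutes back into the vanishing transport equation.
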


\section{Rigorous verification of linear response theory (LRT) and the Green-Kubo formula for the overdamped Langevin}
\label{sec_LRT}

{\blue In this section, we rigorously verify the linear response theory (LRT) and the Green-Kubo relation for overdamped reversible Langevin dynamics with constant diffusion coefficients.   The linear response theory aims to study the asymptotic dependence of the solution $\rho^\eps$ of the perturbed Fokker-Planck equation \eqref{eq:FPpOL} in terms of the small external force. Recall the Gibbs measure $\rho_0(\mb{q})$ for the original overdamped Langevin dynamics. In the weak formulation, for any $\varphi \in C_c^{\infty}(\R^d)$, we define the response function as follows:
\begin{align} \label{def:delta}
    R(t,\eps;\varphi) := \dfrac{1}{\eps} \left( \int_{\R^d} \varphi(\mb{q}) \rho^{\eps}(\mb{q},t) \dd\mb{q} - \int_{\R^d} \varphi(\mb{q}) \rho_0(\mb{q}) \dd\mb{q} \right).
\end{align}

The term $\eps R(t, \eps; \varphi)$ represents the leading-order (i.e., $O(\eps)$) change of an observable $\varphi$ at time $t$ under the external perturbation $\eps \mb{M}$. Mathematically, LRT focuses on the behavior of $R(t, \eps; \varphi)$ as $\eps \to 0$ and $t \to \infty$. In Theorem \ref{thm:LRTO}, we will prove the convergence of $R(t, \eps; \varphi)$ for fixed $t$ or fixed $\eps$, as well as the double limits for both $\lim_{\eps \to 0^+} \lim_{t \to +\infty}$ and $\lim_{t \to +\infty} \lim_{\eps \to 0^+}$. The main conclusion is the convergence of the response function $R(t, \eps; \varphi)$ in terms of the small parameter for the external perturbation $\eps \leq \eps_0$, uniformly for $t \in [0, +\infty)$. In Section \ref{sec3.1}, we first give some preparations on the perturbed invariant measure including the hypoellipticity and the exponential convergence of the Fokker-Planck equation. Then we study the linear response theory for general external force in Section \ref{sec_LRT_over} and the Green-Kubo relation for conservative external force in Section \ref{sec3.2}.
}

\subsection{Invariant measure and the exponential convergence for irreversible perturbation}\label{sec3.1}
{\blue In this subsection, we consider an irreversible perturbation in a form of general external force $\eps \mb{M}$.  Before studying the linear response theory with respect to the external force $\eps \mb{M}$ in Section \ref{sec_LRT_over}, we first prepare some preliminary results, including estimates on the perturbed invariant measure and the well-posedness, hypoellipticity and exponential convergence of the Fokker-Planck equation.

Consider the following reversible overdamped Langevin system at equilibrium 
\begin{align} \label{eq:OL1}
    \dd \mb{q}_t = -\bm{\sigma \sigma}^T \nabla V(\mb{q}) \dd t + \sqrt{2} \bm{\sigma} \dd \mb{B}, \quad \mb{q}_{t=0} \sim \rho_0(\mb{q}) := \dfrac{1}{Z} e^{-V(\mb{q})}, \quad Z := \int_{\R^d} e^{-V(\mb{q})} \dd \mb{q}.
\end{align}
Here, $\bm{\sigma} \in \R^{d \times d}$ is constant and nonsingular.

Let $\mb{M} \in C_c^{\infty}(\R^d; \R^d)$. For any $\eps > 0$, suppose that at time $t=0$, an external force $\eps \mb{M}$ is added to the system, which yields the following perturbed SDE 
\begin{align} \label{eq:pertOL}
    \dd \mb{q}_t^{\eps} = \bm{\sigma \sigma}^T (-\nabla V(\mb{q}^{\eps}) + \eps \mb{M}(\mb{q}^{\eps})) \dd t + \sqrt{2} \bm{\sigma} \dd \mb{B},
\end{align}
with the initial distribution $
    \mb{q}_{t=0}^{\eps} \sim \rho_0.$
Notice that the initial data for the perturbed SDE is taken as the equilibrium $\rho_0$ for the unperturbed SDE.} 

\subsubsection{Invariant measure: existence, uniqueness, and positivity}

In this subsection, we clarify some known results on the existence, uniqueness, and positivity of the invariant measure, i.e., the stationary solution to the corresponding Fokker-Planck equation. For simplicity in notation related to the Fokker-Planck equation, we will use $2\bm{\sigma} \bm{\sigma}^T$ as the variance.

{\blue 
From this point on, we impose the following assumptions on the potential $V(\mb{x})$

\smallskip
\textbf{Assumption (I)} There exists $\alpha > 0$ such that
\begin{align} \label{ass:polytrap}
    |\nabla V(\mb{x})| \leq C_1 |\mb{x}|^{\alpha} + C_2, \quad \limsup_{|\mb{x}| \to \infty} \dfrac{-\mb{x} \cdot \nabla V(\mb{x})}{|\mb{x}|^{\alpha + 1}} =: \gamma_1 < 0.
\end{align}

\textbf{Assumption (II)} There exists $\lambda > 0$ such that
\begin{align} \label{ass:Poincare}
    \liminf_{|\mb{x}| \to \infty} \left( |\bm{\sigma}^T \nabla V(\mb{x})|^2 - 2 \mathrm{tr}(\bm{\sigma} \bm{\sigma}^T \nabla^2 V(\mb{x})) \right) =: \lambda > 0.
\end{align}

Assumption (I) ensures the uniqueness of the invariant measure and provides a decay estimate at infinity \cite[Theorem 3.4.3]{bogachev2015fokker}. According to \cite{dolbeault2015hypocoercivity}, Assumption (II) ensures that the measure $e^{-V(\mb{x})}$ satisfies Poincaré's inequality.
}

Notice that  \eqref{eq:pertOL} is irreversible, and the Gibbs measure $\rho_0(\mb{q})$ is no longer the invariant measure. In fact, even the existence of an invariant measure for an irreversible system is not trivial. 

{\blue 
Under Assumption (I), the potential is a trapping potential with superlinear growth, which guarantees that both equations \eqref{eq:OL1} and \eqref{eq:pertOL} admit a unique invariant measure with density functions $\rho_0$ and $\rho_\infty^{\varepsilon}$, respectively. In Lemma \ref{lmm:uniqueness}, we establish the existence, uniqueness, and far-field decay estimates for the invariant measure under Assumption (I). Furthermore, if exponential convergence to the invariant measure over long times is considered, we will also require Assumption (II). In Proposition \ref{prop:expTV}, we recall the result of exponential convergence for the invariant measure $\rho_{\infty}^\eps$ in $L^1(\mathbb{R}^d)$, which is crucial for obtaining the linear response theory.

For the underdamped case, we restrict our study to perturbations in a potential form. Assumptions (I) and (II) are also essential for the well-posedness of the invariant measure and the exponential convergence of the perturbed Fokker-Planck equation in $L^2(1/\rho_{\infty}^\eps)$; see Section \ref{sec4}. 
}

\begin{lemma} \label{lmm:uniqueness}
    (Existence, uniqueness, and positivity of the invariant measure) Let $V(\mb{q}) \in C^\infty(\R^d; \R)$ satisfy Assumption (I), and let $\mb{M} \in C_c^{\infty}(\R^d; \R^d)$. For all $\varepsilon \in [0,1)$, consider the SDE \eqref{eq:pertOL}. Then it admits a unique invariant measure with density $\rho_\infty^{\varepsilon}$ that satisfies 
    \begin{align} \label{eq:est}
        e^{-K_1 (|\mb{q}|^{\alpha+1} + 1)} \leq \rho_\infty^{\varepsilon} \leq e^{-K_2 (|\mb{q}|^{\alpha+1} + 1)},
    \end{align}
    where constants $K_1, K_2 > 0$ are uniform in $\varepsilon$.
\end{lemma}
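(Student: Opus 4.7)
The argument splits naturally into (a) existence, uniqueness, and positivity of $\rho_\infty^\eps$, and (b) the two-sided pointwise bound \eqref{eq:est} with constants uniform in $\eps \in [0,1)$. Two structural features are essential: since $\bm{\sigma\sigma}^T$ is constant and nondegenerate, the generator $\mathcal{L}^\eps$ is uniformly elliptic with $C^\infty$ coefficients; and since $\mb{M}$ has compact support, the perturbation is a bounded, localized modification of the $\eps=0$ operator, uniform for $\eps \in [0,1)$.

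For (a), I plan to invoke the Bogachev-Krylov-R\"ockner framework \cite[Theorem 3.4.3]{bogachev2015fokker} cited in the paper. The essential input is a Foster-Lyapunov drift condition $\mathcal{L}^\eps \Phi \leq -c_1 \Phi + c_2 \mathbf{1}_{\bar B_R}$. Taking $\Phi(\mb{q}) = e^{\delta V(\mb{q})}$ for a sufficiently small $\delta > 0$ and using the trapping condition $-\mb{q}\cdot\nabla V \leq -\gamma_1 |\mb{q}|^{\alpha+1}$ at infinity from Assumption (I), together with the bound $|\eps\bm{\sigma\sigma}^T \mb{M}(\mb{q})| \leq \|\bm{\sigma\sigma}^T \mb{M}\|_\infty \mathbf{1}_{\mathrm{supp}\,\mb{M}}(\mb{q})$, verifies this inequality with constants $c_1, c_2, R$ independent of $\eps \in [0,1)$. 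Existence of an invariant measure then follows by Krylov-Bogoliubov, smoothness of the density $\rho_\infty^\eps$ from interior elliptic regularity, and both uniqueness and strict positivity from irreducibility of the nondegenerate diffusion.

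For the upper bound in (b), integrating the Foster-Lyapunov inequality against $\rho_\infty^\eps$ yields the uniform moment bound $\int e^{\delta V} \rho_\infty^\eps \dd\mb{q} \leq C$. A local $L^\infty$ estimate (Moser iteration on unit balls, with constants depending only on ellipticity and on $\sup_{B_1(\mb{q})} |\nabla V|$) then upgrades this to the pointwise decay $\rho_\infty^\eps(\mb{q}) \leq C' e^{-\delta V(\mb{q})/2}$. Integrating Assumption (I) gives $V(\mb{q}) \geq c|\mb{q}|^{\alpha+1} - C$, producing the upper bound in \eqref{eq:est} with $K_2$ uniform in $\eps$. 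For the lower bound, I would substitute $\rho_\infty^\eps = e^{-V} h^\eps$ on the exterior set $\R^d \setminus \mathrm{supp}\,\mb{M}$, where the stationary equation reduces to the divergence-form equation $\nabla \cdot (e^{-V} \bm{\sigma\sigma}^T \nabla h^\eps) = 0$ without zeroth-order term. The strong maximum principle together with a Harnack chain starting from a uniform-in-$\eps$ positive lower bound of $\rho_\infty^\eps$ on a compact neighborhood of $\mathrm{supp}\,\mb{M}$ gives $h^\eps \geq c > 0$ with $c$ independent of $\eps$, and hence $\rho_\infty^\eps \geq c\, e^{-V} \geq e^{-K_1(|\mb{q}|^{\alpha+1}+1)}$.

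The main obstacle I anticipate is the $\eps$-uniformity of these Harnack-type arguments. It rests on two observations: the coefficients of $\mathcal{L}^\eps$ are uniformly bounded and uniformly elliptic on compacts as $\eps$ varies in $[0,1)$, so the local Harnack constant is itself uniform in $\eps$; and the uniform moment bound $\int e^{\delta V} \rho_\infty^\eps \dd\mb{q} \leq C$ combined with the normalization $\int \rho_\infty^\eps \dd\mb{q} = 1$ forces a definite fraction of the mass of $\rho_\infty^\eps$ to lie in a fixed large ball for every $\eps$, which supplies the uniform positivity on a compact set needed to seed the Harnack chain that propagates to the exterior region.
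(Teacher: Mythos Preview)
Your proposal is correct and follows the same route the paper does: both invoke \cite[Theorem 3.4.3]{bogachev2015fokker} for existence and the two-sided bound, and the paper then cites \cite[Theorem 4.1.6]{bogachev2015fokker} (reproduced as Lemma \ref{prop:uniqueness}) for uniqueness. In fact the paper gives no proof beyond these two citations, so your sketch---Foster--Lyapunov drift with $\Phi=e^{\delta V}$, moment bound plus local Moser/Harnack estimates for the upper and lower pointwise bounds, and the observation that compact support of $\mb{M}$ makes all constants uniform in $\eps$---is a faithful unpacking of what lies inside the cited theorems rather than a genuinely different argument.
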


The existence and estimate \eqref{eq:est} can be found in \cite[Theorem 3.4.3]{bogachev2015fokker}, and uniqueness is ensured by the following lemma from \cite[Theorem 4.1.6]{bogachev2015fokker}.

\begin{lemma}[Theorem 4.1.6 in \cite{bogachev2015fokker}] \label{prop:uniqueness}
    Suppose that 
    \begin{align} \label{ass:smoothness}
        \mb{b} \in C^{\infty}(\R^d; \R^d), \quad \bm{\sigma} \in C^{\infty}(\R^d; \R^{d \times d}), \quad \bm{\sigma} \bm{\sigma}^T \text{ is strictly elliptic}.
    \end{align}
    Let $\rho \in C^{\infty}(\R^d)$ solve $\mathcal{L}^* \rho = 0$, where $\mathcal{L}^*$ is defined in \eqref{eq:FPOLE}. Assume that $\rho > 0$ and $\rho \in L^1(\R^d)$. If
    \begin{align} \label{eq:int}
        \dfrac{\|\bm{\sigma \sigma}^T\|}{1 + \|\mb{x}\|^2} \in L^1(\rho \dd \mb{x}), \quad \dfrac{\|\mb{b}\|}{1 + \|\mb{x}\|} \in L^1(\rho \dd \mb{x}) 
    \end{align}
    holds, then $\rho$ is the unique non-zero solution of $\mathcal{L}^* u = 0$ such that $u \in L^1(\R^d)$ and $u \geq 0$.
\end{lemma}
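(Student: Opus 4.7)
The plan is to show that any non-negative $L^1$ solution $u$ of $\mathcal{L}^* u = 0$ must be a constant multiple of the given solution $\rho$, by deriving a divergence-form elliptic equation for the ratio $h := u/\rho$ and closing a cutoff energy estimate using only the integrability hypothesis \eqref{eq:int}.

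First I would handle regularity and positivity. Since $\tfrac{1}{2}\bm{\sigma}\bm{\sigma}^T$ is smooth and strictly elliptic, $\mathcal{L}^*$ is a second-order linear elliptic operator with smooth coefficients; elliptic regularity (Weyl's lemma) makes any $L^1_{\mathrm{loc}}$ distributional solution $u$ automatically $C^{\infty}(\R^d)$, and the strong maximum principle upgrades $u \geq 0$, $u \not\equiv 0$ to $u > 0$ everywhere. Hence $h := u/\rho \in C^{\infty}(\R^d)$ is strictly positive. Next I derive the PDE for $h$. Writing $\mathcal{L}^* v = -\nabla\cdot\mb{j}(v)$ with $\mb{j}(v) := -\tfrac{1}{2}\nabla\cdot(\bm{\sigma}\bm{\sigma}^T v) + v\mb{b}$, a direct Leibniz computation yields the splitting $\mb{j}(\rho h) = h\,\mb{j}(\rho) - \tfrac{1}{2}\rho\,\bm{\sigma}\bm{\sigma}^T\nabla h$, and combining this with the divergence-free properties of $\mb{j}(\rho)$ and $\mb{j}(\rho h)$ (from $\mathcal{L}^*\rho = \mathcal{L}^* u = 0$) produces
\begin{equation*}
\tfrac{1}{2}\nabla\cdot\!\bigl(\rho\,\bm{\sigma}\bm{\sigma}^T\nabla h\bigr) \;=\; \mb{j}(\rho)\cdot\nabla h,
\end{equation*}
a second-order linear elliptic equation for $h$ with no zeroth-order term.

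The main step is a truncated energy identity. Take a smooth radial cutoff $\chi_R(\mb{x}) = \chi(|\mb{x}|/R)$ with $\chi_R \equiv 1$ on $B_R$, $\mathrm{supp}\,\chi_R \subset B_{2R}$, $|\nabla\chi_R| \lesssim 1/R$, $|\nabla^2\chi_R| \lesssim 1/R^2$, and a smoothed bounded truncation $\psi_M \in C^{\infty}(\R)$ agreeing with $t \mapsto t\wedge M$ and with $0 \leq \psi_M' \leq 1$, antiderivative $\Psi_M(t) := \int_0^t \psi_M$. Testing the equation for $h$ against $\chi_R\,\psi_M(h)$ and integrating by parts twice—first to produce the energy $\rho|\bm{\sigma}^T\nabla h|^2$ and use $\psi_M(h)\nabla h = \nabla\Psi_M(h)$ together with $\nabla\cdot\mb{j}(\rho) = 0$, then to move any remaining $\nabla h$ off the annulus onto derivatives of the coefficients—yields, after cancellation of the $\nabla\rho$ terms (thanks to the symmetry of $\bm{\sigma}\bm{\sigma}^T$ and the explicit form of $\mb{j}(\rho)$),
\begin{equation*}
\int \chi_R\,\psi_M'(h)\,\rho\,|\bm{\sigma}^T\nabla h|^2\,\dd\mb{x}
\;=\; \int \Psi_M(h)\,\rho\,\bm{\sigma}\bm{\sigma}^T{:}\nabla^2\chi_R\,\dd\mb{x}
\;+\; 2\int \Psi_M(h)\,\rho\,\mb{b}\cdot\nabla\chi_R\,\dd\mb{x}.
\end{equation*}
On the annulus $R \leq |\mb{x}| \leq 2R$ we have $1/R \leq 2/(1+|\mb{x}|)$ and $1/R^2 \leq 4/(1+|\mb{x}|^2)$; splitting $\Psi_M(h)\leq (M^2/2)\mathbf{1}_{\{h\leq M\}} + Mh\,\mathbf{1}_{\{h>M\}}$ (so $\Psi_M(h)\rho \leq (M^2/2)\rho + M u$ with $u \in L^1$), the hypothesis \eqref{eq:int} drives both right-hand sides to zero as $R\to\infty$ for each fixed $M$. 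Strict ellipticity of $\bm{\sigma}\bm{\sigma}^T$ and $\rho > 0$ then force $\nabla h \equiv 0$ on $\{\psi_M'(h) > 0\}$; sending $M\to\infty$ gives $\nabla h \equiv 0$ globally, so $h\equiv c$ is a positive constant, hence $u = c\rho$ with $c = \|u\|_{L^1}/\|\rho\|_{L^1}$.

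The technical heart—and main obstacle—is the annulus estimate in the last step: the two integrals in \eqref{eq:int} are simultaneously necessary, as the $\mb{b}$-piece supplies the $1/(1+|\mb{x}|)$ gain matched to $|\nabla\chi_R|$ while the $\bm{\sigma}\bm{\sigma}^T$-piece supplies the $1/(1+|\mb{x}|^2)$ gain matched to $|\nabla^2\chi_R|$, and these precise pairings only emerge after the second integration by parts produces the clean cancellation of $\nabla\rho$ and $\nabla(\bm{\sigma}\bm{\sigma}^T)$ contributions. Controlling the $\{h > M\}$ portion of the annulus integral without any a priori moments on $u$—using solely $u\in L^1$ and the $\rho$-integrability in \eqref{eq:int}—is the delicate book-keeping step that makes the hypothesis sharp.
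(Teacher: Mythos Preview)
The paper does not prove this lemma at all: it is quoted verbatim as Theorem~4.1.6 of Bogachev--Krylov--R\"ockner--Shaposhnikov and used as a black box. So there is no ``paper's proof'' to compare against; I can only assess your argument on its own merits.

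Your derivation of the equation for $h=u/\rho$ is correct, and the clean identity
\[
\int \chi_R\,\psi_M'(h)\,\rho\,|\bm{\sigma}^T\nabla h|^2\,\dd\mb{x}
= \int \Psi_M(h)\,\rho\,\bm{\sigma}\bm{\sigma}^T{:}\nabla^2\chi_R\,\dd\mb{x}
+ 2\int \Psi_M(h)\,\rho\,\mb{b}\cdot\nabla\chi_R\,\dd\mb{x}
\]
is genuinely nice --- the cancellation of the $\nabla\rho$ and $\nabla\cdot(\bm{\sigma}\bm{\sigma}^T)$ contributions after the second integration by parts is exactly right. The problem is the very last step, which you yourself flag as ``delicate book-keeping'' but do not carry out. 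Your bound $\Psi_M(h)\rho \le (M^2/2)\rho + Mu$ splits the annulus integrand into a $\rho$-part and a $u$-part. The $\rho$-part is handled by hypothesis~\eqref{eq:int}. But for the $u$-part you need
\[
\int_{R<|\mb{x}|<2R} u\,\frac{\|\bm{\sigma}\bm{\sigma}^T\|}{1+|\mb{x}|^2}\,\dd\mb{x}\;\longrightarrow\;0,
\qquad
\int_{R<|\mb{x}|<2R} u\,\frac{\|\mb{b}\|}{1+|\mb{x}|}\,\dd\mb{x}\;\longrightarrow\;0,
\]
and \emph{nothing in the hypotheses gives this}: condition~\eqref{eq:int} is integrability against $\rho\,\dd\mb{x}$, not against $u\,\dd\mb{x}$, and $u\in L^1$ alone says nothing when $\|\bm{\sigma}\bm{\sigma}^T\|/(1+|\mb{x}|^2)$ or $\|\mb{b}\|/(1+|\mb{x}|)$ is unbounded. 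This is not a cosmetic gap --- no choice of bounded increasing $\Phi$ with $\Phi''\ge 0$ exists (boundedness forces $\Phi'$ eventually decreasing), so the truncation cannot be rearranged to make $\Phi(h)\rho$ dominated by $\rho$ alone while keeping a nonnegative energy term.

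The actual proof in Bogachev et al.\ circumvents this by a different mechanism (it does not rely on a direct two-sided energy estimate of this form). If you want to repair your approach, you need either an additional argument showing that \emph{any} nonnegative $L^1$ solution automatically inherits the integrability~\eqref{eq:int} --- which would itself require a nontrivial a priori estimate --- or a genuinely different test-function strategy (for instance, working with $\min(u,\rho)$ or with the entropy $u\log(u/\rho)$, where convexity supplies the missing sign information).
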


\subsubsection{Well-posedness of the Fokker-Planck equation}
Suppose the density of $\mb{q}_t^{\eps}$ is given by $\rho^{\eps}(\mb{q}, t)$. Then $\rho^{\eps}(\mb{q}, t)$ satisfies the following Fokker-Planck equation 
\begin{align} \label{eq:FPpOL}
    \dfrac{\partial \rho^{\eps}(\mb{q}, t)}{\partial t} = \mathcal{L}^*_{\eps} \rho^{\eps}(\mb{q}, t) := \nabla \cdot \left( \bm{\sigma \sigma}^T \left( (\nabla V(\mb{q}) - \eps \mb{M}(\mb{q})) \rho^{\eps}(\mb{q}, t) + \nabla \rho^{\eps}(\mb{q}, t) \right) \right), 
    \quad \rho^{\eps}(\mb{q}, 0) = \rho_0(\mb{q}).
\end{align} 

In fact, $\mathcal{L}^*_{\eps}$ generates a strongly continuous semigroup of contractions, which ensures the well-posedness of \eqref{eq:FPOLE}.

\begin{lemma} \label{prop:contraction}
    Let $V(\mb{q}) \in C^\infty(\R^d; \R)$ satisfy Assumption (I), and let $\mb{M} \in C_c^{\infty}(\R^d; \R^d)$. Consider \eqref{eq:FPpOL}. Then 
    \begin{enumerate}[(i)]
        \item $\mathcal{L}^*_{\eps}$ in \eqref{eq:FPpOL} generates a strongly continuous semigroup of contractions in $L^2(1/\rho_\infty^{\eps})$;
        \item \eqref{eq:FPpOL} admits a unique solution $\rho^{\varepsilon}(\mb{q}, t) \in C^1([0, T], L^2(1/\rho_\infty^\varepsilon))$ for any $T > 0$.
    \end{enumerate}
\end{lemma}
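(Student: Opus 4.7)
The plan is to apply the Lumer-Phillips theorem to establish part (i), taking as initial domain $D := C_c^{\infty}(\mathbb{R}^d)$, which is dense in $L^2(1/\rho_\infty^\eps)$ by the positivity and two-sided pointwise bounds of $\rho_\infty^\eps$ from Lemma \ref{lmm:uniqueness}. The central algebraic identity comes from the substitution $\rho = h\,\rho_\infty^\eps$ combined with $\mathcal{L}^*_\eps \rho_\infty^\eps = 0$, which forces the invariant current
\begin{equation*}
\mb{J}^\eps := \bm{\sigma\sigma}^T\bigl[(\nabla V - \eps \mb{M})\rho_\infty^\eps + \nabla \rho_\infty^\eps\bigr]
\end{equation*}
to satisfy $\nabla \cdot \mb{J}^\eps = 0$. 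A direct computation then gives
\begin{equation*}
\mathcal{L}^*_\eps(h\,\rho_\infty^\eps) = \nabla \cdot (\rho_\infty^\eps \bm{\sigma\sigma}^T \nabla h) + \mb{J}^\eps \cdot \nabla h,
\end{equation*}
which is the symmetric/antisymmetric decomposition of $\mathcal{L}^*_\eps$ in $L^2(1/\rho_\infty^\eps)$. Pairing with $u/\rho_\infty^\eps = h$ and integrating by parts, the first term produces the nonpositive dissipation $-\int \rho_\infty^\eps |\bm{\sigma}^T \nabla h|^2\,\dd\mb{q}$, while the second vanishes because $\int h\,\mb{J}^\eps \cdot \nabla h\,\dd\mb{q} = \tfrac{1}{2}\int \mb{J}^\eps \cdot \nabla(h^2)\,\dd\mb{q} = 0$ by the divergence-free property. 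Hence $\mathcal{L}^*_\eps$ is dissipative on $D$.

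To finish the Lumer-Phillips hypotheses, I would verify that $\operatorname{range}(\lambda I - \mathcal{L}^*_\eps)$ is dense in $L^2(1/\rho_\infty^\eps)$ for some $\lambda > 0$. Given $f \in L^2(1/\rho_\infty^\eps)$, I seek $u = h\,\rho_\infty^\eps$ solving $\lambda u - \mathcal{L}^*_\eps u = f$ via the bilinear form
\begin{equation*}
a_\lambda(h, g) := \lambda \int h\,g\,\rho_\infty^\eps\,\dd\mb{q} + \int (\bm{\sigma}^T \nabla h)\cdot(\bm{\sigma}^T \nabla g)\,\rho_\infty^\eps\,\dd\mb{q} - \int g\,\mb{J}^\eps\cdot \nabla h\,\dd\mb{q}
\end{equation*}
on the weighted Sobolev space $H^1(\rho_\infty^\eps\,\dd\mb{q})$. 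Coercivity comes from the $\lambda$-term together with the uniform ellipticity of $\bm{\sigma\sigma}^T$; continuity follows from the bounds \eqref{eq:est}, the compact support of $\mb{M}$, and the smoothness of $V$. Lax-Milgram then produces a weak $h$, and elliptic regularity (smooth coefficients, strict ellipticity) upgrades $u = h\,\rho_\infty^\eps$ to be classical, yielding density of the range. Lumer-Phillips then identifies $\mathcal{L}^*_\eps$ as the generator of a $C_0$ semigroup of contractions on $L^2(1/\rho_\infty^\eps)$.

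For part (ii), I note that $\rho_0 = Z^{-1} e^{-V}$ lies in $L^2(1/\rho_\infty^\eps)$ thanks to the two-sided bounds in \eqref{eq:est} together with Assumption (I), and in fact belongs to the domain of the generator since $\mathcal{L}^*_\eps \rho_0 = -\eps\,\nabla\cdot(\bm{\sigma\sigma}^T \mb{M}\,\rho_0)$ is smooth and compactly supported, hence in $L^2(1/\rho_\infty^\eps)$. Standard semigroup theory then delivers a unique strong solution $\rho^\eps(\cdot,t) = e^{t\mathcal{L}^*_\eps}\rho_0 \in C^1([0,T], L^2(1/\rho_\infty^\eps))$. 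The main obstacle I anticipate is the range condition: the weighted function space in which one runs Lax-Milgram depends implicitly on $\rho_\infty^\eps$, so one must verify that the constants controlling coercivity and continuity are uniform enough to close the argument; this is handled by the $\eps$-uniform estimates \eqref{eq:est}, possibly after truncating to large balls with Dirichlet cutoffs to avoid integration-by-parts boundary issues at infinity and passing to the limit.
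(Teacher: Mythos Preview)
Your proposal is correct and follows essentially the same route as the paper. The paper also proves dissipativity by the substitution $u = h\,\rho_\infty^\eps$ and integration by parts, arriving at $\langle u,\mathcal{L}^*_\eps u\rangle_{L^2(1/\rho_\infty^\eps)} = -\int \rho_\infty^\eps |\bm{\sigma}^T\nabla h|^2\,\dd\mb{q}$ (it phrases the vanishing of the antisymmetric part as $-\tfrac{1}{2}\int h^2\,\mathcal{L}^*_\eps\rho_\infty^\eps\,\dd\mb{q}=0$ rather than via your divergence-free current $\mb{J}^\eps$, but $\nabla\cdot\mb{J}^\eps = \mathcal{L}^*_\eps\rho_\infty^\eps$, so the two are identical), then establishes the range condition $R(I-\mathcal{L}^*_\eps)=L^2(1/\rho_\infty^\eps)$ by Lax--Milgram on exactly the bilinear form you write, on the space $H=\{u:u/\rho_\infty^\eps\in H^1(\rho_\infty^\eps)\}$, and concludes via Hille--Yosida; part (ii) is then read off from standard semigroup theory.
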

The proof of Lemma \ref{prop:contraction} is provided in Appendix \ref{sec:proof}. We denote the semigroup generated by $\mathcal{L}^*_{\eps}$ as $\left\{e^{t \mathcal{L}^*_{\eps}}\right\}_{t \geq 0}$.

\subsubsection{Hypoellipticity}

Hypoellipticity implies the smoothness of the solution to the Fokker-Planck equation, allowing us to perform integration by parts in subsequent proofs without concern. We prove hypoellipticity by applying H\"ormander's celebrated result \cite[Theorem 1.1]{hormander1967hypoelliptic}.

Recall that a linear differential operator $P$ with $C^{\infty}$ coefficients in $\R^d$ (or an open subset of $\R^d$) is called hypoelliptic if for every distribution $u$ in $\mathcal D'(\R^d)$, we have 
\begin{align*}
    \mathrm{sing} \, \mathrm{supp} \, u = \mathrm{sing} \, \mathrm{supp} \, Pu.
\end{align*}  
Here, $\mathrm{sing} \, \mathrm{supp} \, u$ denotes the singular support of $u$ 
\begin{align*}
    \mathrm{sing} \, \mathrm{supp} \, u = \R^d \setminus \left\{ x \in \R^d : u \, \text{is smooth near } x \right\}.
\end{align*}

Consider a linear differential operator $P$ with $C^{\infty}$ coefficients, which can be written as:
\begin{align*}
    P = \sum_{j=1}^r \mb{X}_j^2 + \mb{X}_0 + c,
\end{align*}
where $\mb{X}_0, ..., \mb{X}_r$ denote first-order homogeneous differential operators in $\R^d \times (0, \infty)$ with smooth coefficients, and $c \in C^{\infty}(\R^d \times (0, \infty))$. For example, the heat operator $\Delta - \dfrac{\partial}{\partial t}$ in $\R^d \times (0, \infty)$ can be recast as
\begin{align*}
    \Delta - \dfrac{\partial}{\partial t} = \sum_{i=1}^d (\mb{X}_i)^2 + \mb{X}_0,
\end{align*}
where 
\begin{align*}
    \mb{X}_i = \dfrac{\partial}{\partial x_i}, \quad i = 1, 2, ..., d, \quad \mb{X}_0 = -\dfrac{\partial}{\partial t}.
\end{align*}

H\"ormander’s theorem \cite[Theorem 1.1]{hormander1967hypoelliptic} relates the Lie algebra generated by $\mb{X}_i, \, i = 0, 1, 2, ..., r$ to the hypoellipticity of $P$.

\begin{theorem}[Hypoellipticity, Theorem 1.1 in \cite{hormander1967hypoelliptic}] \label{thm:hypoellipticity}
    Consider 
    \begin{align*}
        P = \sum_{j=1}^r \mb{X}_j^2 + \mb{X}_0 + c,
    \end{align*}
    where $\mb{X}_0, ..., \mb{X}_r$ denote first-order homogeneous differential operators in $\R^d \times (0, \infty)$ with smooth coefficients, and $c \in C^{\infty}(\R^d \times (0, \infty))$. If, at any given point $(\mb{q}, t) \in \R^d \times (0, \infty)$,
    \begin{align*}
        \mathrm{span} \left\{ \mb{X}_{j_1}, [\mb{X}_{j_1}, \mb{X}_{j_2}], [\mb{X}_{j_1}, [\mb{X}_{j_2}, \mb{X}_{j_3}]], ..., [\mb{X}_{j_1}, [\mb{X}_{j_2}, [\mb{X}_{j_3}, ..., \mb{X}_{j_k}]]] \right\} = \R^{d+1},
    \end{align*}
    where $j_i = 0, 1, 2, ..., r$, then $P$ is hypoelliptic.
\end{theorem}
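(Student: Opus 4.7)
The plan is to reduce Hörmander's hypoellipticity theorem to establishing a \emph{subelliptic estimate}: for every compact $K \subset \R^{d+1}$, there exist $\delta>0$ and $C>0$ such that
\[
\|\varphi u\|_{H^\delta} \le C \bigl( \|Pu\|_{L^2} + \|u\|_{L^2} \bigr) \quad \text{for all } u \in C_c^\infty(K).
\]
Once this is in hand, the passage to hypoellipticity is standard: I would apply the estimate to Friedrichs mollifications $\chi_\eps u$, commute the mollifier with $P$ to obtain a uniform $H^\delta$ bound, and then iterate the estimate on the successively refined regularity to bootstrap $\varphi u$ into every Sobolev class $H^s$ locally wherever $Pu$ is smooth.

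For the subelliptic estimate itself, I would proceed in two stages. \textbf{Stage one} is an $L^2$ energy identity obtained by pairing $Pu$ with $u$ and integrating by parts: since $\mb{X}_0$ has skew-symmetric principal part modulo bounded lower-order terms and $c$ is bounded on $K$, this yields
\[
\sum_{j=1}^r \|\mb{X}_j u\|_{L^2}^2 \le |(Pu, u)_{L^2}| + C \|u\|_{L^2}^2,
\]
giving full $L^2$-control of the first-level fields $\mb{X}_1, \dots, \mb{X}_r$. \textbf{Stage two} upgrades this to Sobolev gains along iterated brackets via a pseudo-differential commutator argument: introducing $\Lambda^s = (1 - \Delta)^{s/2}$ with a fine localization, I would prove that for smooth vector fields $\mb{Y}, \mb{Z}$,
\[
\|\Lambda^{\sigma - 1}[\mb{Y},\mb{Z}] u\|_{L^2} \lesssim \|\Lambda^{\sigma - 1/2} \mb{Y} u\|_{L^2} + \|\Lambda^{\sigma - 1/2} \mb{Z} u\|_{L^2} + \text{lower order},
\]
so each bracket iteration costs roughly half a derivative. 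The Hörmander spanning condition then guarantees that after finitely many iterations up to some depth $m$, every coordinate direction in $\R^{d+1}$ is controlled at level $H^{2^{-m}}$, and taking $\delta = 2^{-m}$ closes the estimate.

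The hard part will be making the commutator bookkeeping in stage two both quantitatively sharp and compatible with the localization. The commutators $[\varphi, \mb{X}_j]$ generated by the cutoff produce lower-order errors that must be absorbed against the left-hand side, which forces the induction on bracket depth to track simultaneously the current fractional Sobolev order $\sigma$, the bracket depth, and the support of the test function. This is precisely the technical core of Hörmander's pseudo-differential machinery; modern alternatives (Kohn's energy method, or the Rothschild--Stein lifting and approximation by free nilpotent groups) replace that calculus with more geometric bookkeeping but face the same essential difficulty of controlling how the bracket hierarchy interacts with the cutoff scale.
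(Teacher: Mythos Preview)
Your outline is a reasonable high-level sketch of the standard route to H\"ormander's theorem (subelliptic estimate via energy identity plus iterated commutator gains, then bootstrap via mollification), and it correctly identifies the technical core as the interaction between the bracket hierarchy and localization. However, the paper does not prove this theorem at all: Theorem~\ref{thm:hypoellipticity} is simply quoted from H\"ormander's 1967 paper and invoked as a black box to establish smoothness of solutions in Lemmas~\ref{lmm:smoothOL} and~\ref{lmm:smoothUL}. There is no proof in the paper to compare your proposal against; the authors' only contribution here is to verify the spanning condition for the specific vector fields arising from the overdamped and underdamped Fokker--Planck operators.

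So your proposal is not wrong, but it goes far beyond what the paper does. If your goal is to match the paper, you should treat H\"ormander's theorem as given and instead focus on the (straightforward) verification that the Lie algebra generated by the $\mb{X}_j$'s spans $\R^{d+1}$ at every point for the two Langevin operators in question.
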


Using Theorem \ref{thm:hypoellipticity}, we obtain the smoothness of $\rho^{\varepsilon}(\mb{q}, t)$.

\begin{lemma} \label{lmm:smoothOL}
    Let $\rho^\eps(\mb{q}, t)$ be the unique solution of \eqref{eq:FPpOL}. Then it is smooth in $\R^d \times (0, \infty)$.
\end{lemma}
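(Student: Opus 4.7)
\smallskip

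\noindent\textbf{Proof plan for Lemma \ref{lmm:smoothOL}.} The strategy is a direct application of H\"ormander's theorem (Theorem \ref{thm:hypoellipticity}) to the augmented space-time operator $P := \mathcal{L}^*_{\eps} - \partial_t$, followed by the observation that $\rho^\eps$ is a distributional solution of $P\rho^\eps = 0$ on $\R^d \times (0,\infty)$.

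First, I would recast $P$ in H\"ormander's sum-of-squares-plus-drift form. Since $\bm{\sigma}$ is constant, expanding the divergence in \eqref{eq:FPpOL} yields
\[
\mathcal{L}^*_{\eps}\rho \;=\; (\bm{\sigma\sigma}^T):\nabla^2\rho \;+\; \bm{\sigma\sigma}^T(\nabla V - \eps\mb{M})\cdot\nabla\rho \;+\; c(\mb{q})\,\rho,
\]
where $c(\mb{q}) := \nabla\cdot\bigl(\bm{\sigma\sigma}^T(\nabla V - \eps\mb{M})\bigr) \in C^\infty(\R^d)$. Writing $\bm{\sigma} = [\sigma_1,\ldots,\sigma_d]$ by its columns, we have $\bm{\sigma\sigma}^T = \sum_{k=1}^d \sigma_k\sigma_k^T$, and the second-order part becomes $\sum_{k=1}^d \mb{X}_k^2$ with the first-order fields $\mb{X}_k := \sigma_k\cdot\nabla_{\mb{q}}$, $k = 1,\ldots,d$. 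Setting
\[
\mb{X}_0 \;:=\; \bm{\sigma\sigma}^T(\nabla V(\mb{q}) - \eps\mb{M}(\mb{q}))\cdot\nabla_{\mb{q}} \;-\; \partial_t,
\]
we obtain the representation $P = \sum_{k=1}^d \mb{X}_k^2 + \mb{X}_0 + c$, with $\mb{X}_0,\ldots,\mb{X}_d$ smooth on $\R^d\times(0,\infty)$.

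Next, I would check H\"ormander's bracket condition at an arbitrary point $(\mb{q},t) \in \R^d\times(0,\infty)$. Evaluated as vectors in $\R^{d+1}$, the fields are $\mb{X}_k(\mb{q},t) = (\sigma_k,\,0)$ for $k\ge 1$, and $\mb{X}_0(\mb{q},t) = (\bm{\sigma\sigma}^T(\nabla V - \eps\mb{M}),\,-1)$. Since $\bm{\sigma}$ is nonsingular, the columns $\sigma_1,\ldots,\sigma_d$ span $\R^d$, so $\mb{X}_1,\ldots,\mb{X}_d$ already span the $\mb{q}$-component $\R^d\times\{0\}$; combined with the nonzero $t$-component of $\mb{X}_0$, the family $\{\mb{X}_0,\mb{X}_1,\ldots,\mb{X}_d\}$ alone spans $\R^{d+1}$ at every point, so no Lie brackets are actually needed. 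Theorem \ref{thm:hypoellipticity} therefore gives that $P$ is hypoelliptic on $\R^d\times(0,\infty)$.

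Finally, by Lemma \ref{prop:contraction} the function $\rho^\eps \in C^1([0,T];L^2(1/\rho_\infty^\eps))$ is a strong solution of \eqref{eq:FPpOL}, hence in particular a distributional solution of $P\rho^\eps = 0$ on $\R^d\times(0,\infty)$. Since the singular support of $P\rho^\eps \equiv 0$ is empty, hypoellipticity forces $\mathrm{sing\,supp}\,\rho^\eps = \emptyset$, which is exactly $\rho^\eps \in C^\infty(\R^d\times(0,\infty))$. I do not foresee a serious obstacle: the nondegeneracy of $\bm{\sigma}$ makes the bracket condition trivial, and the only point requiring mild care is verifying that the semigroup solution qualifies as a distributional solution on the open half-space (so that the initial layer at $t=0$ does not interfere with interior regularity), which is immediate from the $C^1$-in-time, $L^2$-in-space regularity provided by Lemma \ref{prop:contraction}.
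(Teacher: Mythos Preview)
Your proposal is correct and follows essentially the same approach as the paper: a direct application of H\"ormander's theorem to $\mathcal{L}^*_\eps - \partial_t$, with the bracket condition satisfied trivially (no commutators needed) because $\bm{\sigma}$ is nonsingular. The only cosmetic difference is that the paper first performs the change of variables $\mb{q}' = \bm{\sigma}^{-1}\mb{q}$ to reduce to the identity diffusion and then takes $\mb{X}_j = \partial_{q_j'}$, whereas you work directly with the columns of $\bm{\sigma}$ as the vector fields; these are equivalent, and your explicit remark that the semigroup solution is a distributional solution on the open half-space is a useful clarification the paper leaves implicit.
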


The hypoellipticity for the Fokker-Planck equation corresponding to overdamped Langevin dynamics is trivial. However, we include the proof using H\"ormander’s hypoellipticity theorem in Appendix \ref{app:hy} to facilitate comparison with the underdamped case.

\subsubsection{Exponential convergence}

The convergence of $\rho^{\eps}(\mb{q},t)$ to $\rho_\infty^\eps(\mb{q})$ in $L^2(1/\rho_\infty^{\eps})$ cannot be derived by Poincare's inequality because \eqref{eq:pertOL} is irreversible. However, we still have exponential convergence in total variation, as shown in \cite[Theorem B]{ji2019convergence}.

\begin{proposition}[Convergence in $L^1(\R^d)$] \label{prop:expTV}
    Suppose Assumptions (I) and (II) hold. For any $\eps \in (0, 1)$, consider \eqref{eq:FPpOL}. Then there exist constants $\eps_0, C > 0$, and $r > 0$ that depend only on $V, \mb{M}$, and $\bm{\sigma}$, such that for all $\eps \in [0, \eps_0)$, we have 
    \begin{align*}
        \|\rho^{\eps}(\cdot,t) - \rho_\infty^\eps\|_{L^1(\R^d)} \leq Ce^{-rt}.
    \end{align*}
\end{proposition}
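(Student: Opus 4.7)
The plan is to apply Harris' theorem (in the precise form of \cite[Theorem~B]{ji2019convergence}) by establishing two ingredients uniformly in $\eps$: a Foster--Lyapunov drift condition and a minorization on a compact small-set. The key structural observation is that $\eps\mb{M}$ has compact support, so outside a fixed ball the perturbed and unperturbed dynamics coincide, while inside the support $\eps\mb{M}$ is bounded by an $\eps_0$-independent constant.

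For the drift condition, I would take a Lyapunov function of the form $W(\mb{q}) = 1 + |\mb{q}|^{2k}$ for a suitably large $k$ (or alternatively $W(\mb{q}) = e^{\delta V(\mb{q})}$ with $\delta > 0$ small). The adjoint generator
\begin{align*}
    \mathcal{L}_\eps f = -\bm{\sigma\sigma}^T(\nabla V - \eps\mb{M}) \cdot \nabla f + \mathrm{tr}(\bm{\sigma\sigma}^T \nabla^2 f)
\end{align*}
acts on $W$ so that, thanks to Assumption (I) (which gives $-\mb{q}\cdot\nabla V \lesssim -|\mb{q}|^{\alpha+1}$ at infinity) together with the polynomial upper bound on $|\nabla V|$ and the compact support of $\mb{M}$, one obtains for $\eps_0$ small enough the Foster--Lyapunov estimate
\begin{align*}
    \mathcal{L}_\eps W(\mb{q}) \leq -\gamma\, W(\mb{q}) + K\, \mathbf{1}_{B_R}(\mb{q}), \qquad \eps \in [0,\eps_0),
\end{align*}
with $\gamma, K, R > 0$ independent of $\eps$ (the $\eps\mb{M}$ term is a compactly supported bounded correction that can be absorbed into the $K\mathbf{1}_{B_R}$ piece).

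For the minorization on the small-set $C := \overline{B_R}$, I would invoke the hypoellipticity from Lemma~\ref{lmm:smoothOL} together with classical parabolic Gaussian lower bounds of Aronson type, which apply because $\bm{\sigma}\bm{\sigma}^T$ is constant and strictly elliptic and the drift $-\bm{\sigma\sigma}^T\nabla V + \eps\bm{\sigma\sigma}^T\mb{M}$ is smooth with $\eps$-uniform bounds on $C$. This yields a strictly positive smooth transition density $p^\eps_{t_*}(\mb{q},\mb{q}')$ at a fixed time $t_* > 0$, and hence a probability measure $\nu$ and a constant $\kappa > 0$ such that $p^\eps_{t_*}(\mb{q},\cdot) \geq \kappa\,\nu$ on $C$, with $\kappa$ uniform in $\eps$ by the continuous dependence of the coefficients on $\eps$.

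With drift and minorization in place, the quantitative form of Harris' theorem produces exponential convergence in the weighted total-variation norm $\|\mu\|_W := \sup_{|\psi|\leq W}|\mu(\psi)|$ with rate $r$ and constant $C$ uniform in $\eps \in [0,\eps_0)$. Applied to the initial measure $\rho_0\,\dd\mb{q}$, for which $\|\rho_0\,\dd\mb{q} - \rho_\infty^\eps\,\dd\mb{q}\|_W$ is finite and uniformly bounded thanks to \eqref{eq:est} and the explicit Gibbs form of $\rho_0$, and using $\|\cdot\|_{L^1} \leq \|\cdot\|_W$, yields the stated bound. The main obstacle is precisely the uniformity in $\eps$ of both $\gamma$ and $\kappa$; both reduce to the compact support of $\mb{M}$, which lets one view the perturbed generator as a bounded, compactly supported modification of the unperturbed one for which Assumptions (I)--(II) directly supply the required drift and ellipticity structure.
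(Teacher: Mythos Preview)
Your strategy is the paper's strategy: Harris' theorem via a Foster--Lyapunov drift plus a uniform-in-$\eps$ minorization on a compact set, invoking \cite{ji2019convergence}. Two points of comparison are worth flagging.

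First, the polynomial Lyapunov function $1+|\mb{q}|^{2k}$ does not in general yield the multiplicative drift $\mathcal{L}_\eps W\leq -\gamma W+K$ under Assumption~(I) alone: the drift term contributes only $-c|\mb{q}|^{2k-1+\alpha}$, which dominates $|\mb{q}|^{2k}$ only when $\alpha\geq 1$. Your alternative $e^{\delta V}$ is the right choice, and indeed the paper takes $U(\mb{q})=e^{V(\mb{q})/3}$; the computation then uses Assumption~(II) (not just~(I)) to make the bracket $\mathrm{tr}(\bm{\sigma\sigma}^T\nabla^2 V)-\tfrac{2}{3}|\bm{\sigma}^T\nabla V|^2$ strictly negative at infinity.

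Second, your minorization argument appeals to Aronson-type Gaussian lower bounds, but those require globally bounded drift, which fails here since $\nabla V$ grows polynomially. The paper sidesteps this by a perturbation argument: it first establishes minorization at $\eps=0$ (via \cite[Lemma~3.3]{ji2019convergence}), then invokes a stability estimate for transition densities, \cite[Corollary~9.8.26]{bogachev2015fokker}, to get $\|p^\eps(\mb{q},\cdot,t_0)-p^0(\mb{q},\cdot,t_0)\|_{L^1}\leq C\eps$ uniformly on the small set, which transfers the minorization constant from $\eps=0$ to all $\eps<\eps_0$. This is cleaner than trying to control the heat kernel directly with unbounded drift, and it is where the smallness of $\eps_0$ actually enters.
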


The proof of Proposition \ref{prop:expTV} employs a version of Harris's theorem by Hairer and Majda \cite{hairer2010convergence}. For completeness, the proof is provided in Appendix \ref{app:hy}.

\subsection{Linear response theory (LRT)}\label{sec_LRT_over}

Recall \eqref{def:delta} and perturbed SDE \eqref{eq:pertOL}. We will study the behaviors of the response function \eqref{def:delta}.
Let $\widetilde{\rho^{\eps}}(\mb{q},t) = \rho^{\eps}(\mb{q},t) - \rho_0$. Then
\begin{align*}
    R(t, \eps; \varphi) := \dfrac{1}{\eps} \int_{\R^d} \varphi(\mb{q}) \widetilde{\rho^{\eps}}(\mb{q}, t) \dd\mb{q}.
\end{align*}
The function $\widetilde{\rho^{\eps}}(\mb{q}, t)$ is smooth and satisfies 
\begin{align} \label{eq:FPp}
    \dfrac{\partial \widetilde{\rho^{\eps}}(\mb{q}, t)}{\partial t} = \mathcal{L}^*_{\eps} \widetilde{\rho^{\eps}}(\mb{q}, t) + \eps \nabla \cdot (\rho_0 \bm{\sigma} \bm{\sigma}^T \mb{M}), \quad \widetilde{\rho^{\eps}}(\mb{q}, 0) = 0.
\end{align}

Using the dissipative property of the semigroup, we can derive the following estimate for $\widetilde{\rho^{\eps}}(\mb{q}, t)$.

\begin{lemma} \label{lmm:eps}
    Consider $\widetilde{\rho^{\eps}}(\mb{q}, t)$ in \eqref{eq:FPp}. Then 
    \begin{enumerate}[(i)]
        \item There exists a constant $C > 0$, which depends only on $\mb{M}$ and $V$, such that 
        \begin{align} \label{eq:L2est}
            \|\widetilde{\rho^{\eps}}(\cdot, t)\|_{L^1} \leq C \eps,
        \end{align}
        for all $t \geq 0$.
        
        \item There exists a constant $C > 0$, which depends only on $\mb{M}$ and $V$, such that 
        \begin{align} \label{eq:L2est_L2}
            \|\widetilde{\rho^{\eps}}(\cdot, t)\|_{L^2(1/\rho_\infty^\eps)} \leq C \eps t,
        \end{align}
        for all $t \geq 0$.
            Furthermore, if $\mb{M} = \nabla W$ is in gradient form, we also have 
        \begin{align} \label{xxxx}
            \|\widetilde{\rho^{\eps}}(\cdot, t)\|_{L^2(1/\rho_\infty^\eps)} \leq C \eps.
        \end{align}
    \end{enumerate}
\end{lemma}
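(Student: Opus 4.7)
The plan is to use Duhamel's formula for \eqref{eq:FPp} combined with the dissipative properties of the semigroup $e^{t\mathcal{L}^*_\eps}$ in the appropriate norm, supplemented (for parts (i) and (iii)) by a perturbation estimate for the invariant measure $\rho_\infty^\eps$. Writing $h$ for the compactly supported mean-zero function $\pm\nabla\cdot(\rho_0\bm{\sigma}\bm{\sigma}^T\mb{M})$ (its sign is immaterial for the estimates below), Duhamel gives
\begin{equation*}
\widetilde{\rho^\eps}(t) = \eps\int_0^t e^{(t-s)\mathcal{L}^*_\eps}h\,\mathrm{d}s,
\end{equation*}
and for parts (i) and (iii) I would additionally use the decomposition $\widetilde{\rho^\eps}(t) = [\rho^\eps(t) - \rho_\infty^\eps] + [\rho_\infty^\eps - \rho_0]$, whose first summand equals $e^{t\mathcal{L}^*_\eps}(\rho_0 - \rho_\infty^\eps)$ by invariance of $\rho_\infty^\eps$.

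Part (ii) is immediate: since $\mb{M}$ has compact support $K$ on which $\rho_0$ and $\rho_\infty^\eps$ are uniformly bounded below by a positive constant for $\eps \in [0,\eps_0)$ (by Lemma \ref{lmm:uniqueness} and the explicit form of $\rho_0$), one has $\|h\|_{L^2(1/\rho_\infty^\eps)}\leq C$ uniformly; the $L^2(1/\rho_\infty^\eps)$-contraction of the semigroup (Lemma \ref{prop:contraction}) applied to the Duhamel formula gives $\|\widetilde{\rho^\eps}(t)\|_{L^2(1/\rho_\infty^\eps)}\leq C\eps t$. For part (i), the first piece of the decomposition is bounded in $L^1$ by $\|\rho_0-\rho_\infty^\eps\|_{L^1}$ via the $L^1$-contractivity of the Markov semigroup, so the task reduces to the stationary estimate $\|\rho_\infty^\eps - \rho_0\|_{L^1}\leq C\eps$. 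Subtracting $\mathcal{L}^*_\eps\rho_\infty^\eps = 0$ from $\mathcal{L}^*_0\rho_0 = 0$ yields the perturbation equation
\begin{equation*}
\mathcal{L}^*_0(\rho_\infty^\eps - \rho_0) = \eps\nabla\cdot(\bm{\sigma}\bm{\sigma}^T\mb{M}\rho_\infty^\eps).
\end{equation*}
Testing against $(\rho_\infty^\eps - \rho_0)/\rho_0$, exploiting that $\mathcal{L}^*_0$ has Dirichlet form $-\int|\bm{\sigma}^T\nabla(u/\rho_0)|^2\rho_0\,\mathrm{d}\mb{q}$ in $L^2(1/\rho_0)$, integrating by parts on the right (legitimate since $\mb{M}$ is compactly supported), and applying Cauchy--Schwarz gives $\int|\bm{\sigma}^T\nabla((\rho_\infty^\eps-\rho_0)/\rho_0)|^2\rho_0\,\mathrm{d}\mb{q}\leq C\eps^2$; Poincaré for $\rho_0$ (Assumption (II)) applied to the mean-zero ratio $(\rho_\infty^\eps-\rho_0)/\rho_0$ upgrades this to $\|\rho_\infty^\eps-\rho_0\|_{L^2(1/\rho_0)}\leq C\eps$, and a final Cauchy--Schwarz (using $\int\rho_0 = 1$) delivers $\|\rho_\infty^\eps-\rho_0\|_{L^1}\leq C\eps$.

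For part (iii), when $\mb{M} = \nabla W$ the perturbed SDE \eqref{eq:pertOL} is again reversible with modified potential $V - \eps W$, so $\rho_\infty^\eps = e^{-(V-\eps W)}/Z_\eps$ explicitly. Because $W\in C_c^\infty$ is bounded, the Holley--Stroock perturbation argument shows that $\rho_\infty^\eps$ satisfies Poincaré with a constant $\lambda_\eps$ uniformly bounded below for small $\eps$, yielding exponential decay $\|\rho^\eps(t)-\rho_\infty^\eps\|_{L^2(1/\rho_\infty^\eps)}\leq e^{-\lambda_\eps t}\|\rho_0-\rho_\infty^\eps\|_{L^2(1/\rho_\infty^\eps)}$. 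A first-order Taylor expansion $Z_\eps = Z(1+\eps\langle W\rangle_{\rho_0}+O(\eps^2))$ together with $e^{\eps W} = 1+O(\eps)$ (uniform since $W$ is bounded) gives $\rho_0 - \rho_\infty^\eps = \rho_\infty^\eps\cdot O(\eps)$ pointwise, hence $\|\rho_0-\rho_\infty^\eps\|_{L^2(1/\rho_\infty^\eps)}\leq C\eps$, and the triangle inequality delivers the uniform-in-$t$ bound. The main obstacle is the $L^1$ estimate in part (i): because $\mathcal{L}^*_\eps$ is irreversible one cannot use Poincaré for $\rho_\infty^\eps$ directly, so the perturbation analysis must be routed through the reversible unperturbed operator, and the interplay between $L^1$-contractivity (automatic for Markov semigroups) and the $L^2(1/\rho_0)$ spectral gap (Assumption (II)) is what produces the $O(\eps)$ bound simultaneously in the $L^1$ norm and uniformly in $t$; a subsidiary concern is the $\eps$-uniformity of all constants, which is supplied by the far-field bound \eqref{eq:est} from Lemma \ref{lmm:uniqueness}.
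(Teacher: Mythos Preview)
Your argument for the $C\eps t$ bound in (ii) coincides with the paper's. For (i) and for the uniform $L^2$ bound in the gradient case, you take a genuinely different route, and in one place it has a gap.

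For (i), the paper does \emph{not} pass through the stationary estimate $\|\rho_\infty^\eps-\rho_0\|_{L^1}\le C\eps$. It stays with Duhamel and splits the mean-zero source $\mu=\nabla\cdot(\rho_0\bm{\sigma}\bm{\sigma}^T\mb{M})$ into its positive and negative parts $\mu^\pm$, each of mass $c_0$; writing
\[
e^{(t-s)\mathcal{L}_\eps^*}\mu=\bigl(e^{(t-s)\mathcal{L}_\eps^*}\mu^+-c_0\rho_\infty^\eps\bigr)-\bigl(e^{(t-s)\mathcal{L}_\eps^*}\mu^--c_0\rho_\infty^\eps\bigr),
\]
it applies the Harris-type $L^1$ exponential convergence of Proposition~\ref{prop:expTV} to each bracket (initial data $\mu^\pm/c_0$) and integrates in $s$ to obtain $C\eps$ uniformly in $t$. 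For the gradient case it repeats the same $\mu^\pm$ trick in $L^2(1/\rho_\infty^\eps)$, where exponential decay is available by Poincar\'e. Your decomposition $\widetilde{\rho^\eps}(t)=[\rho^\eps(t)-\rho_\infty^\eps]+[\rho_\infty^\eps-\rho_0]$ is a clean alternative in the gradient case, since $\rho_\infty^\eps$ is explicit there and the Taylor bound $\|\rho_0-\rho_\infty^\eps\|_{L^2(1/\rho_\infty^\eps)}\le C\eps$ is immediate; combined with Holley--Stroock and $L^2$ contraction it gives the same conclusion as the paper.

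For the general (i), however, your stationary perturbation argument has a gap. To test $\mathcal{L}_0^*(\rho_\infty^\eps-\rho_0)=\eps\nabla\cdot(\bm{\sigma}\bm{\sigma}^T\mb{M}\rho_\infty^\eps)$ against $(\rho_\infty^\eps-\rho_0)/\rho_0$, integrate by parts, and then invoke Poincar\'e, you need $(\rho_\infty^\eps-\rho_0)/\rho_0\in H^1(\rho_0)$ a priori. The two-sided bound~\eqref{eq:est} from Lemma~\ref{lmm:uniqueness} does not control the ratio $\rho_\infty^\eps/\rho_0$: the constants $K_1,K_2$ need not sandwich the decay rate of $\rho_0=\frac1Z e^{-V}$, so even $L^2(\rho_0)$-membership of the ratio is not justified by the available tools. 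The paper's Duhamel-plus-Harris route sidesteps this entirely, since it never needs a quantitative stationary estimate; indeed, in the later proof of Theorem~\ref{thm:LRTO} the paper only uses the qualitative convergence $\rho_\infty^\eps\to\rho_0$ in $L^1$. If you want to salvage your route for (i), you would have to supply an independent argument that $\rho_\infty^\eps/\rho_0$ is bounded (plausible for compactly supported $\mb{M}$, but not proved in the paper).
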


\begin{proof}
    Since $\mb{M} \in C_c^{\infty}(\R^d; \R^d)$, by Duhamel's principle, we have 
    \begin{align}
        \widetilde{\rho^{\eps}}(\mb{q}, t) = \eps \int_0^t e^{(t-s) \mathcal{L}_{\eps}^*} \left( \nabla \cdot (\rho_0 \bm{\sigma} \bm{\sigma}^T \mb{M}) \right) \dd s.
    \end{align}
    Here, $e^{s \mathcal{L}_{\eps}^*} \varphi$ represents the solution to \eqref{eq:FPpOL} at time $s$ with initial value $\varphi$.

    First, since $\mu := \nabla \cdot (\rho_0 \bm{\sigma} \bm{\sigma}^T \mb{M})$ has zero Lebesgue integral, the positive part $\mu^+$ and negative part $\mu^-$ satisfy $\mu = \mu^+ + \mu^-$, and $\int \mu^+(x) \dd x = \int \mu^-(x) \dd x =: c_0$.
    Therefore, the exponential convergence in $L^1$ norm from Proposition \ref{prop:expTV} implies 
    \begin{align*}
        \eps \int_0^t & \int |e^{(t-s) \mathcal{L}_{\eps}^*} (\nabla \cdot (\rho_0 \bm{\sigma} \bm{\sigma}^T \mb{M}))| \dd x \dd s \\
        &= \eps \int_0^t \int |e^{(t-s) \mathcal{L}_{\eps}^*} (\mu^+ - c_0 \rho^\eps_\infty + c_0 \rho^\eps_\infty - \mu^-)| \dd x \dd s \\
        &= \eps \int_0^t \int |e^{(t-s) \mathcal{L}_{\eps}^*} \mu^+ - c_0 \rho^\eps_\infty| \dd x \dd s + \eps \int_0^t \int |e^{(t-s) \mathcal{L}_{\eps}^*} \mu^- - c_0 \rho^\eps_\infty| \dd x \dd s \\
        &\leq c \eps \int_0^t e^{-r(t-s)} \dd s = \frac{c \eps}{r} (1 - e^{-rt}).
    \end{align*}

    Second, since $\nabla \cdot (\rho_0 \mb{M}) \in D(\mathcal{L}^*_{\eps})$, by the contraction property of $\mathcal{L}^*_{\eps}$ from Lemma \ref{prop:contraction}, we have 
    \begin{align*}
        \|\widetilde{\rho^{\eps}}(\cdot, t)\|_{L^2(1/\rho_\infty^\eps)} = \eps \int_0^t \|e^{(t-s) \mathcal{L}_{\eps}^*} (\nabla \cdot (\rho_0 \bm{\sigma} \bm{\sigma}^T \mb{M}))\|_{L^2(1/\rho_\infty^\eps)} \dd s \leq \eps t \|\nabla \cdot (\rho_0 \bm{\sigma} \bm{\sigma}^T \mb{M})\|_{L^2(1/\rho_\infty^\eps)}.
    \end{align*}
        From \eqref{eq:est}, we know that $\|\nabla \cdot (\rho_0 \bm{\sigma} \bm{\sigma}^T \mb{M})\|_{L^2(1/\rho_\infty^\eps)}$ can be uniformly bounded since $\mb{M}$ is compactly supported. This proves \eqref{eq:L2est_L2}.

    Furthermore, the exponential convergence in $L^2(1/\rho^\eps_\infty)$ norm implies 
    \begin{align*}
        \eps \int_0^t & \int |e^{(t-s) \mathcal{L}_{\eps}^*} (\nabla \cdot (\rho_0 \bm{\sigma} \bm{\sigma}^T \mb{M}))|^2 / \rho^\eps_\infty \dd x \dd s \\
        & = \eps\int_0^t\int  |e^{(t-s)\mathcal{L}_{\eps}^*}(\mu^+- c_0\rho^\eps_\8+ c_0\rho^\eps_\8 -\mu^-) |^2 /\rho^\eps_{\8}\ud x\dd s \\
        &\leq 2 \eps \int_0^t \int |e^{(t-s) \mathcal{L}_{\eps}^*} \mu^+ - c_0 \rho^\eps_\infty|^2 / \rho^\eps_\infty \dd x \dd s + 2 \eps \int_0^t \int |e^{(t-s) \mathcal{L}_{\eps}^*} \mu^- - c_0 \rho^\eps_\infty|^2 / \rho^\eps_\infty \dd x \dd s \\
        &\leq c \eps \int_0^t e^{-r(t-s)} \dd s = \frac{c \eps}{r} (1 - e^{-rt}).
    \end{align*}
\end{proof}

Denote the Fokker-Planck operator for the reversible part as
\begin{equation}\label{Lsys}
\mathcal{L}_0^* \rho:= \nabla\cdot(\bm{\sigma\sigma}^T(\rho \nabla V+\nabla \rho)) = \nabla \cdot (\rho_0 \bm{\sigma\sigma}^T \nabla \frac{\rho}{\rho_0} ).
\end{equation} 
Then Fokker-Planck equation \eqref{eq:FPpOL} which could be reformulated as
	\begin{align*}
		\dfrac{\p \rho^{\eps}(\mb q,t)}{\p t} = \mathcal{L^*_{\eps}}\rho^{\eps}(\mb q,t) = \mathcal{L}_0^*\rho^{\eps}(\mb q,t) -\eps\nabla\cdot(\rho^{\eps}\bm\sigma\bm\sigma^T\mb M).
	\end{align*}

We now consider the limit behavior of $R(t, \eps; \varphi)$.
\begin{theorem} \label{thm:LRTO}
    Suppose that Assumptions (I) and (II) hold. Let $\mb{M}(\mb{q}) \in C_c^{\infty}(\R^d; \R^d)$, and let $\rho^{\eps}(\mb{q}, t)$ be the law of $\mb{q}^{\eps}_t$ in \eqref{eq:pertOL}. Let $\rho_\infty^\eps(\mb{q})$ be the invariant measure of \eqref{eq:pertOL}. For some $\varphi \in C_c^{\infty}(\R^d)$, consider $R(t, \eps; \varphi)$ defined in \eqref{def:delta}. Then 
    \begin{enumerate}[(i)]
        \item  (Convergence as $\eps \to 0^+$)
        For any given $t > 0$,
        \begin{align} \label{eq:LRT}
            \lim_{\eps \to 0^+} R(t, \eps; \varphi) = \int_0^t \int_{\R^d} \left[ \bm{\sigma \sigma}^T \mb{M} \cdot (\nabla(e^{s \mathcal{L}_0} \varphi)) \right] \rho_0 \dd \mb{q} \dd s.
        \end{align}
        Moreover, the limit in \eqref{eq:LRT} holds uniformly for all $t > 0$, i.e., for any $\eta > 0$, there exists $\eps_0$ such that for any $0 < \eps < \eps_0$ and $t > 0$ 
        \begin{align*}
            \left| R(t, \eps; \varphi) - \int_0^t \int_{\R^d} \left[ \bm{\sigma \sigma}^T \mb{M} \cdot (\nabla(e^{s \mathcal{L}_0} \varphi)) \right] \rho_0 \dd \mb{q} \dd s \right| < \eta.
        \end{align*}

        \item (Convergence as $\eps \to 0^+$, then $t \to \infty$) The following limit exists 
        \begin{align}
            \lim_{t \to \infty} \lim_{\eps \to 0^+} R(t, \eps; \varphi) = \int_0^\infty \int_{\R^d} \left[ \bm{\sigma \sigma}^T \mb{M} \cdot (\nabla(e^{s \mathcal{L}_0} \varphi)) \right] \rho_0 \dd \mb{q} \dd s,
        \end{align}
        and the convergence in $t$ is exponentially fast, i.e., there exist constants $C > 0$ and $r > 0$, which depend on $\mb{M}$ and $V$, such that 
        \begin{align}
            \left| \lim_{\eps \to 0^+} R(t, \eps; \varphi) - \int_0^\infty \int_{\R^d} \left[ \bm{\sigma \sigma}^T \mb{M} \cdot (\nabla(e^{s \mathcal{L}_0} \varphi)) \right] \rho_0 \dd \mb{q} \dd s \right| \leq C e^{-rt},
        \end{align}
        holds for all $t > 0$.

        \item (Convergence as $t \to \infty$) For any $\eps > 0$,
        \begin{align}
            \lim_{t \to \infty} R(t, \eps; \varphi) = \dfrac{1}{\eps} \left( \int_{\R^d} \varphi(\mb{q}) \rho^{\eps}_\infty(\mb{q}) \dd \mb{q} - \int_{\R^d} \varphi(\mb{q}) \rho_0(\mb{q}) \dd \mb{q} \right).
        \end{align}

        \item (Convergence as $t \to \infty$, then $\eps \to 0^+$) The following limit exists 
        \begin{align}
            \lim_{\eps \to 0^+} \lim_{t \to \infty} R(t, \eps; \varphi) = \int_0^\infty \int_{\R^d} \left[ \bm{\sigma \sigma}^T \mb{M} \cdot (\nabla(e^{s \mathcal{L}_0} \varphi)) \right] \rho_0 \dd \mb{q} \dd s,
        \end{align}
        or equivalently 
        \begin{align} \label{eq:GKformlua}
            \lim_{\eps \to 0^+} \dfrac{1}{\eps} \left( \int_{\R^d} \varphi(\mb{q}) \rho^{\eps}_\infty(\mb{q}) \dd \mb{q} - \int_{\R^d} \varphi(\mb{q}) \rho_0(\mb{q}) \dd \mb{q} \right) = \int_0^\infty \int_{\R^d} \left[ \bm{\sigma \sigma}^T \mb{M} \cdot (\nabla(e^{s \mathcal{L}_0} \varphi)) \right] \rho_0 \dd \mb{q} \dd s.
        \end{align}
    \end{enumerate}
\end{theorem}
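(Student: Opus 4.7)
The plan is to reduce all four parts to the analysis of a single kernel. Starting from the equation \eqref{eq:FPp} for $\widetilde{\rho^\eps} := \rho^\eps - \rho_0$ and Duhamel's formula, $\widetilde{\rho^\eps}(\cdot,t) = \eps\int_0^t e^{(t-s)\mathcal L^*_\eps}(\nabla\cdot(\rho_0\bm{\sigma\sigma}^T\mb M))\,\dd s$. Pairing with $\varphi$, using the forward--backward duality $\int \varphi\,(e^{\tau\mathcal L^*_\eps}f)\,\dd\mb q = \int (e^{\tau\mathcal L_\eps}\varphi)\,f\,\dd\mb q$, integrating by parts (boundary terms vanish since $\mb M \in C_c^\infty$), and substituting $s \mapsto t-s$ yields
\[
R(t,\eps;\varphi) = \int_0^t F(s,\eps)\,\dd s, \qquad F(s,\eps) := \int_{\R^d}[\bm{\sigma\sigma}^T\mb M\cdot\nabla(e^{s\mathcal L_\eps}\varphi)]\rho_0\,\dd\mb q.
\]
Writing $L(t) := \int_0^t F(s,0)\,\dd s$ and $L(\infty) := \int_0^\infty F(s,0)\,\dd s$, parts (i)--(iv) reduce respectively to: uniform-in-$t$ convergence $R(\cdot,\eps;\varphi)\to L$ as $\eps\to 0^+$; exponential convergence $L(t)\to L(\infty)$; existence of $\lim_{t\to\infty}R(t,\eps;\varphi)$ for fixed $\eps$; and equality of the two iterated limits.

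\textbf{Uniform-in-$\eps$ exponential decay of $F$ (key step).} To prove $|F(s,\eps)| \leq Ce^{-rs}$ with $C,r$ uniform over $\eps \in [0,\eps_0)$, I integrate by parts once more to write $F(s,\eps) = -\int (e^{s\mathcal L_\eps}\varphi)\,g\,\dd\mb q$, where $g := \nabla\cdot(\rho_0\bm{\sigma\sigma}^T\mb M) \in C_c^\infty$ satisfies $\int_{\R^d}g\,\dd\mb q = 0$. By semigroup duality this equals $-\int\varphi\,(e^{s\mathcal L^*_\eps}g)\,\dd\mb q$. Decomposing $g = g_+ - g_-$ with $\|g_+\|_{L^1} = \|g_-\|_{L^1} =: a$ (equal because $\int g = 0$) and applying Proposition \ref{prop:expTV} to the normalized probability densities $g_\pm/a$ gives $\|e^{s\mathcal L^*_\eps}g_\pm - a\,\rho_\infty^\eps\|_{L^1} \leq Ca\,e^{-rs}$. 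The invariant pieces cancel in the difference, leaving $\|e^{s\mathcal L^*_\eps}g\|_{L^1} \leq 2Ca\,e^{-rs}$ with $\eps$-uniform constants, whence $|F(s,\eps)| \leq 2Ca\|\varphi\|_{L^\infty} e^{-rs}$. This mirrors the decomposition used in the proof of Lemma \ref{lmm:eps}.

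\textbf{Parts (i) and (ii).} For pointwise convergence $F(s,\eps) \to F(s,0)$ at each fixed $s$, I use the backward-side Duhamel expansion
\[
e^{s\mathcal L_\eps}\varphi - e^{s\mathcal L_0}\varphi = \eps\int_0^s e^{(s-u)\mathcal L_\eps}(\bm{\sigma\sigma}^T\mb M\cdot\nabla e^{u\mathcal L_0}\varphi)\,\dd u,
\]
which yields an $O(\eps)$ bound on bounded time intervals in a norm suitable for testing against $\bm{\sigma\sigma}^T \mb M \,\rho_0$ (e.g.\ $L^2(\rho_0)$ restricted to the compact support of $\mb M$). Given $\eta > 0$, pick $S$ so that $(2Ca\|\varphi\|_{L^\infty}/r)e^{-rS} < \eta/2$ using the uniform decay, then pick $\eps_0$ small enough that $\int_0^S|F(s,\eps) - F(s,0)|\,\dd s < \eta/2$ (pointwise convergence plus dominated convergence with envelope $\leq 2Ca\|\varphi\|_{L^\infty}e^{-rs}$). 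Summing gives $\sup_{t\geq 0}|R(t,\eps;\varphi) - L(t)| \leq \eta$ for $\eps < \eps_0$, proving uniform-in-$t$ convergence in (i). Part (ii) is immediate from $|L(t) - L(\infty)| \leq \int_t^\infty|F(s,0)|\,\dd s \leq (2Ca\|\varphi\|_{L^\infty}/r)e^{-rt}$.

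\textbf{Parts (iii), (iv), and main obstacle.} For (iii), $R(t,\eps;\varphi) = \eps^{-1}\int\varphi(\rho^\eps(\cdot,t) - \rho_0)\,\dd\mb q$, and Proposition \ref{prop:expTV} directly gives $L^1$-convergence $\rho^\eps(\cdot,t) \to \rho_\infty^\eps$, so the limit is $\eps^{-1}\int\varphi(\rho_\infty^\eps - \rho_0)\,\dd\mb q$. For (iv), the uniform-in-$t$ convergence from (i) together with the existence of $\lim_{t\to\infty}R(t,\eps;\varphi)$ for each $\eps$ from (iii) permits the standard double-limit interchange, giving $\lim_{\eps\to 0^+}\lim_{t\to\infty}R(t,\eps;\varphi) = \lim_{t\to\infty}L(t) = L(\infty)$, which matches the iterated order from (ii). The main technical obstacle I anticipate is the uniform-in-$\eps$ exponential decay step: Proposition \ref{prop:expTV} applies only to probability density initial data, so extending it to the signed mean-zero source $g$ via the $g_\pm$ decomposition requires verifying that the constants $C,r$ depend only on $V$, $\mb M$, and $\bm\sigma$, and not on $\eps$. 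The $\eps$-uniformity in Proposition \ref{prop:expTV} is exactly what makes this transition go through.
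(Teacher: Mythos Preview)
Your proof is correct and takes a genuinely different Duhamel route from the paper. The paper expands $\rho^\eps$ around the \emph{unperturbed} semigroup,
\[
\rho^\eps(\cdot,t)=e^{t\mathcal L_0^*}\rho_0-\eps\int_0^t e^{(t-s)\mathcal L_0^*}\big[\nabla\cdot(\rho^\eps(\cdot,s)\bm{\sigma\sigma}^T\mb M)\big]\,\dd s,
\]
so that after pairing with $\varphi$ the integrand already contains $e^{(t-s)\mathcal L_0}\varphi$ (the target), but at the cost of carrying the unknown $\rho^\eps$ inside. The paper then controls $\rho^\eps-\rho_0$ by splitting time at a cutoff $T_0$ into three pieces (small-time $L^2$ growth from Lemma~\ref{lmm:eps}, large-time $L^1$ relaxation $\rho^\eps\to\rho_\infty^\eps$ from Proposition~\ref{prop:expTV}, and $\|\rho_\infty^\eps-\rho_0\|_{L^1}\to 0$), combined with the $L^\infty(\mathrm{supp}\,\mb M)$ decay of $\nabla e^{s\mathcal L_0}\varphi$ from Lemma~\ref{lmm:est2}. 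Your expansion instead uses the \emph{perturbed} semigroup acting on a \emph{fixed} source, giving the clean one-variable representation $R(t,\eps;\varphi)=\int_0^t F(s,\eps)\,\dd s$. The uniform exponential envelope on $F(s,\eps)$ via the $g_\pm$ decomposition is exactly the mechanism behind Lemma~\ref{lmm:eps}(i), and your tail-plus-head argument replaces the paper's three-term split. The price you pay is an extra step---pointwise convergence $F(s,\eps)\to F(s,0)$ via a second (backward) Duhamel---which the paper avoids since $e^{s\mathcal L_0}\varphi$ appears from the outset. One clarification: when you test the backward Duhamel difference against $g$, use the form $F(s,\eps)-F(s,0)=-\int g\,(e^{s\mathcal L_\eps}\varphi-e^{s\mathcal L_0}\varphi)\,\dd\mb q$ rather than the gradient form, so you only need $L^2(\rho_\infty^\eps)$ contraction of $e^{\tau\mathcal L_\eps}$ together with the $L^2(\rho_0)$ bound on $\nabla e^{u\mathcal L_0}\varphi$ from Lemma~\ref{lmm:est2} and the comparability $\rho_\infty^\eps\asymp\rho_0$ from \eqref{eq:est}; this makes your sketch precise. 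Parts (ii)--(iv) are handled identically in both approaches.
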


Before proving the theorem, we provide a necessary estimate that will be frequently used later.

\begin{lemma} \label{lmm:est2}
  Under Assumption (II), there exist constants $C > 0$ and $r > 0$ that depend on $\varphi$, $\mb{M}$, and $V$, such that for any $\varphi \in C_c^{\infty}(\R^d)$, we have 
    \begin{align}
        \|\nabla (e^{t\mathcal{L}_0}\varphi)\|_{L^2(\rho_0)} \leq C e^{-r t}, \quad  
        \|\nabla (e^{t\mathcal{L}_0}\varphi)\|_{L^{\infty}(\mathrm{supp}(\mb{M}))} \leq C e^{-r t}.
    \end{align}
\end{lemma}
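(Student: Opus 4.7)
The plan is to exploit the self-adjointness of $\mathcal{L}_0$ in $L^2(\rho_0)$, which under Assumption (II) supplies a spectral gap via Poincar\'e's inequality, and then to upgrade the resulting exponential $L^2$ decay first to an $L^2(\rho_0)$ gradient bound (using the Dirichlet form together with Cauchy--Schwarz), and afterwards to a local $L^\infty$ gradient bound on $\mathrm{supp}(\mb M)$ (using interior parabolic regularity).

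Set $u(t) := e^{t\mathcal{L}_0}\varphi$ and $\bar{\varphi} := \int_{\R^d} \varphi \rho_0\,\ud\mb q$. Integration by parts using $\nabla \rho_0 = -\rho_0 \nabla V$ yields the Dirichlet form identity $-\langle \mathcal{L}_0 f, f\rangle_{L^2(\rho_0)} = \int |\bm{\sigma}^T \nabla f|^2 \rho_0 \,\ud\mb q \geq c_0 \|\nabla f\|_{L^2(\rho_0)}^2$, where $c_0 > 0$ is governed by the strict positive-definiteness of $\bm{\sigma}\bm{\sigma}^T$. Assumption (II) then provides a Poincar\'e inequality for $\rho_0$ with some constant $\lambda_1 > 0$ (cf.~\cite{dolbeault2015hypocoercivity}), and a standard Gr\"onwall argument applied to $\tfrac{d}{dt}\|u(t)-\bar\varphi\|_{L^2(\rho_0)}^2 = -2\int |\bm{\sigma}^T\nabla u|^2\rho_0 \,\ud\mb q$ gives $\|u(t)-\bar\varphi\|_{L^2(\rho_0)} \leq e^{-\lambda_1 t}\|\varphi-\bar\varphi\|_{L^2(\rho_0)}$. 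Since $\varphi \in C_c^\infty(\R^d)$ implies $\mathcal{L}_0 \varphi \in L^2(\rho_0)$ with zero $\rho_0$-mean, commuting $\mathcal{L}_0$ with the semigroup and applying the same contraction to $\mathcal{L}_0 \varphi$ yields $\|\mathcal{L}_0 u(t)\|_{L^2(\rho_0)} \leq e^{-\lambda_1 t}\|\mathcal{L}_0\varphi\|_{L^2(\rho_0)}$. Substituting these two bounds into
\[
c_0\|\nabla u(t)\|_{L^2(\rho_0)}^2 \leq -\langle \mathcal{L}_0 u(t), u(t)-\bar\varphi\rangle_{L^2(\rho_0)} \leq \|\mathcal{L}_0 u(t)\|_{L^2(\rho_0)}\,\|u(t)-\bar\varphi\|_{L^2(\rho_0)}
\]
produces $\|\nabla u(t)\|_{L^2(\rho_0)} \leq C e^{-\lambda_1 t}$, which is the first estimate of the lemma.

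For the $L^\infty$ bound on $\mathrm{supp}(\mb M)$, I would fix an open bounded set $B$ with $\mathrm{supp}(\mb M) \Subset B$ and observe that $\tilde u := u - \bar\varphi$ is smooth by the hypoellipticity of Lemma \ref{lmm:smoothOL}, and solves the uniformly parabolic equation $\partial_t \tilde u = \mathcal{L}_0 \tilde u$ with smooth, locally bounded coefficients on $B$. Interior parabolic Schauder/Moser estimates on the cylinder $B \times [t-1,t]$ (for $t \geq 1$) supply $\|\nabla \tilde u(t)\|_{L^\infty(\mathrm{supp}(\mb M))} \leq C_B \|\tilde u\|_{L^2(B \times [t-1,t])}$ with $C_B$ independent of $t$; since $\rho_0$ is bounded below by a positive constant on $B$, the $L^2(B)$ spatial norm is controlled by the $L^2(\rho_0)$ norm, and integration in time yields $\|\tilde u\|_{L^2(B\times[t-1,t])} \leq C e^{-\lambda_1(t-1)}$. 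For $t \in [0,1]$ a uniform bound on $\|\nabla u(t)\|_{L^\infty(\mathrm{supp}(\mb M))}$ follows from the smoothness of the initial data and short-time parabolic regularity. Taking the maximum yields the second bound. The main obstacle is confirming that the interior regularity estimate is applied on a $t$-independent cylinder so that the exponential rate extracted from the spectral-gap step is preserved; this is precisely why the cylinder is chosen of fixed temporal width $1$ immediately preceding time $t$.
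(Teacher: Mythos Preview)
Your argument for the $L^2(\rho_0)$ gradient bound is essentially identical to the paper's: both exploit the Dirichlet form identity, apply the Poincar\'e-based exponential decay to $u-\bar\varphi$ and to $\mathcal{L}_0 u = e^{t\mathcal{L}_0}\mathcal{L}_0\varphi$, and combine via Cauchy--Schwarz.

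For the $L^\infty$ bound on $\mathrm{supp}(\mb M)$ the two proofs diverge. You invoke interior \emph{parabolic} regularity on a unit-width cylinder $B\times[t-1,t]$ and control $\|\nabla\tilde u(t)\|_{L^\infty(\mathrm{supp}(\mb M))}$ by the space--time $L^2$ norm, which inherits exponential decay from the spectral-gap step. The paper instead works at each fixed time and treats $\psi(\cdot,t)$ via the \emph{elliptic} interior estimate \cite[Theorem 9.11]{gilbarg2015elliptic}: $\|\psi(\cdot,t)\|_{W^{2,p}(\mathrm{supp}(\mb M))}\le C\bigl(\|\psi(\cdot,t)\|_{L^p(B)}+\|\mathcal{L}_0\psi(\cdot,t)\|_{L^p(B)}\bigr)$. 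To feed this, the paper uses the maximum principle to obtain uniform-in-$t$ $L^\infty$ bounds for $\psi$ and $\mathcal{L}_0\psi$, and then interpolates $\|\cdot\|_{L^p(B)}^p\le\|\cdot\|_{L^\infty}^{p-2}\cdot c\|\cdot\|_{L^2(\rho_0)}^2$ to inject the exponential decay; Sobolev embedding with $p>d$ finishes. Your route is the more natural one for a parabolic problem and avoids the interpolation trick, at the cost of citing parabolic (rather than elliptic) interior regularity; the paper's route stays within elliptic theory but requires the extra maximum-principle observation to keep the $L^\infty$ factors bounded. Both are valid.
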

This lemma ensures that the integral 
\begin{align*}
    \int_0^\infty \int_{\R^d} \left[\bm{\sigma} \bm{\sigma}^T \mb{M} \cdot (\nabla(e^{s\mathcal{L}_0} \varphi)) \right] \rho_0 \, \dd\mb{q} \, \dd{s}
\end{align*}
appearing in Theorem \ref{thm:LRTO} converges 
\begin{align*}
    \left| \int_0^\infty \int_{\R^d} \left[ \bm{\sigma} \bm{\sigma}^T \mb{M} \cdot (\nabla(e^{s \mathcal{L}_0} \varphi)) \right] \rho_0 \, \dd\mb{q} \, \dd{s} \right| 
    \leq C \int_0^\infty \|\nabla(e^{t\mathcal{L}_0} \varphi)\|_{L^{\infty}(\mathrm{supp}(\mb{M}))} \, \dd{s} 
    \leq C' \int_0^\infty e^{-r s} \, \dd{s} < \infty,
\end{align*}
where $C$ and $C'$ are constants.

\begin{proof}[Proof of Lemma \ref{lmm:est2}]
    Define $\bar{\varphi} := \int_{\R^d} \varphi \rho_0 \, \dd\mb{q}$. Then, we know that $\psi(\mb{q}, t) := e^{t \mathcal{L}_0} (\varphi - \bar{\varphi})$ solves the backward equation 
    \begin{align}\label{ts}
        \dfrac{\partial \psi(\mb{q}, t)}{\partial t} = \mathcal{L}_0 \psi(\mb{q}, t), \quad \psi(\mb{q}, 0) = \varphi(\mb{q}) - \bar{\varphi}.
    \end{align}
    {\blue Notice that $\int_{\R^d} \psi(\mb{q}, 0) \rho_0 \, \dd\mb{q} = 0$. Multiplying  \eqref{ts} by $\rho_0$ and integrating in $[0,t]\times \R^d$, we have for any $t\geq 0$,
    \begin{align}\label{sloC}
        \int_{\R^d} \psi(\mb{q}, t) \rho_0 \, \dd\mb{q} = 0.
    \end{align}
    }
    By Assumption (II), Poincare's inequality holds, and we have exponential convergence 
    \begin{align} \label{exp_decay}
        \|\psi(\cdot, t)\|_{L^2(\rho_0)} \leq C e^{-r t}.
    \end{align}
    Furthermore, let $\eta := \mathcal{L}_0 \psi(\mb{q}, t) = \mathcal{L}_0 e^{t \mathcal{L}_0} (\varphi - \bar{\varphi}) = e^{t \mathcal{L}_0} \mathcal{L}_0 \varphi$. Then $\eta$ solves 
    \begin{align*}
        \dfrac{\partial \eta(\mb{q}, t)}{\partial t} = \mathcal{L}_0 \eta(\mb{q}, t), \quad \eta(\mb{q}, 0) = \mathcal{L}_0 \varphi(\mb{q}).
    \end{align*}
    Since $\int_{\R^d} \mathcal{L}_0 \varphi \rho_0 \, \dd\mb{q} = 0$, we again have exponential convergence 
    \begin{align*}
        \|\mathcal{L}_0 \psi(\cdot, t)\|_{L^2(\rho_0)} \leq C e^{-r t}.
    \end{align*}

    Using the definition of $\mathcal{L}_0$, we compute 
    \begin{align*}
        \int_{\R^d} \rho_0 |\bm{\sigma}^T \nabla (e^{t \mathcal{L}_0} \varphi)|^2 \, \dd\mb{q} 
        &= -\int_{\R^d} \rho_0 \psi(\mb{q}, t) \mathcal{L}_0 \psi(\mb{q}, t) \, \dd\mb{q} \\
        &\leq \|\psi(\cdot, t)\|_{L^2(\rho_0)} \|\mathcal{L}_0 \psi(\cdot, t)\|_{L^2(\rho_0)} \\
        &\leq C e^{-r t}.
    \end{align*}
   Since $\bm{\sigma}$ is nonsingular and constant, we have 
    \begin{align*}
        \|\nabla (e^{t \mathcal{L}_0} \varphi)\|_{L^2(\rho_0)} \leq C e^{-r t}.
    \end{align*}

    Next, we estimate the $L^{\infty}$ norm. We first observe 
    \begin{align*}
        \|\psi(\cdot, t)\|_{L^{\infty}(\R^d)} \leq \|\psi(\cdot, 0)\|_{L^{\infty}(\R^d)}, \quad 
        \|\mathcal{L}_0 \psi(\cdot, t)\|_{L^{\infty}(\R^d)} \leq \|\mathcal{L}_0 \psi(\cdot, 0)\|_{L^{\infty}(\R^d)}.
    \end{align*}
   Then, by the interior estimate from \cite[Theorem 9.11]{gilbarg2015elliptic}, for any $1 < p < \infty$, we have 
    \begin{align} \label{GT}
        \|\psi(\cdot, t)\|_{W^{2, p}(\mathrm{supp}(\mb{M}))} 
        \leq C \left[\|\psi(\cdot, t)\|_{L^p(B)} + \|\mathcal{L} \psi(\cdot, t)\|_{L^p(B)} \right].
    \end{align}
    Here, $B$ is a compact ball such that $\mathrm{supp}(\mb{M}) \subset \subset B$. Since $\rho_0(\mb{q}) = e^{-V(\mb{q})} \geq \frac{1}{c} > 0$ for $\mb{q} \in B$, for some $p > d$, we have 
    \begin{align*}
        \|\psi(\cdot, t)\|_{L^p(B)}^p 
        &\leq \|\psi(\cdot, t)\|_{L^{\infty}(\R^d)}^{p-2} \cdot \int_B c \rho_0 |\psi(\mb{q}, t)|^2 \, \dd\mb{q} 
        \leq c \|\psi(\cdot, 0)\|_{L^{\infty}(\R^d)}^{p-2} \|\psi(\cdot, t)\|_{L^2(\rho_0)}^2, \\
        \|\mathcal{L}_0 \psi(\cdot, t)\|_{L^p(B)}^p 
        &\leq \|\mathcal{L}_0 \psi(\cdot, t)\|_{L^{\infty}(\R^d)}^{p-2} \cdot \int_B c \rho_0 |\mathcal{L}_0 \psi(\mb{q}, t)|^2 \, \dd\mb{q} 
        \leq c \|\mathcal{L}_0 \psi(\cdot, 0)\|_{L^{\infty}(\R^d)}^{p-2} \|\mathcal{L}_0 \psi(\cdot, t)\|_{L^2(\rho_0)}^2.
    \end{align*}
    Combining this with \eqref{GT}, we obtain 
    \begin{align*}
        \|\psi(\cdot, t)\|_{W^{2, p}(\mathrm{supp}(\mb{M}))} \leq C e^{-r t}.
    \end{align*}
    By the embedding $W^{1, p}(B) \subset L^{\infty}(B)$, we conclude 
    \begin{align*}
        \|\nabla(e^{t \mathcal{L}_0} \varphi)\|_{L^{\infty}(\mathrm{supp}(\mb{M}))} 
        \leq \|\nabla(e^{t \mathcal{L}_0} \varphi)\|_{W^{1, p}(\mathrm{supp}(\mb{M}))} 
        \leq \|\psi(\cdot, t)\|_{W^{2, p}(\mathrm{supp}(\mb{M}))} \leq C e^{-r t}.
    \end{align*}
   This completes the proof of the lemma.
\end{proof}

Now we can proceed to prove Theorem \ref{thm:LRTO}.

\begin{proof}[Proof of Theorem \ref{thm:LRTO}]
(i) This is the key step of the proof, and parts (ii) $\sim$ (iv) will follow from it. Recall that $\rho^{\eps}(\mb{q}, t)$ solves the Fokker-Planck equation \eqref{eq:FPpOL}, which can be reformulated as 
\begin{align*}
    \frac{\partial \rho^{\eps}(\mb{q}, t)}{\partial t} 
    = \mathcal{L}^*_{\eps} \rho^{\eps}(\mb{q}, t) 
    = \mathcal{L}_0^* \rho^{\eps}(\mb{q}, t) - \eps \nabla \cdot (\rho^{\eps} \bm{\sigma} \bm{\sigma}^T \mb{M}).
\end{align*}
Thus, by Duhamel's principle, we have 
\begin{align} \label{eq:help9}
    \rho^{\eps}(\mb{q}, t) 
    = e^{t \mathcal{L}_0^*} \rho_0 
    - \eps \int_0^t e^{(t-s) \mathcal{L}_0^*} \left[\nabla \cdot \left(\rho^{\eps}(\mb{q}, s) \bm{\sigma} \bm{\sigma}^T \mb{M}(\mb{q}) \right) \right] \dd s.
\end{align}
Substituting \eqref{eq:help9} into \eqref{def:delta}, we rewrite the response function as 
\begin{align*}
    R(\eps, t; \varphi) 
    = - \int_{\R^d} \int_0^t \varphi(\mb{q}) e^{(t-s) \mathcal{L}_0^*} \left[ \nabla \cdot \left(\rho^{\eps}(\mb{q}, s) \bm{\sigma} \bm{\sigma}^T \mb{M}(\mb{q}) \right) \right] \dd s \, \dd \mb{q}.
\end{align*}
Since $\varphi \in C_c^{\infty}(\R^d)$, the term $\varphi(\mb{q}) e^{(t-s) \mathcal{L}_0^*} \left[\nabla \cdot (\rho^{\eps}(\mb{q}, s) \bm{\sigma} \bm{\sigma}^T \mb{M}(\mb{q}))\right]$ is smooth and has compact support in $\R^d \times [0, t]$, ensuring it is bounded in this domain. By Fubini’s theorem, we get 
\begin{align*}
    R(\eps, t; \varphi) 
    = - \int_0^t \int_{\R^d} \varphi(\mb{q}) e^{(t-s) \mathcal{L}_0^*} \left[\nabla \cdot (\rho^{\eps}(\mb{q}, s) \bm{\sigma} \bm{\sigma}^T \mb{M}(\mb{q})) \right] \dd \mb{q} \, \dd s.
\end{align*}
Integration by parts gives 
\begin{align}
    R(\eps, t; \varphi) 
    = \int_0^t \int_{\R^d} \left[\bm{\sigma} \bm{\sigma}^T \mb{M} \cdot \nabla (e^{(t-s) \mathcal{L}_0} \varphi) \right] \rho^{\eps}(\mb{q}, s) \dd \mb{q} \, \dd s.
\end{align}

Now, for a fixed time $T_0 > 0$ (to be chosen later), for $t > T_0$, we have 
\begin{align*}
   &\left| R (\eps, t; \varphi) - \int_0^t \int_{\R^d} \left[\bm{\sigma} \bm{\sigma}^T \mb{M} \cdot \nabla(e^{(t-s) \mathcal{L}_0} \varphi)\right] \rho_0(\mb{q}, s) \dd \mb{q} \, \dd s \right| \\
    &\quad = \left| \int_0^{T_0} \int_{\R^d} \left[\bm{\sigma} \bm{\sigma}^T \mb{M} \cdot \nabla(e^{(t-s) \mathcal{L}_0} \varphi)\right] (\rho^{\eps}(\mb{q}, s) - \rho_0) \dd \mb{q} \, \dd s \right. \\
    &\qquad \left. + \int_{T_0}^t \int_{\R^d} \left[\bm{\sigma} \bm{\sigma}^T \mb{M} \cdot \nabla(e^{(t-s) \mathcal{L}_0} \varphi)\right] (\rho^{\eps}(\mb{q}, s) - \rho_0) \dd \mb{q} \, \dd s \right|.
\end{align*}
This can be bounded by the sum of  three terms 
\begin{align*}
    \mathrm{I} &= \int_0^{T_0} \|\bm{\sigma} \bm{\sigma}^T \mb{M} \cdot \nabla(e^{(t-s) \mathcal{L}_0} \varphi)\|_{L^2(\rho_0)} \left\|\frac{\widetilde{\rho^{\eps}}(\cdot, s)}{\rho_0} \right\|_{L^2(\rho_0, \mathrm{supp}(\mb{M}))} \dd s, \\
    \mathrm{II} &= \left| \int_{T_0}^t \int_{\R^d} \left[\bm{\sigma} \bm{\sigma}^T \mb{M} \cdot \nabla(e^{(t-s) \mathcal{L}_0} \varphi)\right] (\rho^{\eps}(\mb{q}, s) - \rho_\infty^\eps(\mb{q})) \dd \mb{q} \, \dd s \right|, \\
    \mathrm{III} &= \left| \int_{T_0}^t \int_{\R^d} \left[\bm{\sigma} \bm{\sigma}^T \mb{M} \cdot \nabla(e^{(t-s) \mathcal{L}_0} \varphi)\right] (\rho_0(\mb{q}) - \rho_\infty^\eps(\mb{q})) \dd \mb{q} \, \dd s \right|.
\end{align*}
By Lemma \ref{lmm:eps} and Lemma \ref{lmm:est2}, we know 
\begin{align} \label{eq:I}
    \mathrm{I} \leq C \eps \int_0^{T_0} e^{(s-t) r} s \, \dd s = \eps C_1 (T_0 + 1) e^{r(T_0 - t)},
\end{align}
where $C_1$ is a constant depending on $\mb{M}, \varphi, \bm{\sigma}$, and $V$.
By Proposition \ref{prop:expTV} and Lemma \ref{lmm:est2}, we know 
\begin{equation} \label{eq:II}
\begin{aligned}
\mathrm{II} &\leq \int_{T_0}^t\|\bm\sigma\bm\sigma^T\mb M\cdot\nabla(e^{(t-s)\mathcal{L}_0}\varphi)\|_{L^{\infty}(\mathrm{supp}(\mb M))}\|\rho^{\eps}(\mb q,s)-\rho_\infty^\eps(\mb q)\|_{L^1(\R^d)}\dd s\\
&\leq C'\int_{T_0}^te^{r(s-t)}\cdot e^{-r_1s}\dd s\\
&\leq C_2e^{-r_1T_0}.
\end{aligned}
\end{equation}
where $C'$ and $C_2$ are constants and $r_1 > 0$ depends on $\mb{M}, \varphi, \bm{\sigma}$, and $V$.

Finally, by taking $W(\mb q)=|\mb q|^2$ in \cite[Proposition 3.7.4]{bogachev2015fokker}, $\rho_\infty^{\eps}$ converges to $\rho_0$ as $\eps \to 0^+$ in $L^1(\R^d)$. Thus, by Lemma \ref{lmm:est2}, as $\eps \to 0^+$, we have 
\begin{equation} \label{eq:III} 
\begin{aligned}
\mathrm{III}&\leq \|\rho^{\eps}_\infty-\rho_0\|_{L^1(\R^d)}\cdot\int_{T_0}^t\|\bm\sigma\bm\sigma^T\mb M\cdot\nabla(e^{(t-s)\mathcal{L}_0}\varphi)\|_{L^{\infty}(\mathrm{supp}(\mb M))}\dd s\\
&\leq C''\|\rho^{\eps}_\infty-\rho_0\|_{L^1(\R^d)}\cdot\int_{T_0}^te^{r(s-t)}\dd s\\
&\leq C_3\|\rho^{\eps}_\infty-\rho_0\|_{L^1(\R^d)}\to 0.
\end{aligned}		
\end{equation}
Here, $C_3$ depends on $\mb{M}, \varphi, \bm{\sigma}$, and $V$.

To control the error within any $\delta > 0$, take $T_0$ sufficiently large such that 	$\mathrm{II}\leq C_2e^{-r_1T_0}\leq \delta/3$. Then, choose $\eps_0$ so that $\mathrm{I}\leq C_1\eps_0(T_0+1)\leq \delta/3$ for any $\eps \leq \eps_0$. Finally, select $\eps_1 \leq \eps_0$  so that $\mathrm{III}\leq C_3\|\rho^{\eps}_\infty-\rho_0\|_{L^1(\R^d)}\leq \delta/3$ for all $\eps \leq \eps_1$. 	
Thus, for any $\eps < \eps_1$ and $t > T_0$, by \eqref{eq:I}, \eqref{eq:II}, and \eqref{eq:III}, we have 
\begin{align}
    \left| R(\eps, t; \varphi) - \int_0^t \int_{\R^d} \left[\bm{\sigma} \bm{\sigma}^T \mb{M} \cdot \nabla(e^{(t-s) \mathcal{L}_0} \varphi)\right] \rho_0(\mb{q}) \dd \mb{q} \, \dd s \right| \leq\mathrm{I}+\mathrm{II}+\mathrm{III}  \leq \delta.
\end{align}
For $t \in (0, T_0]$, use \eqref{eq:I} to bound the difference 
\begin{align}
    \left| R(\eps, t; \varphi) - \int_0^t \int_{\R^d} \left[\bm{\sigma} \bm{\sigma}^T \mb{M} \cdot \nabla(e^{(t-s) \mathcal{L}_0} \varphi)\right] \rho_0(\mb{q}) \dd \mb{q} \, \dd s \right| \leq\mathrm{I} \leq \delta / 3.
\end{align}

Thus, the convergence is uniform in $t$, and by a change of variables, we obtain 
\begin{align}
    \int_0^t \int_{\R^d} \left[\bm{\sigma} \bm{\sigma}^T \mb{M} \cdot \nabla(e^{(t-s) \mathcal{L}_0} \varphi)\right] \rho_0(\mb{q}) \dd \mb{q} \, \dd s
    = \int_0^t \int_{\R^d} \left[\bm{\sigma} \bm{\sigma}^T \mb{M} \cdot \nabla(e^{s \mathcal{L}_0} \varphi)\right] \rho_0(\mb{q}) \dd \mb{q} \, \dd s.
\end{align}
This completes the proof of (i).

Next, we prove (ii). By Lemma \ref{lmm:est2}, we know that for $T_2 > T_1 > 0$ 
\begin{align} \label{eq:est2}
\int_{T_1}^{T_2} & \int_{\R^d}[\bm\sigma\bm\sigma^T\mb M\cdot\nabla(e^{s\mathcal{L}_0}\varphi)]\rho_0(\mb q)\dd\mb q\dd s \nonumber \\
& \leq \int_{T_1}^{T_2}\|\bm\sigma\bm\sigma^T\mb M\cdot\nabla(e^{s\mathcal{L}_0}\varphi)\|_{L^{\infty}(\mathrm{supp}(\mb M))}\dd s\leq C(e^{-rT_1}-e^{-rT_2}).
\end{align}
Here $r$ and $C$ are constants in Lemma \ref{lmm:est2} that depend on $\mb M, V, \bm\sigma$ and $\varphi$. 
Therefore, we know that the limit 
\begin{align}
    \lim_{t \to \infty} \int_0^t \int_{\R^d} \left[\bm{\sigma} \bm{\sigma}^T \mb{M} \cdot \nabla(e^{s \mathcal{L}_0} \varphi)\right] \rho_0(\mb{q}) \dd \mb{q} \, \dd s
\end{align}
exists, and by letting $T_2 \to \infty$ in \eqref{eq:est2}, we obtain exponentially fast convergence, proving (ii).

For (iii), use Proposition \ref{prop:expTV} 
\begin{align*}
    \left| \int_{\R^d} \varphi(\mb{q}) (\rho^{\eps}(\mb{q}, t) - \rho_\infty^\eps) \dd \mb{q} \right|
    \leq \|\varphi\|_{L^{\infty}(\R^d)} \cdot \|\rho^{\eps}(\mb{q}, t) - \rho_\infty^\eps\|_{L^1(\R^d)} \leq C e^{-r t}.
\end{align*}
Thus, for any given $\eps$, we have 
\begin{align*}
    \lim_{t \to \infty} R(\eps, t; \varphi) = \frac{1}{\eps} \left( \int_{\R^d} \varphi(\mb{q}) (\rho_\infty^\eps(\mb{q}) - \rho_0(\mb{q})) \dd \mb{q} \right).
\end{align*}
This proves (iii).

Finally, we prove (iv). By the uniform convergence in (i), for any $\eta > 0$, there exists $\eps_0$ such that for all $t > 0$ and $\eps \in (0, \eps_0)$ 
\begin{align*}
    \left| R(t, \eps; \varphi) - \int_0^t \int_{\R^d} \left[\bm{\sigma} \bm{\sigma}^T \mb{M} \cdot \nabla(e^{s \mathcal{L}_0} \varphi)\right] \rho_0 \dd \mb{q} \, \dd s \right| < \eta.
\end{align*}
Taking the limit $t \to \infty$, we get 
\begin{align*}
    \left| \lim_{t \to \infty} R(t, \eps; \varphi) - \int_0^\infty \int_{\R^d} \left[\bm{\sigma} \bm{\sigma}^T \mb{M} \cdot \nabla(e^{s \mathcal{L}_0} \varphi)\right] \rho_0 \dd \mb{q} \, \dd s \right| < \eta.
\end{align*}
This implies 
\begin{align*}
    \lim_{\eps \to 0^+} \lim_{t \to \infty} R(\eps, t; \varphi)
    = \int_0^\infty \int_{\R^d} \left[\bm{\sigma} \bm{\sigma}^T \mb{M} \cdot \nabla(e^{s \mathcal{L}_0} \varphi)\right] \rho_0 \dd \mb{q} \, \dd s,
\end{align*}
or equivalently 
\begin{align*}
    \lim_{\eps \to 0^+} \frac{1}{\eps} \left( \int_{\R^d} \varphi(\mb{q}) \rho_\infty^\eps(\mb{q}) \dd \mb{q} - \int_{\R^d} \varphi(\mb{q}) \rho_0(\mb{q}) \dd \mb{q} \right)
    = \int_0^\infty \int_{\R^d} \left[\bm{\sigma} \bm{\sigma}^T \mb{M} \cdot \nabla(e^{s \mathcal{L}_0} \varphi)\right] \rho_0 \dd \mb{q} \, \dd s.
\end{align*}
\end{proof}
\subsection{The Green-Kubo relation}\label{sec3.2}
Compared to the general linear response formula describing the behavior of the response function $R(\eps,t;\varphi)$ with respect to an external force, the Green-Kubo relation is a special case where the limiting response function is explicitly computed via the stationary auto-correlation function. This auto-correlation function depends only on the correlation of the original unperturbed solution at different times, so it can be used to ``predict" the averaged response for a reversible system after applying a conservative force as the perturbation.

Precisely, consider a special case of linear response theory: the perturbation is also of potential form, i.e., there exists $W \in C_c^{\infty}(\R^d, \R^+)$ such that $\mb{M} = \nabla W$. In this case, the perturbed SDE is also of reversible form, and the invariant measure of \eqref{eq:pertOL} is given by 
\begin{align} \label{eq:rhoe}
	\rho^{\eps}_\infty = \dfrac{1}{Z_\eps} e^{-V(\mb{q}) + \eps W(\mb{q})}, \quad Z_\eps := \int_{\R^d} e^{-V(\mb{q}) + \eps W(\mb{q})} \dd \mb{q}.
\end{align}

By Theorem \ref{thm:LRTO}, taking two special functions as $\varphi = \mathcal{L}_0 g$ and $\mb{M} = \nabla W$, we can rigorously verify the Green-Kubo relation \eqref{eq:GK1}. Specifically, we take a special class of test functions $\varphi$ satisfying $\int \varphi \rho_0 \dd \mb{q} = 0$, and then one can uniquely solve $g$  from $\varphi = \mathcal{L}_0 g$ up to a constant. {\blue 
For this special case, we provide an intuitive proof of the Green-Kubo relation using the semigroup property. Indeed, observe that
\begin{align*}
	\int_{\R^d} \left[g - e^{t \mathcal{L}_0} g \right] (\mathcal{L}_0 W) \rho_0 \dd \mb{q} &= -\int_{\R^d} \int_0^t \frac{\partial}{\partial s} (e^{s \mathcal{L}_0} g) \dd s \, (\mathcal{L}_0 W) \rho_0 \dd \mb{q} \\
	&= -\int_{\R^d} \int_0^t e^{s \mathcal{L}_0} \mathcal{L}_0 g \, \dd s \, (\mathcal{L}_0 W) \rho_0 \dd \mb{q}.
\end{align*}

Since $\int_{\R^d} (\mathcal{L}_0 W) \rho_0 \dd \mb{q} = 0$, and using the result from the proof of \eqref{sloC} with $\psi(\mb{q}, 0) = \mathcal{L}_0 W$, we obtain for $\bar{g}:=\int_{\R^d} g \rho_0 \dd \mb{q}$, that $\bar{g} \int_{\R^d} e^{t \mathcal{L}_0} (\mathcal{L}_0 W) \rho_0 \dd \mb{q} = 0$. Thus, we have
$$
\int_{\R^d} e^{t \mathcal{L}_0} g (\mathcal{L}_0 W) \rho_0 \dd \mb{q} = \int_{\R^d} e^{t \mathcal{L}_0} (g - \bar{g}) (\mathcal{L}_0 W) \rho_0 \dd \mb{q} \leq \left( \int |e^{t \mathcal{L}_0} (g - \bar{g})|^2 \rho_0 \dd \mb{q} \right)^{\frac{1}{2}} \left( \int |\mathcal{L}_0 W|^2 \rho_0 \dd \mb{q} \right)^{\frac{1}{2}} \to 0
$$
as $t \to +\infty$ due to \eqref{exp_decay}. Therefore, the following identity holds:
\begin{equation} \label{324}
	-\int_0^\infty \int_{\R^d} \left[ e^{s \mathcal{L}_0} (\mathcal{L}_0 g) \right] (\mathcal{L}_0 W) \rho_0 \dd \mb{q} \, \dd s = \int_{\R^d} g (\mathcal{L}_0 W) \rho_0 \dd \mb{q} = \int_{\R^d} (\mathcal{L}_0 g) W \rho_0 \dd \mb{q},
\end{equation}
due to the symmetry of $\mathcal{L}_0$.
}

%
%
%
In the theorem below, we provide an alternative proof starting from the linear response theory in Theorem \ref{thm:LRTO}.

\begin{theorem}
	(The Green-Kubo relation) Suppose that Assumptions (I) and (II) hold. Let $W \in C_c^{\infty}(\R^d; \R^d)$ and $\mb{M} = \nabla W$ in \eqref{eq:pertOL}. For any $g \in C_c^{\infty}$, we have 
	\begin{align} \label{eq:GK1}
		\lim_{\eps \to 0^+} \lim_{t \to \infty} R(t, \eps; \mathcal{L}_0 g) = \int_{\R^d} W(\mb{q}) (\mathcal{L}_0 g)(\mb{q}) \rho_0(\mb{q}) \dd \mb{q} = -\int_0^\infty \int_{\R^d} \left[ e^{s \mathcal{L}_0} (\mathcal{L}_0 g) \right] (\mathcal{L}_0 W) \rho_0 \dd \mb{q} \, \dd s.
	\end{align}
\end{theorem}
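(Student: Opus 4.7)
The natural starting point is Theorem \ref{thm:LRTO}(iv) applied with $\varphi = \mathcal{L}_0 g$ and $\mb{M} = \nabla W$, which directly produces
\[
\lim_{\eps \to 0^+} \lim_{t \to \infty} R(t,\eps;\mathcal{L}_0 g) = \int_0^\infty \!\!\int_{\R^d} \bm{\sigma\sigma}^T \nabla W \cdot \nabla\!\bigl(e^{s\mathcal{L}_0}\mathcal{L}_0 g\bigr)\, \rho_0\,\dd\mb{q}\,\dd s.
\]
The task then reduces to converting this gradient-gradient pairing into the two claimed forms via one spatial and one temporal integration by parts. Note that $\mathcal{L}_0 g \in C_c^\infty(\R^d)$ since $g$ is, so Lemma \ref{lmm:est2} applies to $\varphi = \mathcal{L}_0 g$ and gives the decay estimates that will justify every interchange below.

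\textbf{Spatial integration by parts.} From \eqref{Lsys} one has the identity $\rho_0 \mathcal{L}_0 f = \nabla\cdot(\rho_0\bm{\sigma\sigma}^T \nabla f)$, which expresses the symmetry of $\mathcal{L}_0$ on $L^2(\rho_0)$. Applied with $f = e^{s\mathcal{L}_0}\mathcal{L}_0 g$, together with the compact support of $W$ (so no boundary contributions), this yields
\[
\int_{\R^d} \bm{\sigma\sigma}^T \nabla W \cdot \nabla\!\bigl(e^{s\mathcal{L}_0}\mathcal{L}_0 g\bigr)\rho_0\,\dd\mb{q} = -\int_{\R^d} W\,\mathcal{L}_0\!\bigl(e^{s\mathcal{L}_0}\mathcal{L}_0 g\bigr)\rho_0\,\dd\mb{q} = -\int_{\R^d}(\mathcal{L}_0 W)\bigl(e^{s\mathcal{L}_0}\mathcal{L}_0 g\bigr)\rho_0\,\dd\mb{q}.
\]
Integrating in $s$ produces the right-most member of \eqref{eq:GK1}, which establishes the second equality.

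\textbf{Temporal integration by parts.} To recover the first equality, I exploit the semigroup identity $e^{s\mathcal{L}_0}\mathcal{L}_0 g = \partial_s(e^{s\mathcal{L}_0}g)$ and Fubini's theorem to rewrite
\[
-\int_0^\infty\!\!\int_{\R^d}(\mathcal{L}_0 W)\bigl(e^{s\mathcal{L}_0}\mathcal{L}_0 g\bigr)\rho_0\,\dd\mb{q}\,\dd s = -\int_{\R^d}(\mathcal{L}_0 W)\rho_0\,\lim_{T\to\infty}\bigl(e^{T\mathcal{L}_0}g - g\bigr)\dd\mb{q}.
\]
Setting $\bar g := \int_{\R^d} g\rho_0\,\dd\mb{q}$, the Poincar\'e-based bound \eqref{exp_decay} applied to $g - \bar g$ shows that $e^{T\mathcal{L}_0}g \to \bar g$ exponentially in $L^2(\rho_0)$. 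Since $\int_{\R^d}(\mathcal{L}_0 W)\rho_0\,\dd\mb{q} = \int_{\R^d} W\,\mathcal{L}_0^*\rho_0\,\dd\mb{q} = 0$, the constant $\bar g$ contributes nothing, leaving $\int_{\R^d} g(\mathcal{L}_0 W)\rho_0\,\dd\mb{q}$. A final invocation of the symmetry of $\mathcal{L}_0$ on $L^2(\rho_0)$ rewrites this as $\int_{\R^d} W(\mathcal{L}_0 g)\rho_0\,\dd\mb{q}$, which is the first equality in \eqref{eq:GK1}.

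\textbf{Main obstacle.} No genuinely new analysis is needed; the delicate point is simply to justify Fubini and the passage to the limit $T \to \infty$. Both are handled by the estimates already assembled for Theorem \ref{thm:LRTO}: Lemma \ref{lmm:est2} gives $\|\nabla(e^{s\mathcal{L}_0}\mathcal{L}_0 g)\|_{L^\infty(\mathrm{supp}(W))} \leq Ce^{-rs}$, which makes the double integral absolutely convergent on $\mathrm{supp}(\nabla W)$, and the Poincar\'e-based exponential decay of $e^{s\mathcal{L}_0}(g-\bar g)$ in $L^2(\rho_0)$ controls the $T$-limit. The entire argument is therefore a short corollary of Theorem \ref{thm:LRTO} combined with the self-adjointness of $\mathcal{L}_0$.
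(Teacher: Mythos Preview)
Your proposal is correct and the second equality is handled exactly as in the paper: integration by parts in $\mb q$ using the identity $\rho_0\mathcal{L}_0 f = \nabla\cdot(\rho_0\bm{\sigma\sigma}^T\nabla f)$ from \eqref{Lsys}.

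For the first equality, your route differs from the paper's formal proof. The paper exploits the fact that in the gradient case $\mb M=\nabla W$ the perturbed invariant measure is explicit, $\rho_\infty^\eps = Z_\eps^{-1}e^{-V+\eps W}$, and computes the left side of \eqref{eq:GKformlua} directly as a Gateaux derivative via dominated convergence, obtaining $\int \varphi W\rho_0 - (\int\varphi\rho_0)(\int W\rho_0)$; the second term vanishes for $\varphi=\mathcal{L}_0 g$. You instead stay on the right side of \eqref{eq:GKformlua} and integrate in $s$ using $e^{s\mathcal{L}_0}\mathcal{L}_0 g = \partial_s(e^{s\mathcal{L}_0}g)$ together with the $L^2(\rho_0)$ decay \eqref{exp_decay}. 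This is precisely the ``intuitive'' semigroup argument the paper sketches in \eqref{324} just before the theorem, so both approaches are already present in the text. Your version has the minor advantage of never touching the explicit form of $\rho_\infty^\eps$, making it clear that the identity is a pure consequence of Theorem~\ref{thm:LRTO} plus self-adjointness; the paper's Gateaux computation, on the other hand, makes the connection to statistical-mechanical perturbation theory more transparent.
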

Using the conventional notation for the stationary auto-correlation function 
\begin{align*}
	K_{AB}(t) := \int_{\R^d} \left( e^{t \mathcal{L}_0} A \right) B \, \rho_0 \dd \mb{q},
\end{align*}
the above Green-Kubo relation \eqref{eq:GK1} is interpreted as 
\begin{align}
	\lim_{\eps \to 0^+} \lim_{t \to \infty} R(t, \eps; \mathcal{L}_0 g) = -\int_0^{+\infty} K_{\mathcal{L}_0 g, \mathcal{L}_0 W}(t) \dd t.
\end{align}

\begin{proof}
Recall the linear response relation \eqref{eq:GKformlua}. We will recast the L.H.S. and R.H.S. of \eqref{eq:GKformlua} respectively, which in the special case $\varphi = \mathcal{L}_0 g$ and $\mb{M} = \nabla W$ will deduce the so-called Green-Kubo relation \eqref{eq:GK1}. This relation is also known as the fluctuation-dissipation theorem by Gallavotti-Cohen \cite{gallavotti1995dynamical}.

For a given observable $\varphi \in C_c^{\infty}(\R^d)$ and a potential $U \in C^{\infty}(\R^d)$ such that $e^{-U} \in L^1(\R^d)$, define 
\begin{align} \label{eq:functional}
	f(U) := \frac{\int_{\R^d} \varphi(\mb{q}) e^{-U(\mb{q})} \dd \mb{q}}{\int_{\R^d} e^{-U(\mb{q})} \dd \mb{q}}.
\end{align}
Then, the L.H.S. of \eqref{eq:GKformlua} should be viewed as the Gateaux derivative at $U = V$ in the direction $W$, i.e.,
\begin{align}
	\lim_{\eps \to 0^+} \frac{f(V + \eps W) - f(V)}{\eps} = \lim_{\eps \to 0^+} \frac{1}{\eps} \left( \int_{\R^d} \varphi(\mb{q}) \rho^{\eps}_\infty(\mb{q}) \dd \mb{q} - \int_{\R^d} \varphi(\mb{q}) \rho_0(\mb{q}) \dd \mb{q} \right).
\end{align}
By the Dominated Convergence Theorem, direct computation yields 
\begin{equation} \label{eq:LHS}
\begin{aligned}
	\lim_{\eps \to 0^+} \frac{f(V + \eps W) - f(V)}{\eps} &= \int_{\R^d} \varphi(\mb{q}) \left. \frac{\partial}{\partial \eps} \left( \frac{e^{-V(\mb{q}) + \eps W(\mb{q})}}{\int_{\R^d} e^{-V(\mb{q}) + \eps W(\mb{q})} \dd \mb{q}} \right) \right|_{\eps = 0} \dd \mb{q} \\
	&= \int_{\R^d} \varphi(\mb{q}) W(\mb{q}) \rho_0 \dd \mb{q} - \left( \int_{\R^d} \varphi(\mb{q}) \rho_0 \dd \mb{q} \right) \left( \int_{\R^d} W(\mb{q}) \rho_0 \dd \mb{q} \right).
\end{aligned}
\end{equation}
For the case $\varphi = \mathcal{L}_0 g$, we have $\int_{\R^d} (\mathcal{L}_0 g) \rho_0 \dd \mb{q} = 0$, and thus the first equality in \eqref{eq:GK1} holds.

Now, we reformulate the R.H.S. of \eqref{eq:GKformlua}. Substituting $\mb{M} = \nabla W$ into \eqref{eq:GKformlua} yields 
\begin{equation} \label{eq:RHS}
\begin{aligned}
	\int_0^\infty \int_{\R^d} \left[ \bm{\sigma} \bm{\sigma}^T \mb{M} \cdot \nabla(e^{s \mathcal{L}_0} \varphi) \right] \rho_0 \dd \mb{q} \, \dd s &= -\int_0^\infty \int_{\R^d} \left( e^{s \mathcal{L}_0} \varphi \right) \nabla \cdot \left( \rho_0 \bm{\sigma} \bm{\sigma}^T \nabla W \right) \dd \mb{q} \, \dd s \\
	&= -\int_0^\infty \int_{\R^d} \left( e^{s \mathcal{L}_0} \varphi \right) (\mathcal{L}_0 W) \rho_0 \dd \mb{q} \, \dd s,
\end{aligned}
\end{equation}
where the last equality is due to \eqref{Lsys}.
By Theorem \ref{thm:LRTO}, we know that \eqref{eq:RHS} and \eqref{eq:LHS} are equal. Thus, for $g \in C_c^{\infty}(\R^d)$ with $\varphi = \mathcal{L}_0 g$, we derive \eqref{eq:GK1}.
\end{proof}

We point out that literature (for instance, \cite[Chapter 9.3]{Pavliotis}) also rewrites \eqref{eq:GK1} as 
\begin{align} \label{eq:GK2}
\dfrac{1}{2} \int_{\R^d} \Gamma(g,W)(\mb{q}) \rho_0(\mb{q}) \dd \mb{q} &= \int_0^\infty \E\left[(\mathcal{L}_0 g)(\mb{q}_s) (\mathcal{L}_0 W)(\mb{q}_0)\right] \dd s \nonumber \\
&= \int_0^\infty \int_{\R^d} \E^{\mb{q}}\left[(\mathcal{L}_0 g)(\mb{q}_s)\right] (\mathcal{L}_0 W) \rho_0 \dd \mb{q} \dd s \nonumber \\
&= \int_0^\infty \int_{\R^d} e^{s \mathcal{L}_0} (\mathcal{L}_0 g) (\mathcal{L}_0 W) \rho_0 \dd \mb{q} \dd s.
\end{align}
Here, $\mb{q}_s, s \geq 0$ is a trajectory of \eqref{eq:OL1}, and $\Gamma(g,h)$ is the carré du champ operator defined as 
\begin{align}
	\Gamma(g,h) := \mathcal{L}_0(gh) - g \mathcal{L}_0 h - h \mathcal{L}_0 g.
\end{align}
Equations \eqref{eq:GK1} and \eqref{eq:GK2} are regarded as the Green-Kubo relation.

In the following, we provide an example to compute the diffusion coefficients via the Green-Kubo relation. In fact, \eqref{eq:GK2} can be rewritten as 
\begin{align} \label{eq:GK3}
	\int_{\R^d} \rho_0(\mb{q}) (\nabla g)^\top \bm{\sigma} \bm{\sigma}^\top \nabla W \dd \mb{q} = \int_0^\infty \E\left[(\mathcal{L}_0 g)(\mb{q}_s) (\mathcal{L}_0 W)(\mb{q}_0)\right] \dd s.
\end{align}
Though we assume $W$ and $g$ are in $C_c^{\infty}(\R^d)$, by a density argument, \eqref{eq:GK2} or \eqref{eq:GK3} also holds for $W(\mb{q}) = q_j, g(\mb{q}) = q_i$ where $i,j = 1,2,...,d$. Thus, we derive the diffusion coefficients 
\begin{align*}
	(\bm{\sigma} \bm{\sigma}^\top)_{ij} = \int_0^\infty \E\left[(\bm{\sigma}_i \bm{\sigma} \nabla V)(\mb{q}_s) (\bm{\sigma}_j \bm{\sigma} \nabla V)(\mb{q}_0)\right] \dd s,
\end{align*}
where $\bm{\sigma}_j, j = 1,2,...,d$ are the $j$-th row of $\bm{\sigma}$.

\begin{remark}
	As mentioned in \eqref{324}, the Green-Kubo relation could be verified by using the exponential convergence of $e^{s \mathcal{L}_0} \varphi$. This method is independent of linear response theory (see \cite[Result 9.1]{Pavliotis}). The way we verified the Green-Kubo formula is to reveal the intrinsic relationship between it and the linear response theory. Indeed,    Hairer-Majda  directly termed \eqref{eq:GKformlua} as the Green-Kubo relation in \cite{hairer2010simple}.
\end{remark}

\begin{remark}
We also remark that the Green-Kubo relation when studying $\rho^\eps_{\infty}$, relaxed to $\rho_0$ after canceling the perturbation force, can be used to justify the Onsager regression hypothesis \cite{onsager1931reciprocal1, onsager1931reciprocal2}; see also the review article \cite{marconi2008fluctuation}. That is, using the following relation, one can compute (in a reversible way) the original unperturbed auto-correlation function 
\begin{align*}
\lim_{\eps \to 0^+} \dfrac{1}{\eps} \left(\int_{\R^d} \varphi(\mb{q}) \rho_0(\mb{q}) \dd \mb{q} - \int_{\R^d} \varphi(\mb{q}) \rho^{\eps}_\infty(\mb{q}) \dd \mb{q}\right) &= \lim_{\eps \to 0^+} \int_0^\infty \int_{\R^d} \left[e^{s \mathcal{L}_\eps} (\mathcal{L}_\eps g)\right] (\mathcal{L}_\eps W) \rho^{\eps}_\infty(\mb{q}) \dd \mb{q} \dd s \\
&=  \int_0^\infty \int_{\R^d} \left[e^{s \mathcal{L}_0} (\mathcal{L}_0 g)\right] (\mathcal{L}_0 W) \rho_0(\mb{q}) \dd \mb{q} \dd s.
\end{align*}
\end{remark}

\section{The Linear Response Theory for Underdamped Langevin and Generalized Langevin Dynamics}\label{sec4}

{\blue Unlike the overdamped case, the degenerate ellipticity of the underdamped Langevin equation prevents the direct analogy of many results from the overdamped Langevin case. In particular, few results are known for irreversible underdamped Langevin dynamics. Fortunately, Villani's remarkable work on hypocoercivity \cite{villani2009hypocoercivity} has facilitated the analysis of the reversible case, showing that exponential convergence still holds under mild conditions on the potential.

In this section, we consider underdamped Langevin dynamics and study the linear response theory {\blue for the case where the external force perturbation is in a conservative form \(\eps \nabla W\).} The main differences between the underdamped Langevin dynamics and the overdamped case are hypocoercivity and hypoellipticity. Therefore, we will first clarify hypocoercivity and hypoellipticity before proving parallel results for the behavior of the response function \(R(\eps, t; \varphi)\).}

\subsection{Hypocoercivity and hypoellipticity of the kinetic Fokker-Planck equation}

Suppose that \(V(\mb{q}) \in C^{\infty}(\R^d)\) is positive and satisfies Assumptions (I) and (II). Consider the following underdamped Langevin dynamics 
\begin{equation} \label{eq:pertUL}
\left\{
\begin{split}
	&\dd\mb{q}^\eps = \mb{p}^\eps \dd t, \\
	&\dd\mb{p}^{\eps} = -{\bm{\sigma\sigma}}^T\mb{p}^\eps \dd t - \nabla_{\mb{q}} [V(\mb{q}^\eps) - \eps W(\mb{q}^\eps)] \dd t + \sqrt {2} \bm{\sigma} \dd \mb{B}, \\
	&(\mb{q}^{\eps}(0), \mb{p}^{\eps}(0)) \sim \rho_0 := \dfrac{1}{Z} e^{-|\mb{p}|^2/2 - V(\mb{q})}, \quad Z := \int_{\R^{2d}} e^{-|\mb{p}|^2/2 - V(\mb{q})} \dd\mb{q} \dd\mb{p},
\end{split}
\right.
\end{equation}
{\blue where \(W(\mb{q}) \in C_c^{\infty}(\R^d)\) is the potential as in Section \ref{sec3.2} such that the force perturbation is also of the potential form \(\eps \nabla W\).}

Let \(\rho^{\eps}(\mb{q}, \mb{p}, s)\) be the law of \((\mb{q}^{\eps}_s, \mb{p}^{\eps}_s)\). It satisfies the following Fokker-Planck equation 
\begin{align} \label{eq:FPpUL}
\dfrac{\p \rho^{\eps}(\mb{q}, \mb{p}, t)}{\p t} = \mathcal{L^*_{\eps}} \rho(\mb{q}, \mb{p}, t), \quad \rho^{\eps}(\mb{q}, \mb{p}, 0) = \rho_0,
\end{align}
where \(\mathcal{L^*_{\eps}}\) is the perturbed Fokker-Planck operator 
\begin{align}
\mathcal{L^*_{\eps}} \rho := -\mb{p} \cdot \nabla_{\mb{q}} \rho + \nabla_{\mb{q}} [V(\mb{q}) - \eps W(\mb{q})] \cdot \nabla_{\mb{p}} \rho + \nabla_{\mb{p}} \cdot (\rho \bm{\sigma\sigma}^T \mb{p} + \bm{\sigma\sigma}^T \nabla_{\mb{p}} \rho).
\end{align}
The invariant measure of \eqref{eq:FPpUL} is the Gibbs measure 
\begin{align} \label{eq:invp}
\rho_\infty^\eps := \dfrac{1}{Z_\eps} e^{-|\mb{p}|^2/2 - V(\mb{q}) + \eps W(\mb{q})}, \quad Z_\eps := \int_{\R^{2d}} e^{-|\mb{p}|^2/2 - V(\mb{q}) + \eps W(\mb{q})} \dd\mb{q} \dd\mb{p}.
\end{align}

LRT is interested in the limit behavior of the response function given an observable \(\varphi \in C_c^{\infty}(\R^{2d})\), i.e.,
\begin{align} \label{def:deltaU}
R(\eps,t;\varphi) := \dfrac{1}{\eps}\left( \int_{\R^{2d}} \varphi(\mb{q}, \mb{p}) \rho^\eps(\mb{q}, \mb{p}, t) \dd\mb{q} \dd\mb{p} - \int_{\R^{2d}} \varphi(\mb{q}, \mb{p}) \rho_0(\mb{q}, \mb{p}) \dd\mb{q} \dd\mb{p} \right).
\end{align}
As in \eqref{eq:BULE}, the unperturbed backward operator \(\mathcal{L}_0\) is given by 
\begin{align} \label{eq:BO}
\mathcal{L}_0 f := \mb{p} \cdot \nabla_{\mb{q}} f - \nabla_{\mb{q}} V(\mb{q}) \cdot \nabla_{\mb{p}} f - (\bm{\sigma\sigma}^T \mb{p}) \cdot \nabla_{\mb{p}} f + (\bm{\sigma\sigma}^T) : \nabla^2_{\mb{p}} f,
\end{align}
and the unperturbed Fokker-Planck operator \(\mathcal{L}_0^*\) is given by 
\begin{align} \label{eq:UO}
\mathcal{L}_0^* \rho := -\mb{p} \cdot \nabla_{\mb{q}} \rho + \nabla_{\mb{q}} V(\mb{q}) \cdot \nabla_{\mb{p}} \rho + \nabla_{\mb{p}} \cdot (\rho \bm{\sigma\sigma}^T \mb{p} + \bm{\sigma\sigma}^T \nabla_{\mb{p}} \rho).
\end{align}

We now consider the equation of \(p^\eps(\mb{q}, \mb{p}, t) = \dfrac{\rho^\eps(\mb{q}, \mb{p}, t)}{\rho^\eps_\infty(\mb{q}, \mb{p})}\), which is 
\begin{align*} 
\dfrac{\p p^\eps}{\p t} = \mathcal{L}_{\eps,1} p^\eps, \quad p^\eps(\mb{q}, \mb{p}, 0) = \dfrac{\rho_0}{\rho_\infty^\eps},
\end{align*}
where
\begin{align}\label{eq:newFP} 
\mathcal{L}_{\eps,1} f := -\mb{p} \cdot \nabla_{\mb{q}} f + (\nabla_{\mb{q}} V(\mb{q}) - \eps \nabla_{\mb{q}} W(\mb{q})) \cdot \nabla_{\mb{p}} f - (\bm{\sigma\sigma}^T \mb{p}) \cdot \nabla_{\mb{p}} f + (\bm{\sigma\sigma}^T) : \nabla^2_{\mb{p}} f.
\end{align}
We point out that, unlike the overdamped Langevin dynamics, \(\mathcal{L}_{\eps,1}\) is not exactly the same as the generator, since the first two terms in \(\mathcal{L}_0\) form the Liouville operator.

We can prove that \(\mathcal{L}_{\eps,1}\) generates a strongly continuous semigroup of contraction on \(L^2(\rho_\infty^\eps)\).

\begin{lemma}\label{lem:4.1semi} 
	Let  \(V(\mb{q}) \in C^\infty(\R^d;\R)\) satisfy Assumption (I) and \(W(\mb{q}) \in C_c^{\infty}(\R^d;\R)\). Consider equation \eqref{eq:FPpUL}. Then 
	\begin{enumerate}[(i)]
		\item \(\rho_0/\rho_\infty^\varepsilon \in H^k(\rho_\infty^\varepsilon), k = 1, 2, \dots\)
		\item \(\mathcal{L}_{\eps,1}\) in \eqref{eq:newFP} generates a strongly continuous semigroup of contractions in \(L^2(\rho_{\infty}^{\eps})\);
		\item Equation \eqref{eq:FPpUL} admits a unique solution \(\rho^{\varepsilon}(\mb{q}, \mb{p}, t) \in C^1([0, T]; L^2(1/\rho_\infty^\varepsilon))\) for any \(T > 0\).
	\end{enumerate}
\end{lemma}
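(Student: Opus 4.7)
The three items build on one another: (i) is a direct computation on the initial datum; (ii) is the analytic heart of the lemma and produces the semigroup $e^{t\mathcal{L}_{\eps,1}}$; (iii) is essentially a corollary of (i) and (ii).

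For (i), I would directly compute, using \eqref{eq:pertUL} and \eqref{eq:invp}, that
\[
\frac{\rho_0(\mb q,\mb p)}{\rho_\infty^\eps(\mb q,\mb p)} \;=\; \frac{Z_\eps}{Z}\, e^{-\eps W(\mb q)}.
\]
Since $W\in C_c^\infty(\R^d)$, every derivative of this ratio in $(\mb q,\mb p)$ is bounded and compactly supported in $\mb q$ (and constant in $\mb p$). Because $\rho_\infty^\eps$ is a probability density, each weighted norm $\|\partial^\alpha(\rho_0/\rho_\infty^\eps)\|_{L^2(\rho_\infty^\eps)}$ is automatically finite, so $\rho_0/\rho_\infty^\eps\in H^k(\rho_\infty^\eps)$ for every $k$.

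For (ii), the plan is to apply the Lumer--Phillips theorem with core $C_c^\infty(\R^{2d})$. I would first rewrite $\mathcal{L}_{\eps,1}=A_\eps+S_\eps$ with
\[
A_\eps f := -\mb p\cdot\nabla_{\mb q} f + (\nabla_{\mb q} V - \eps\,\nabla_{\mb q} W)\cdot\nabla_{\mb p} f, \qquad
S_\eps f := \frac{1}{\rho_\infty^\eps}\,\nabla_{\mb p}\!\cdot\!\bigl(\bm{\sigma\sigma}^T\rho_\infty^\eps\nabla_{\mb p} f\bigr),
\]
using the identity $\bm{\sigma\sigma}^T\nabla_{\mb p}\rho_\infty^\eps=-\bm{\sigma\sigma}^T\mb p\,\rho_\infty^\eps$ to reconcile this decomposition with \eqref{eq:newFP}. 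For $f\in C_c^\infty$, integration by parts against $\rho_\infty^\eps$, together with the Gibbs form \eqref{eq:invp}, yields the two basic identities
\[
\int (A_\eps f)\,f\,\rho_\infty^\eps\,\dqp = 0, \qquad \int (S_\eps f)\,f\,\rho_\infty^\eps\,\dqp = -\int |\bm\sigma^T\nabla_{\mb p} f|^2\rho_\infty^\eps\,\dqp \le 0,
\]
i.e.\ $A_\eps$ is antisymmetric and $S_\eps$ is symmetric and negative in $L^2(\rho_\infty^\eps)$, giving dissipativity of $\mathcal{L}_{\eps,1}$ on a dense subspace. The remaining ingredient, and the main obstacle, is to verify the range condition: $\mathrm{Range}(\lambda I-\mathcal{L}_{\eps,1})$ is dense in $L^2(\rho_\infty^\eps)$ for some $\lambda>0$. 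This reflects the hypoelliptic (degenerate) nature of the operator and cannot be read off from dissipativity alone. I would establish essential m-dissipativity of $(\mathcal{L}_{\eps,1},C_c^\infty(\R^{2d}))$ by adapting the standard kinetic Fokker--Planck argument à la Helffer--Nier and Villani \cite{villani2009hypocoercivity}: Assumption (I) on $V$ provides the trapping needed to control drifts at infinity, while $\eps\,\nabla W$ is a bounded, compactly supported first-order perturbation that does not spoil the argument. Lumer--Phillips then produces the contraction semigroup $\{e^{t\mathcal{L}_{\eps,1}}\}_{t\ge 0}$ on $L^2(\rho_\infty^\eps)$.

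For (iii), part (i) places the initial datum $p_0:=\rho_0/\rho_\infty^\eps$ in $H^1(\rho_\infty^\eps)$, hence in the domain $D(\mathcal{L}_{\eps,1})$ produced in (ii). Standard $C_0$-semigroup theory then yields a unique classical solution $p^\eps(t)=e^{t\mathcal{L}_{\eps,1}}p_0\in C^1([0,T];L^2(\rho_\infty^\eps))$ of $\partial_t p^\eps=\mathcal{L}_{\eps,1}p^\eps$. Since $f\mapsto f\rho_\infty^\eps$ is an isometric bijection from $L^2(\rho_\infty^\eps)$ onto $L^2(1/\rho_\infty^\eps)$, setting $\rho^\eps:=p^\eps\,\rho_\infty^\eps$ gives the unique $C^1([0,T];L^2(1/\rho_\infty^\eps))$ solution of \eqref{eq:FPpUL}. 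Thus the bulk of the work lies in the range-density step of (ii); once that is in hand, (i) and (iii) are routine.
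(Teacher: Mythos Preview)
Your overall strategy matches the paper's: (i) is the same explicit computation; (ii) uses the same Lumer--Phillips/Hille--Yosida framework with the same splitting into the antisymmetric transport part and the symmetric nonpositive Ornstein--Uhlenbeck part; (iii) is deduced from (i)--(ii) via standard semigroup theory and the isometry $f\mapsto f\rho_\infty^\eps$.

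The one substantive difference is how the range condition in (ii) is obtained. You defer this to ``the standard kinetic Fokker--Planck argument \`a la Helffer--Nier and Villani'', which is a valid route in principle but leaves unspecified which hypotheses on $V$ are actually needed and how Assumption~(I) meets them. The paper is more concrete and self-contained: it first shows, by Lax--Milgram in the $\mb p$-variable alone, that the Ornstein--Uhlenbeck part $-S_\eps$ is maximal monotone on $L^2(\rho_\infty^\eps)$ (this uses only the Gaussian in $\mb p$, not the growth of $V$); it then observes that the transport part $-A_\eps$ is monotone and linear (hence demicontinuous), and invokes a perturbation result (sum of a maximal monotone operator and a demicontinuous monotone operator is maximal monotone, cf.~Barbu) to conclude that $-\mathcal{L}_{\eps,1}$ is maximal monotone. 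This avoids any appeal to hypoelliptic machinery for the mere well-posedness step. A minor point in your (iii): the implication ``$p_0\in H^1(\rho_\infty^\eps)$, hence $p_0\in D(\mathcal{L}_{\eps,1})$'' is not automatic for kinetic operators; the cleaner observation is that $\rho_0/\rho_\infty^\eps=(Z_\eps/Z)e^{-\eps W}$ is a constant plus a $C_c^\infty(\R^{2d})$ function, both manifestly in the domain.
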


By H\"ormander's hypoellipticity Theorem \ref{thm:hypoellipticity}, we can prove that the unique solution of \eqref{eq:FPpUL} is smooth (proof is in Appendix \ref{app:hy}).

\begin{lemma} \label{lmm:smoothUL}
	Let \(\rho^\eps(\mb{q}, \mb{p}, t)\) be the unique solution of \eqref{eq:FPpUL}. Then it is smooth in \(\R^{2d} \times (0, \infty)\).
\end{lemma}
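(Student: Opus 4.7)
The plan is to apply Hörmander's hypoellipticity theorem (Theorem \ref{thm:hypoellipticity}) to the operator $P := \mathcal{L}^*_\eps - \p_t$ on $\R^{2d} \times (0,\infty)$. By Lemma \ref{lem:4.1semi} the distributional equation $P \rho^\eps = 0$ holds on this domain, so hypoellipticity of $P$ immediately forces $\mathrm{sing\,supp}(\rho^\eps) = \mathrm{sing\,supp}(0) = \emptyset$, which is exactly smoothness. The strategy parallels the overdamped case (Lemma \ref{lmm:smoothOL}) deferred to Appendix \ref{app:hy}, but here the diffusion degenerates in the $\mb{q}$-variable, so the bracket condition becomes the nontrivial content.

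First I would recast $P$ into Hörmander's canonical form. Using that $\bm{\sigma} \in \R^{d\times d}$ is constant with columns $\bm{\sigma}_1, \ldots, \bm{\sigma}_d$, the diffusion term factors as $\bm{\sigma\sigma}^T : \nabla_{\mb{p}}^2 = \sum_{k=1}^d (\bm{\sigma}_k \cdot \nabla_{\mb{p}})^2$, while expanding $\nabla_{\mb{p}} \cdot (\rho\,\bm{\sigma\sigma}^T \mb{p}) = \mathrm{tr}(\bm{\sigma\sigma}^T)\,\rho + (\bm{\sigma\sigma}^T \mb{p}) \cdot \nabla_{\mb{p}} \rho$ splits into a zeroth-order constant $c$ and a first-order drift. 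Setting
\begin{align*}
\mb{X}_k := \bm{\sigma}_k \cdot \nabla_{\mb{p}}, \qquad \mb{X}_0 := -\p_t - \mb{p} \cdot \nabla_{\mb{q}} + \bigl[\nabla_{\mb{q}}(V - \eps W) + \bm{\sigma\sigma}^T \mb{p}\bigr] \cdot \nabla_{\mb{p}},
\end{align*}
one obtains $P = \sum_{k=1}^d \mb{X}_k^2 + \mb{X}_0 + c$ with all coefficients in $C^\infty$ thanks to the smoothness of $V$ and $W$.

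The core step is checking the bracket condition at every $(\mb{q},\mb{p},t) \in \R^{2d}\times(0,\infty)$. The family $\{\mb{X}_1,\ldots,\mb{X}_d\}$ already spans all $\p_{p_i}$ directions since the columns of $\bm{\sigma}$ span $\R^d$ by nonsingularity. A direct computation gives
\begin{align*}
[\mb{X}_k, \mb{X}_0] = -\bm{\sigma}_k \cdot \nabla_{\mb{q}} + (\bm{\sigma\sigma}^T \bm{\sigma}_k) \cdot \nabla_{\mb{p}},
\end{align*}
the first term arising from $[\bm{\sigma}_k \cdot \nabla_{\mb{p}}, \mb{p} \cdot \nabla_{\mb{q}}] = \bm{\sigma}_k \cdot \nabla_{\mb{q}}$ and the second from differentiating the momentum-linear piece of the drift; the potential part $\nabla_{\mb{q}}(V-\eps W) \cdot \nabla_{\mb{p}}$ commutes with any pure $\mb{p}$-derivative and contributes nothing. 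Reducing modulo the $\p_{p_i}$ span already captured, the brackets $\{[\mb{X}_k, \mb{X}_0]\}_{k=1}^d$ deliver a full basis of $\nabla_{\mb{q}}$, and $\mb{X}_0$ itself then supplies $\p_t$ modulo these, so the generated Lie algebra spans $\R^{2d+1}$ everywhere. Theorem \ref{thm:hypoellipticity} applies and yields $\rho^\eps \in C^\infty(\R^{2d} \times (0,\infty))$. The only real obstacle is the commutator bookkeeping: one must verify that the spurious $\p_{p_i}$ contributions on the right side of $[\mb{X}_k, \mb{X}_0]$ do not interfere with the $\nabla_{\mb{q}}$ span, which is automatic because they sit in the subspace already generated by the $\mb{X}_k$ themselves.
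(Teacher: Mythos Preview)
Your proposal is correct and follows essentially the same route as the paper: write $\mathcal{L}^*_\eps - \p_t$ in H\"ormander form, observe that the $\mb{X}_k$ span the $\nabla_{\mb{p}}$ directions, compute $[\mb{X}_k,\mb{X}_0]$ to recover the $\nabla_{\mb{q}}$ directions, and read off $\p_t$ from $\mb{X}_0$. The only difference is cosmetic: the paper first reduces to $\bm{\sigma}=\mb{I}$ by a linear change of variables (as in Lemma~\ref{lmm:smoothOL}) and then computes $[\p_{p_i},\mb{X}_0]=-\p_{q_i}+\p_{p_i}$, whereas you keep the general constant $\bm{\sigma}$ and work with $\mb{X}_k=\bm{\sigma}_k\cdot\nabla_{\mb{p}}$ directly; both arrive at the same bracket verification.
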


\subsection{Linear response theory for underdamped Langevin}

According to \cite{dolbeault2015hypocoercivity}, for any \(\varphi \in C_c^{\infty}(\R^{2d})\), the solution to the backward equation
\begin{align}
\dfrac{\p \varphi(\mb{q}, \mb{p}, t)}{\p t} = \mathcal{L}_0 \varphi(\mb{q}, \mb{p}, t), \quad \varphi(\mb{q}, \mb{p}, 0) = \varphi(\mb{q}, \mb{p}),
\end{align}
converges exponentially fast in \(L^2(\rho_0)\), i.e., 
\begin{align} \label{eq:expc}
\|\varphi(\cdot, \cdot, t) - \bar{\varphi}\|_{L^2(\rho_0)} \leq C e^{-\lambda t}.
\end{align}
Here \(\bar{\varphi} := \int_{\R^{2d}} \varphi(\mb{q}, \mb{p}) \rho_0(\mb{q}, \mb{p}) \dd\mb{q} \dd\mb{p}\), \(C > 0\) is a constant depending on \(V, \bm{\sigma}\), and \(\varphi\), and \(r > 0\) is a constant depending on \(V\) and \(\bm{\sigma}\). 

Since \(W\) is of compact support, the conditions of hypocoercivity for \(V\) in Assumption (II) still apply to the perturbed Fokker-Planck equation \cite{dolbeault2015hypocoercivity}. Hence, exponential convergence also holds 
\begin{align} \label{eq:expu}
	\|\rho^\eps(\cdot, \cdot, t) - \rho_\infty^\eps(\cdot, \cdot)\|_{L^2(1/\rho_\infty^\eps)} \leq e^{-r t} \|\rho_0 - \rho^\eps_\infty\|_{L^2(1/\rho_\infty^\eps)}.
\end{align}
Here \(r > 0\) depends only on \(V\) and \(\bm{\sigma}\), but not on \(\eps\); \(C_1 > 0\) depends only on \(V\) and \(\bm{\sigma}\). The reason they do not depend on \(\eps\) is that the perturbation from \(W\) is smooth and of compact support. 

Using these exponential convergence results (as in Lemma \ref{lmm:est2}), we can rigorously verify the linear response theory. 

\begin{theorem} \label{thm:LRTU}
	Let \(V(\mb{q}) \in C^{\infty}(\R^d)\) be positive and satisfy Assumptions (I) and (II). Suppose \(W(\mb{q}) \in C_c^{\infty}(\R^d)\). Let \(\rho^{\eps}(\mb{q}, t)\) be the law of \((\mb{q}^{\eps}_t, \mb{p}^\eps_t)\) in \eqref{eq:pertUL}, and let \(\rho_\infty^\eps(\mb{q})\) be the invariant measure of \eqref{eq:pertUL}. For some \(\varphi \in C_c^{\infty}(\R^{2d})\), consider \(R(t, \eps; \varphi)\) defined in \eqref{def:deltaU}. Then 
	\begin{enumerate}[(i)]
		\item (Convergence as \(\eps \to 0^+\)) For any given \(t > 0\),
		\begin{align} \label{eq:LRTU}
		\lim_{\eps \to 0^+} R(t, \eps; \varphi) = \int_0^t \int_{\R^{2d}} [\nabla_{\mb{q}} W \cdot (\nabla_{\mb{p}}(e^{s \mathcal{L}_0} \varphi))] \rho_0 \dd\mb{q} \dd\mb{p} \dd s.
		\end{align} 
		Moreover, the limit in \eqref{eq:LRTU} holds uniformly for all \(t > 0\); i.e., for any \(\eta > 0\), there exists \(\eps_0\) such that for any \(0 < \eps < \eps_0\) and \(t > 0\),
		\begin{align*}
		\left| R(t, \eps; \varphi) - \int_0^t \int_{\R^{2d}} [\nabla_{\mb{q}} W \cdot (\nabla_{\mb{p}}(e^{s \mathcal{L}_0} \varphi))] \rho_0 \dd\mb{q} \dd\mb{p} \dd s \right| < \eta;
		\end{align*}
		\item (Convergence as \(\eps \to 0^+\), then \(t \to \infty\)) The following limit exists 
		\begin{align} \label{eq:tlim}
		\lim_{t \to \infty} \lim_{\eps \to 0^+} R(t, \eps; \varphi) = \int_0^\infty \int_{\R^{2d}} [\nabla_{\mb{q}} W \cdot (\nabla_{\mb{p}}(e^{s \mathcal{L}_0} \varphi))] \rho_0 \dd\mb{q} \dd\mb{p} \dd s,
		\end{align}
		and the convergence in \(t\) is exponentially fast. That is, there exist constants \(C > 0\) and \(r > 0\) (depending on \(\mb{M}\) and \(V\)) such that
		\begin{align}
		\left| \lim_{\eps \to 0^+} R(t, \eps; \varphi) - \int_0^\infty \int_{\R^{2d}} [\nabla_{\mb{q}} W \cdot (\nabla_{\mb{p}}(e^{s \mathcal{L}_0} \varphi))] \rho_0 \dd\mb{q} \dd\mb{p} \dd s \right| \leq C e^{-r t}
		\end{align}
		holds for all \(t > 0\);
		\item (Convergence as \(t \to \infty\)) For any \(\eps > 0\),
		\begin{align}
		\lim_{t \to \infty} R(t, \eps; \varphi) = \dfrac{1}{\eps} \left( \int_{\R^{2d}} \varphi(\mb{q}) \rho^{\eps}_\infty(\mb{q}) \dd\mb{q} \dd\mb{p} - \int_{\R^{2d}} \varphi(\mb{q}) \rho_0(\mb{q}, t) \dd\mb{q} \dd\mb{p} \right);
		\end{align}
		\item (Convergence as \(t \to \infty\), then \(\eps \to 0^+\)) The following limit exists 
		\begin{align}
		\lim_{\eps \to 0^+} \lim_{t \to \infty} R(t, \eps; \varphi) = \int_0^\infty \int_{\R^{2d}} [\nabla_{\mb{q}} W \cdot (\nabla_{\mb{p}}(e^{s \mathcal{L}_0} \varphi))] \rho_0 \dd\mb{q} \dd\mb{p} \dd s,
		\end{align}
		or equivalently,
		\begin{align} \label{eq:GKU}
		\lim_{\eps \to 0^+} \dfrac{1}{\eps} \left( \int_{\R^{2d}} \varphi(\mb{q}) \rho^{\eps}_\infty(\mb{q}) \dd\mb{q} \dd\mb{p} - \int_{\R^{2d}} \varphi(\mb{q}) \rho_0(\mb{q}) \dd\mb{q} \dd\mb{p} \right) = \int_0^\infty \int_{\R^{2d}} [\nabla_{\mb{q}} W \cdot (\nabla_{\mb{p}}(e^{s \mathcal{L}_0} \varphi))] \rho_0 \dd\mb{q} \dd\mb{p} \dd s.
		\end{align}
	\end{enumerate}
\end{theorem}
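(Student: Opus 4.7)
The plan is to mirror the structure of the proof of Theorem \ref{thm:LRTO}, using hypocoercivity and hypoellipticity as substitutes for Poincaré's inequality and elliptic regularity. First I would recast the perturbed Fokker-Planck equation \eqref{eq:FPpUL} as $\partial_t\rho^\eps = \mathcal{L}_0^*\rho^\eps - \eps\nabla_{\mb{p}}\cdot(\rho^\eps \nabla_{\mb{q}} W)$, noting that $\rho_0$ is invariant under $\mathcal{L}_0^*$, so that by Duhamel's principle
\begin{align*}
\rho^\eps(\mb{q},\mb{p},t) - \rho_0 = -\eps\int_0^t e^{(t-s)\mathcal{L}_0^*}\bigl[\nabla_{\mb{p}}\cdot(\rho^\eps(s)\nabla_{\mb{q}} W)\bigr]\,\dd s.
\end{align*}
Substituting into \eqref{def:deltaU}, invoking Fubini (since $\varphi\in C_c^\infty$), and integrating by parts in $\mb{p}$ (which is legitimate because $\rho^\eps$ is smooth by Lemma \ref{lmm:smoothUL} and $\nabla_{\mb{q}} W$ is compactly supported), I get the starting identity
\begin{align*}
R(\eps,t;\varphi) = \int_0^t\int_{\R^{2d}}\nabla_{\mb{q}} W\cdot\nabla_{\mb{p}}(e^{(t-s)\mathcal{L}_0}\varphi)\,\rho^\eps(\mb{q},\mb{p},s)\,\dd\mb{q}\,\dd\mb{p}\,\dd s.
\end{align*}

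Next I would establish the underdamped analog of Lemma \ref{lmm:est2}. Writing $\psi(\cdot,t):=e^{t\mathcal{L}_0}(\varphi-\bar\varphi)$ with $\bar\varphi=\int\varphi\rho_0\,\dd\mb{q}\,\dd\mb{p}$, the hypocoercivity estimate \eqref{eq:expc} from Dolbeault-Mouhot-Schmeiser actually provides exponential decay in a twisted $H^1(\rho_0)$ norm that dominates $\|\nabla_{\mb{p}}\psi(\cdot,t)\|_{L^2(\rho_0)}$, yielding
\begin{align*}
\|\nabla_{\mb{p}}(e^{t\mathcal{L}_0}\varphi)\|_{L^2(\rho_0)}\leq C e^{-rt}.
\end{align*}
In parallel I need an underdamped version of Lemma \ref{lmm:eps}. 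Because the perturbation is conservative, the invariant measure \eqref{eq:invp} is explicit and $\|\rho_0-\rho_\infty^\eps\|_{L^2(1/\rho_\infty^\eps)}=O(\eps)$ by direct Taylor expansion in $\eps$. Combining this with the hypocoercive contraction \eqref{eq:expu} gives
\begin{align*}
\|\rho^\eps(\cdot,s)-\rho_\infty^\eps\|_{L^2(1/\rho_\infty^\eps)}\leq C\eps e^{-rs},\qquad \|\rho^\eps(\cdot,s)-\rho_0\|_{L^2(1/\rho_\infty^\eps)}\leq C\eps,
\end{align*}
uniformly in $s\geq 0$.

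With these two estimates in hand, the proof of (i) follows the same $T_0$-splitting used for Theorem \ref{thm:LRTO}. I would bound
\begin{align*}
\Bigl|R(\eps,t;\varphi)-\int_0^t\!\!\int_{\R^{2d}}\nabla_{\mb{q}} W\cdot\nabla_{\mb{p}}(e^{(t-s)\mathcal{L}_0}\varphi)\,\rho_0\,\dd\mb{q}\,\dd\mb{p}\,\dd s\Bigr|
\end{align*}
by three pieces $\mathrm{I}+\mathrm{II}+\mathrm{III}$, each controlled via Cauchy-Schwarz in $L^2(\rho_\infty^\eps)$ vs.\ $L^2(1/\rho_\infty^\eps)$: piece $\mathrm{I}$ (time in $[0,T_0]$) uses the uniform-in-$s$ $O(\eps)$ bound on $\rho^\eps-\rho_0$ combined with the exponential decay of $\nabla_{\mb{p}}(e^{(t-s)\mathcal{L}_0}\varphi)$; piece $\mathrm{II}$ (time in $[T_0,t]$, using $\rho^\eps-\rho_\infty^\eps$) uses the exponential relaxation \eqref{eq:expu}; piece $\mathrm{III}$ uses $\|\rho_0-\rho_\infty^\eps\|_{L^2(1/\rho_\infty^\eps)}\to 0$. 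After a change of variable in $s$ one obtains the claimed uniform-in-$t$ limit. Parts (ii)-(iv) then follow mechanically: (ii) by passing to $t\to\infty$ using integrability of $\|\nabla_{\mb{q}} W\cdot\nabla_{\mb{p}}(e^{s\mathcal{L}_0}\varphi)\|_{L^1(\rho_0)}$; (iii) by the $L^2(1/\rho_\infty^\eps)$ contraction of $\rho^\eps(\cdot,t)-\rho_\infty^\eps$; and (iv) from the uniformity in (i).

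The main obstacle is showing that the constants $C,r$ in the two exponential estimates can be chosen independent of $\eps$ for $\eps\in[0,\eps_0)$. For the solution $\rho^\eps$ this requires that hypocoercivity for the perturbed operator $\mathcal{L}^*_\eps$ holds with $\eps$-uniform rate; this is not an immediate corollary of \cite{dolbeault2015hypocoercivity} applied to $V-\eps W$, but because $W\in C_c^\infty$ is a bounded, smooth, compactly supported perturbation of $V$, the DMS weak Poincaré/coercivity conditions are preserved for small $\eps$, yielding uniform constants. A secondary technical point is justifying the integration by parts and Fubini swap globally on $\R^{2d}\times[0,t]$: this is handled by the smoothness from Lemma \ref{lmm:smoothUL} combined with the $L^2(1/\rho_\infty^\eps)$ well-posedness from Lemma \ref{lem:4.1semi}, and the Gaussian-in-$\mb{p}$ decay of $\rho^\eps$ together with compact support of $\varphi$ and $\nabla_{\mb{q}} W$.
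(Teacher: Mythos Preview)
Your proposal is correct and follows the same overall architecture as the paper (Duhamel, integration by parts, exponential decay of $\nabla_{\mb p}(e^{s\mathcal{L}_0}\varphi)$, Cauchy--Schwarz in weighted $L^2$), but it differs from the paper's proof in two places worth noting.

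First, the paper does \emph{not} use a $T_0$-splitting into pieces $\mathrm{I}+\mathrm{II}+\mathrm{III}$ here. Because the perturbation is conservative, the bound $\|\rho^\eps(\cdot,s)-\rho_0\|_{L^2(1/\rho_\infty^\eps)}\leq 2\|\rho_0-\rho_\infty^\eps\|_{L^2(1/\rho_\infty^\eps)}$ is already uniform in $s\geq 0$ (triangle inequality plus \eqref{eq:expu}); the paper simply pulls this factor out of the $s$-integral and bounds the remaining $\int_0^t\|\nabla_{\mb q}W\cdot\nabla_{\mb p}(e^{s\mathcal{L}_0}\varphi)\|_{L^2(\rho_\infty^\eps)}\,\dd s$ by a constant via the exponential decay. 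Your three-term splitting is a carry-over from the overdamped argument, where it was forced by the \emph{linear-in-$t$} bound of Lemma~\ref{lmm:eps}(ii); here it is correct but unnecessary. Second, the paper does not invoke a twisted $H^1(\rho_0)$ hypocoercivity norm to get $\|\nabla_{\mb p}(e^{s\mathcal{L}_0}\varphi)\|_{L^2(\rho_0)}\leq Ce^{-rs}$. Instead it uses only the $L^2$ decay \eqref{eq:expc} and the dissipation identity $\|\bm\sigma^T\nabla_{\mb p}\psi\|_{L^2(\rho_0)}^2=-\langle\psi,\mathcal{L}_0\psi\rangle_{L^2(\rho_0)}\leq\|\psi\|_{L^2(\rho_0)}\|\mathcal{L}_0\psi\|_{L^2(\rho_0)}$, applied to $\psi=e^{s\mathcal{L}_0}(\varphi-\bar\varphi)$ and noting that $\mathcal{L}_0\psi=e^{s\mathcal{L}_0}(\mathcal{L}_0\varphi)$ also decays exponentially since $\int\mathcal{L}_0\varphi\,\rho_0=0$. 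Your $H^1$-route is also valid (and is in fact what the paper uses for the generalized Langevin case via \eqref{eq:expGLE}), but the paper's route here is more elementary since it requires only the $L^2$ statement \eqref{eq:expc}.
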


\begin{proof}
	We first prove (i). By Lemma \ref{lmm:smoothUL}, \(\rho^{\eps}(\mb{q}, \mb{p}, t)\) is smooth. By Duhamel's principle, we have
	\begin{align}
		\rho^{\eps}(\mb{q}, \mb{p}, t) = e^{t\mathcal{L}_0^*} \rho_0 - \eps \int_0^t e^{(t-s)\mathcal{L}_0^*} (\nabla_{\mb{q}} W(\mb{q}) \cdot \nabla_{\mb{p}} \rho^{\eps}(\mb{q}, \mb{p}, s)) \dd s.
	\end{align}
	Remember that \(e^{t\mathcal{L}_0^*} \rho_0 = \rho_0\), so
	\begin{align*}
		R(t, \eps; \varphi) &= \dfrac{1}{\eps} \left( \int_{\R^{2d}} \varphi(\mb{q}, \mb{p}) \rho^\eps(\mb{q}, \mb{p}, t) \dd\mb{q} \dd\mb{p} - \int_{\R^{2d}} \varphi(\mb{q}, \mb{p}) \rho_0(\mb{q}, \mb{p}) \dd\mb{q} \dd\mb{p} \right)\\
		&= - \int_{\R^{2d}} \int_0^t \varphi(\mb{q}, \mb{p}) e^{(t-s)\mathcal{L}_0^*} (\nabla_{\mb{q}} W(\mb{q}) \cdot \nabla_{\mb{p}} \rho^{\eps}(\mb{q}, \mb{p}, s)) \dd s \dd\mb{q} \dd\mb{p}.
	\end{align*}
	Since \(\varphi(\mb{q}, \mb{p}) e^{(t-s)\mathcal{L}_0^*} (\nabla_{\mb{q}} W(\mb{q}) \cdot \nabla_{\mb{p}} \rho^{\eps}(\mb{q}, \mb{p}, s))\) is a smooth function with compact support in \(\R^{2d} \times [0, t]\), Fubini's theorem gives
	\begin{align*}
		R(t, \eps; \varphi) = - \int_0^t \int_{\R^{2d}} \varphi(\mb{q}, \mb{p}) e^{(t-s)\mathcal{L}_0^*} (\nabla_{\mb{q}} W(\mb{q}) \cdot \nabla_{\mb{p}} \rho^{\eps}(\mb{q}, \mb{p}, s)) \dd\mb{q} \dd\mb{p} \dd s.
	\end{align*}
	Integration by parts then yields
	\begin{align*}
		R(t, \eps; \varphi) = \int_0^t \int_{\R^{2d}} [\nabla_{\mb{q}} W \cdot (\nabla_{\mb{p}} (e^{(t-s)\mathcal{L}_0} \varphi))] \rho^\eps(\mb{q}, \mb{p}, s) \dd\mb{q} \dd\mb{p} \dd s.
	\end{align*}
	By \eqref{eq:expu}, we know
	\begin{align*}
		\|\rho^\eps(\cdot, \cdot, s) - \rho_0(\cdot, \cdot)\|_{L^2(1/\rho_\infty^\eps)} &\leq \|\rho^\eps(\cdot, \cdot, s) - \rho_\infty^\eps(\cdot, \cdot)\|_{L^2(1/\rho_\infty^\eps)} + \|\rho_0 - \rho_\infty^\eps\|_{L^2(1/\rho_\infty^\eps)} \\
		&\leq (1 + e^{-r s}) \|\rho_0 - \rho_\infty^\eps\|_{L^2(1/\rho_\infty^\eps)} \\
		&\leq 2 \|\rho_0 - \rho_\infty^\eps\|_{L^2(1/\rho_\infty^\eps)}.
	\end{align*}
	Thus, by Cauchy-Schwarz's inequality, we have
	\begin{equation} \label{eq:help11}
		\begin{aligned}
		&\left| R(t, \eps; \varphi) - \int_0^t \int_{\R^{2d}} [\nabla_{\mb{q}} W \cdot (\nabla_{\mb{p}} (e^{s \mathcal{L}_0} \varphi))] \rho_0 \dd\mb{q} \dd\mb{p} \dd s \right| \\
		&\quad = \left| \int_0^t \int_{\R^{2d}} [\nabla_{\mb{q}} W \cdot (\nabla_{\mb{p}} (e^{(t-s)\mathcal{L}_0} \varphi))] (\rho^\eps(\mb{q}, \mb{p}, s) - \rho_0(\mb{q}, \mb{p})) \dd\mb{q} \dd\mb{p} \dd s \right| \\
		&\quad \leq \int_0^t \|\nabla_{\mb{q}} W \cdot (\nabla_{\mb{p}} (e^{(t-s)\mathcal{L}_0} \varphi))\|_{L^2(\rho^\eps_\infty)} \|\rho^\eps(\mb{q}, \mb{p}, s) - \rho_0(\mb{q}, \mb{p})\|_{L^2(1/\rho_\infty^\eps)} \dd s \\
		&\quad \leq 2 \|\rho_0 - \rho_\infty^\eps\|_{L^2(1/\rho_\infty^\eps)} \cdot \int_0^t \|\nabla_{\mb{q}} W \cdot (\nabla_{\mb{p}} (e^{s \mathcal{L}_0} \varphi))\|_{L^2(\rho^\eps_\infty)} \dd s.
		\end{aligned} 
	\end{equation}
	Because \(W\) has compact support, there exists a constant \(C' > 0\), depending only on \(V, W\), and \(\bm{\sigma}\), such that
	\begin{align}
		\|\nabla_{\mb{q}} W \cdot (\nabla_{\mb{p}} (e^{s \mathcal{L}_0} \varphi))\|_{L^2(\rho^\eps_\infty)} \leq C \|\nabla_{\mb{p}} (e^{s \mathcal{L}_0} \varphi)\|_{L^2(\rho^\eps_\infty)} \leq C' \|\nabla_{\mb{p}} (e^{s \mathcal{L}_0} \varphi)\|_{L^2(\rho_0)}.
	\end{align}
	Notice that \(\nabla_{\mb{p}} (e^{s \mathcal{L}_0} \varphi) = \nabla_{\mb{p}} (e^{s \mathcal{L}_0} (\varphi - \bar{\varphi}))\). Thus, by \eqref{eq:expc}, we have
	\begin{align*}
		\|\bm{\sigma}^T \nabla_{\mb{p}} (e^{s \mathcal{L}_0} \varphi)\|_{L^2(\rho_0)}^2 &= - \int_{\R^{2d}} (e^{s \mathcal{L}_0} (\varphi - \bar{\varphi})) (\mathcal{L}_0 e^{s \mathcal{L}_0} (\varphi - \bar{\varphi})) \rho_0 \dd\mb{q} \dd\mb{p} \\
		&\leq \|e^{s \mathcal{L}_0} (\varphi - \bar{\varphi})\|_{L^2(\rho_0)} \|\mathcal{L}_0 e^{s \mathcal{L}_0} (\varphi - \bar{\varphi})\|_{L^2(\rho_0)} \\
		&\leq C'' e^{-2 \lambda t}.
	\end{align*}
	Here \(C'' > 0\) is a constant depending on \(\varphi, V\), and \(\bm{\sigma}\), and \(\lambda\) is the constant in \eqref{eq:expc}. Since \(\bm{\sigma}\) is non-singular, we have
	\begin{align} \label{eq:exp3}
		\|\nabla_{\mb{p}} (e^{s \mathcal{L}_0} \varphi)\|_{L^2(\rho_0)}^2 \leq C_1 e^{-r t}.
	\end{align}
	Here \(C_1\) is a constant depending on \(\varphi, V\), and \(\bm{\sigma}\). Returning to \eqref{eq:help11}, we obtain
	\begin{align*}
		\left| R(t, \eps; \varphi) - \int_0^t \int_{\R^{2d}} [\nabla_{\mb{q}} W \cdot (\nabla_{\mb{p}} (e^{s \mathcal{L}_0} \varphi))] \rho_0 \dd\mb{q} \dd\mb{p} \dd s \right| &\leq 2 C_2 \|\rho_0 - \rho_\infty^\eps\|_{L^2(1/\rho_\infty^\eps)} \cdot \int_0^t e^{-r s} \dd s \\
		&\leq C_3 \|\rho_0 - \rho_\infty^\eps\|_{L^2(1/\rho_\infty^\eps)}.
	\end{align*}
	Here \(C_3 > 0\) is a constant depending on \(\varphi, V, W\), and \(\bm{\sigma}\). Since \(\rho_0\) and \(\rho_\infty^\eps\) are the Gibbs measures given in \eqref{eq:pertUL} and \eqref{eq:invp} respectively, and \(W\) is of compact support, the dominated convergence theorem directly implies
	\begin{align*}
		\|\rho_0 - \rho_\infty^\eps\|_{L^2(1/\rho_\infty^\eps)} \to 0
	\end{align*}
	as \(\eps \to 0^+\). Since \(C_3 \|\rho_0 - \rho_\infty^\eps\|_{L^2(1/\rho_\infty^\eps)}\) is independent of \(t\) and approaches 0 as \(\eps \to 0^+\), (i) is proved, and the convergence is uniform in \(t\).
	
	Now we proceed to prove (ii). By \eqref{eq:exp3} and Cauchy-Schwarz's inequality, for any \(T_1, T_2 > 0\), we have
	\begin{align*}
		\left| \int_{T_1}^{T_2} \int_{\R^{2d}} [\nabla_{\mb{q}} W \cdot (\nabla_{\mb{p}} (e^{s \mathcal{L}_0} \varphi))] \rho_0 \dd\mb{q} \dd\mb{p} \dd s \right| &\leq \|\nabla_{\mb{q}} W\|_{L^2(\rho_0)} \cdot \int_{T_1}^{T_2} e^{-s r} \dd s \\
		&\leq C (e^{-T_1 r} - e^{-T_2 r}).
	\end{align*}
	Here \(C\) is a constant depending on \(V, W\), and \(\bm{\sigma}\). Thus, the limit in \eqref{eq:tlim} exists and converges exponentially fast in \(t\).
	
	Next, we prove (iii). This follows immediately from \eqref{eq:expu}. For any given \(\eps > 0\), we have
	\begin{align*}
		& \left| \dfrac{1}{\eps} \left( \int_{\R^{2d}} \varphi(\mb{q}, \mb{p}) \rho^{\eps}_\infty(\mb{q}, \mb{p}) \dd\mb{q} \dd\mb{p} - \int_{\R^{2d}} \varphi(\mb{q}, \mb{p}) \rho^{\eps}(\mb{q}, \mb{p}, t) \dd\mb{q} \dd\mb{p} \right) \right| \\
		&\quad \leq \dfrac{1}{\eps} \|\varphi\|_{L^2(1/\rho^\eps_\infty)} \|\rho^{\eps}(\cdot, \cdot, t) - \rho_\infty^\eps(\cdot, \cdot)\|_{L^2(1/\rho^\eps_\infty)} \\
		&\quad \leq C_1 e^{-r_1 t} / \eps.
	\end{align*}
	Here \(C_1 > 0\) is a constant depending on \(\varphi, V, W\), and \(\bm{\sigma}\), and \(r_1 > 0\) is in \eqref{eq:expu}. Thus, the limit as \(t \to \infty\) exists for each \(\eps > 0\).
	
	Finally, we prove (iv). By (i), we know that for any \(\eta > 0\), there exists \(\eps_0 > 0\) such that for all \(\eps \in (0, \eps_0)\), we have
	\begin{align*}
		\left| R(t, \eps; \varphi) - \int_0^t \int_{\R^{2d}} [\nabla_{\mb{q}} W \cdot (\nabla_{\mb{p}} (e^{s \mathcal{L}_0} \varphi))] \rho_0 \dd\mb{q} \dd\mb{p} \dd s \right| < \eta.
	\end{align*}
	Passing to the limit as \(t \to \infty\), we obtain
	\begin{align*}
		\left| \lim_{t \to \infty} R(t, \eps; \varphi) - \int_0^\infty \int_{\R^{2d}} [\nabla_{\mb{q}} W \cdot (\nabla_{\mb{p}} (e^{s \mathcal{L}_0} \varphi))] \rho_0 \dd\mb{q} \dd\mb{p} \dd s \right| \leq \eta.
	\end{align*}
	Thus, by definition,
	\begin{align*}
		\lim_{\eps \to 0^+} \lim_{t \to \infty} R(t, \eps; \varphi) = \int_0^\infty \int_{\R^{2d}} [\nabla_{\mb{q}} W \cdot (\nabla_{\mb{p}} (e^{s \mathcal{L}_0} \varphi))] \rho_0 \dd\mb{q} \dd\mb{p} \dd s.
	\end{align*}
	By (iii), this also implies \eqref{eq:GKU}. This completes the proof.
\end{proof}

\subsection{Linear Response for the Generalized Langevin with Memory}\label{sec4.2}

The generalized Langevin equation with a general algebraic memory kernel was introduced in \cite{kubo1966fluctuation, mori1965continued}. Recent works \cite{kou2004generalized, li2017fractional} have extended the fluctuation-dissipation relation for both overdamped and underdamped generalized Langevin dynamics.

When the memory kernel takes an exponential form, the generalized Langevin dynamics can be transformed into classical underdamped Langevin dynamics by introducing an additional variable \cite{kupferman2004fractional, pavliotis2010asymptotic}. As an application of Theorem \ref{thm:LRTU}, we consider the generalized Langevin equation with an exponential memory kernel, i.e., \eqref{eq:GLE}  
\begin{equation*} 
\left\{\begin{split}
\ddot{\mb{q}}_t &= -\nabla_{\mb{q}} V(\mb{q}) - \sum_{i=1}^n \mb{A}_i \mb{A}_i^T \int_{0}^t e^{-\alpha_i (t-s)} \dot{\mb{q}}_s \dd s + \mb{A}_i \mb{f}_t^i, \\
\dd\mb{f}_t^i &= -\alpha_i \mb{f}_t^i \dd t + \sqrt{2\beta^{-1} \alpha_i} \dd \mb{B}_t^i, \quad i=1, 2, \dots, n.
\end{split}\right.
\end{equation*}

Here, \(V \in C^{\infty}(\R^d)\) satisfies Assumptions (I) and (II) with \(\bm{\sigma} = \mb{I}_d\), and for each \(i=1, 2, \dots, n\), \(\mb{A}_i \in \R^{d \times d}\) is a constant matrix, \(\alpha_i > 0\) is a constant, and \(\mb{B}_t^i \in \R^d\) is a standard Brownian motion. The \(\mb{B}_t^i\)'s are independent. Furthermore, we assume that the Hessian of \(V\) is uniformly bounded, i.e., there exists a constant \(C > 0\) such that for all \(\mb{q} \in \R^d\),
\begin{align} \label{eq:quad}
\|\nabla^2 V(\mb{q})\| \leq C.
\end{align}
Here, the norm refers to the Frobenius norm.

Let \(\mb{z}_t^i = -\mb{A}_i^T \int_0^t e^{-\alpha_i (t-s)} \dot{\mb{q}}_s \dd s + \mb{f}_t^i\) and \(\dot{\mb{q}} = \mb{p}\). Then \eqref{eq:GLE} can be reformulated as \eqref{eq:RGLE}, i.e.,
\begin{equation*} 
\left\{\begin{split}
\dd{\mb{q}} &= \mb{p} \dd t, \\
\dd{\mb{p}} &= \left(-\nabla_{\mb{q}} V(\mb{q}) + \sum_{i=1}^n \mb{A}_i \mb{z}_i \right) \dd t, \\
\dd{\mb{z}}_i &= -(\alpha_i \mb{z}_i + \mb{A}_i^T \mb{p}) \dd t + \sqrt{2 \beta^{-1} \alpha_i} \dd \mb{B}_i, \quad i=1, 2, \dots, n.
\end{split}\right.
\end{equation*}

For simplicity and without loss of generality, we consider the case \(n=1\) and \(\mb{A}_1 = \mb{I}_d\), i.e. \eqref{eq:1GLE}. The conclusions in this section can be easily extended to the general case.

Now based on \eqref{eq:1GLE}, we impose a compact and smooth perturbation on the potential term, i.e., for \(W \in C_c^{\infty}(\R^d)\), and consider the following perturbed SDE 
\begin{equation} \label{eq:pGLE}
\left\{
\begin{aligned}
&\dd{\mb{q}^\eps} = \mb{p}^\eps \dd t, \\
&\dd{\mb{p}^\eps} = \left(-\nabla_{\mb{q}} V(\mb{q}^\eps) + \eps \nabla_{\mb{q}} W(\mb{q}^\eps) + \mb{z}^\eps \right) \dd t, \\
&\dd{\mb{z}^\eps} = -(\alpha \mb{z}^\eps + \mb{p}^\eps) \dd t + \sqrt{2 \beta^{-1} \alpha} \dd \mb{B}, \\
&(\mb{q}^\eps, \mb{p}^\eps, \mb{z}^\eps) \sim \rho_0.
\end{aligned}
\right.
\end{equation}

The invariant measure of \eqref{eq:pGLE} is given by 
\begin{align}
\rho_\infty^\eps (\mb{q}, \mb{p}, \mb{z}) := \dfrac{1}{Z^\eps} \exp(-\beta H(\mb{q}, \mb{p}, \mb{z}) + \eps \beta W(\mb{q})), \quad Z^\eps := \int_{\R^{3d}} e^{-\beta H(\mb{q}, \mb{p}, \mb{z}) + \eps \beta W(\mb{q})} \dd\mb{q} \dd\mb{p} \dd\mb{z} < \infty.
\end{align}

Let \(\rho^\eps(\mb{q}, \mb{p}, \mb{z}, t)\) be the law of \((\mb{q}^\eps(t), \mb{p}^\eps(t), \mb{z}^\eps(t))\) in \eqref{eq:1GLE}. Then \(\rho^\eps(\mb{q}, \mb{p}, \mb{z}, t)\) satisfies the following perturbed Fokker-Planck equation with initial value \(\rho^\eps(\mb{q}, \mb{p}, \mb{z}, 0) = \rho_0(\mb{q}, \mb{p}, \mb{z})\) 
\begin{equation}
\begin{aligned}
\dfrac{\p \rho^\eps}{\p t} = \mathcal{L}_\eps^* \rho, \quad \mathcal{L}_\eps^* \rho := & -\mb{p} \cdot \nabla_{\mb{q}} \rho + \nabla_{\mb{q}} (V(\mb{q}) - \eps W(\mb{q})) \cdot \nabla_{\mb{p}} \rho + (\mb{p} \cdot \nabla_{\mb{z}} \rho - \mb{z} \cdot \nabla_{\mb{p}} \rho) \\
& + \alpha \nabla_{\mb{z}} \cdot \left(\mb{z} \rho + \dfrac{1}{\beta} \nabla_{\mb{z}} \rho \right).
\end{aligned}
\end{equation}

Consider \(\varphi \in C_c^{\infty}(\R^{3d})\). The linear response theory focuses on the limit behavior of
\begin{align} \label{def:deltaG}
R(\eps, t; \varphi) := \dfrac{1}{\eps} \left( \int_{\R^{3d}} \varphi(\mb{q}, \mb{p}, \mb{z}) \rho^\eps(\mb{q}, \mb{p}, \mb{z}, t) \dd \mb{q} \dd \mb{p} \dd \mb{z} - \int_{\R^{3d}} \varphi(\mb{q}, \mb{p}, \mb{z}) \rho_0(\mb{q}, \mb{p}, \mb{z}) \dd \mb{q} \dd \mb{p} \dd \mb{z} \right).
\end{align}
The unperturbed backward operator \(\mathcal{L}_0\) is given by 
\begin{align}
\mathcal{L}_0 f := \mb{p} \cdot \nabla_{\mb{q}} f - \nabla_{\mb{q}} V(\mb{q}) \cdot \nabla_{\mb{p}} f - (\mb{p} \cdot \nabla_{\mb{z}} f - \mb{z} \cdot \nabla_{\mb{p}} f) - \alpha \mb{z}^T \nabla_{\mb{z}} f + \dfrac{\alpha}{\beta} \Delta_{\mb{z}} f.
\end{align}
The backward equation with initial value \(\varphi\) is then written as 
\begin{align} \label{eq:BGLE1}
\dfrac{\p \varphi(\mb{q}, \mb{p}, \mb{z}, t)}{\p t} = \mathcal{L}_0 \varphi(\mb{q}, \mb{p}, \mb{z}, t), \quad \varphi(\mb{q}, \mb{p}, \mb{z}, 0) = \varphi(\mb{q}, \mb{p}, \mb{z}).
\end{align}

According to the hypocoercivity result in \cite{hairer2008ballistic}, under Assumptions (I), (II), and \eqref{eq:quad} on the smooth potential \(V\), the solution to \eqref{eq:BGLE1} converges exponentially in the weighted Sobolev space \(H^1(\rho_0)\) 
\begin{align} \label{eq:expGLE}
\left\|\varphi(\cdot, \cdot, \cdot, t) - \int_{\R^{3d}} \varphi(\mb{q}, \mb{p}, \mb{z}) \rho_0 \dd \mb{q} \dd \mb{p} \dd \mb{z} \right\|_{H^1(\rho_0)} \leq e^{-\lambda t} \left\|\varphi(\cdot, \cdot, \cdot, 0) - \int_{\R^{3d}} \varphi(\mb{q}, \mb{p}, \mb{z}) \rho_0 \dd \mb{q} \dd \mb{p} \dd \mb{z} \right\|_{H^1(\rho_0)}.
\end{align}

We emphasize that \eqref{eq:expGLE} is stronger than the result used in the underdamped Langevin case, which focuses on exponential convergence in \(L^2(\rho_0)\), while \eqref{eq:expGLE} concerns \(H^1(\rho_0)\). With \eqref{eq:expGLE}, we can rigorously verify the LRT for the generalized Langevin dynamics.
\begin{theorem} \label{thm:LRTG}
	Let \(V(\mb{q}) \in C^{\infty}(\R^d)\) be positive, satisfying Assumptions (I), (II), and \eqref{eq:quad}. Suppose \(W(\mb{q}) \in C_c^{\infty}(\R^d)\). Let \(\rho^{\eps}(\mb{q}, \mb{p}, \mb{z}, t)\) be the law of \((\mb{q}^{\eps}(t), \mb{p}^\eps(t), \mb{z}^{\eps}(t))\) in \eqref{eq:pGLE}, and let \(\rho_\infty^\eps(\mb{q}, \mb{p}, \mb{z})\) be the invariant measure of \eqref{eq:pGLE}. For some \(\varphi \in C_c^{\infty}(\R^{3d})\), consider \(R(t, \eps; \varphi)\) defined in \eqref{def:deltaG}. Then 
	\begin{enumerate}[(i)]
		\item (Convergence as \(\eps \to 0^+\)) For any given \(t > 0\),
		\begin{align} \label{eq:LRTG}
		\lim_{\eps \to 0^+} R(t, \eps; \varphi) = \int_0^t \int_{\R^{3d}} [\nabla_{\mb{q}} W \cdot (\nabla_{\mb{p}} (e^{s \mathcal{L}_0} \varphi))] \rho_0 \dd \mb{q} \dd \mb{p} \dd \mb{z} \dd s.
		\end{align}
		Moreover, the limit in \eqref{eq:LRTG} holds uniformly for all \(t > 0\); i.e., for any \(\eta > 0\), there exists \(\eps_0\) such that for any \(0 < \eps < \eps_0\) and \(t > 0\),
		\begin{align*}
		\left| R(t, \eps; \varphi) - \int_0^t \int_{\R^{3d}} [\nabla_{\mb{q}} W \cdot (\nabla_{\mb{p}} (e^{s \mathcal{L}_0} \varphi))] \rho_0 \dd \mb{q} \dd \mb{p} \dd \mb{z} \dd s \right| < \eta.
		\end{align*}
		
		\item (Convergence as \(\eps \to 0^+\) then \(t \to \infty\)) The following limit exists 
		\begin{align} \label{eq:tlimG}
		\lim_{t \to \infty} \lim_{\eps \to 0^+} R(t, \eps; \varphi) = \int_0^\infty \int_{\R^{3d}} [\nabla_{\mb{q}} W \cdot (\nabla_{\mb{p}} (e^{s \mathcal{L}_0} \varphi))] \rho_0 \dd \mb{q} \dd \mb{p} \dd \mb{z} \dd s,
		\end{align}
		and the convergence in \(t\) is exponentially fast; i.e., there exist constants \(C > 0\) and \(r > 0\) that depend on \(W\) and \(V\) such that
		\begin{align}
		\left| \lim_{\eps \to 0^+} R(t, \eps; \varphi) - \int_0^\infty \int_{\R^{3d}} [\nabla_{\mb{q}} W \cdot (\nabla_{\mb{p}} (e^{s \mathcal{L}_0} \varphi))] \rho_0 \dd \mb{q} \dd \mb{p} \dd \mb{z} \dd s \right| \leq C e^{-rt}
		\end{align}
		holds for all \(t > 0\).
		
		\item (Convergence as \(t \to \infty\)) For any \(\eps > 0\),
		\begin{align}
		\lim_{t \to \infty} R(t, \eps; \varphi) = \dfrac{1}{\eps} \left( \int_{\R^{3d}} \varphi(\mb{q}, \mb{p}, \mb{z}) \rho^{\eps}_\infty(\mb{q}, \mb{p}, \mb{z}) \dd \mb{q} \dd \mb{p} \dd \mb{z} - \int_{\R^{3d}} \varphi(\mb{q}, \mb{p}, \mb{z}) \rho_0(\mb{q}, \mb{p}, \mb{z}) \dd \mb{q} \dd \mb{p} \dd \mb{z} \right).
		\end{align}
		
		\item (Convergence as \(t \to \infty\) then \(\eps \to 0^+\)) The following limit exists 
		\begin{align}
		\lim_{\eps \to 0^+} \lim_{t \to \infty} R(t, \eps; \varphi) = \int_0^\infty \int_{\R^{3d}} [\nabla_{\mb{q}} W \cdot (\nabla_{\mb{p}} (e^{s \mathcal{L}_0} \varphi))] \rho_0 \dd \mb{q} \dd \mb{p} \dd \mb{z} \dd s,
		\end{align}
		or equivalently,
		\begin{equation} \label{eq:GKG}
		\begin{aligned} 
	 \lim_{\eps \to 0^+}& \dfrac{1}{\eps} \left( \int_{\R^{3d}} \varphi(\mb{q}, \mb{p}, \mb{z}) \rho^{\eps}_\infty(\mb{q}, \mb{p}, \mb{z}) \dd \mb{q} \dd \mb{p} \dd \mb{z} - \int_{\R^{3d}} \varphi(\mb{q}, \mb{p}, \mb{z}) \rho_0(\mb{q}, \mb{p}, \mb{z}) \dd \mb{q} \dd \mb{p} \dd \mb{z} \right) \\
		& = \int_0^\infty \int_{\R^{3d}} [\nabla_{\mb{q}} W \cdot (\nabla_{\mb{p}} (e^{s \mathcal{L}_0} \varphi))] \rho_0 \dd \mb{q} \dd \mb{p} \dd \mb{z} \dd s.
		\end{aligned}
		\end{equation}
	\end{enumerate}
\end{theorem}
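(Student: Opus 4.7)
The plan is to follow the proof of Theorem \ref{thm:LRTU} almost verbatim, with the crucial substitution that the hypocoercivity estimate for the unperturbed backward semigroup is the $H^1(\rho_0)$ decay \eqref{eq:expGLE} rather than an $L^2(\rho_0)$ estimate. First I would write $\mathcal{L}_\eps^* = \mathcal{L}_0^* - \eps \nabla_{\mb{q}} W \cdot \nabla_{\mb{p}}$, use stationarity $e^{t\mathcal{L}_0^*}\rho_0 = \rho_0$, and apply Duhamel's principle:
\begin{align*}
\rho^\eps(\mb{q},\mb{p},\mb{z},t) = \rho_0 - \eps \int_0^t e^{(t-s)\mathcal{L}_0^*}\bigl(\nabla_{\mb{q}} W \cdot \nabla_{\mb{p}} \rho^\eps(\cdot,s)\bigr)\,\ud s.
\end{align*}
Substituting into \eqref{def:deltaG}, applying Fubini (justified because $\varphi \in C_c^\infty$ and the integrand is smooth and integrable on bounded time intervals by Lemma \ref{lem:4.1semi}-style smoothing), and integrating by parts in $\mb{p}$ yields the representation
\begin{align*}
R(t,\eps;\varphi) = \int_0^t \int_{\R^{3d}} \bigl[\nabla_{\mb{q}} W \cdot \nabla_{\mb{p}}(e^{(t-s)\mathcal{L}_0}\varphi)\bigr] \rho^\eps(\mb{q},\mb{p},\mb{z},s)\,\dqpz\,\ud s.
\end{align*}
After a change of variable $s \mapsto t-s$, the target limit in \eqref{eq:LRTG} is recovered by replacing $\rho^\eps(\cdot,s)$ by $\rho_0$; the difference is controlled via Cauchy–Schwarz by the product of $\sup_{s\geq 0}\|\rho^\eps(\cdot,s) - \rho_0\|_{L^2(1/\rho^\eps_\infty)}$ and $\int_0^\infty \|\nabla_{\mb{q}} W \cdot \nabla_{\mb{p}}(e^{s\mathcal{L}_0}\varphi)\|_{L^2(\rho^\eps_\infty)}\,\ud s$.

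The first factor is handled exactly as in the underdamped case: the triangle inequality together with the analog of \eqref{eq:expu} for the augmented perturbed dynamics (which applies because $\eps W$ is a smooth, compactly supported perturbation of the trapping potential $V$, so the hypocoercivity framework of \cite{hairer2008ballistic} applies uniformly in $\eps$ small) and the bound $\|\rho_0 - \rho^\eps_\infty\|_{L^2(1/\rho^\eps_\infty)} \to 0$ as $\eps \to 0^+$ by dominated convergence on the explicit Gibbs measures. For the second factor, since $\nabla_{\mb{q}} W$ has compact support and $\rho^\eps_\infty/\rho_0$ is uniformly bounded there for $\eps \leq \eps_0$, it suffices to bound $\|\nabla_{\mb{p}}(e^{s\mathcal{L}_0}\varphi)\|_{L^2(\rho_0)}$. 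Writing $\psi(\cdot,s) := e^{s\mathcal{L}_0}(\varphi - \bar\varphi)$ with $\bar\varphi := \int \varphi\,\rho_0$ and noting that $\nabla_{\mb{p}} \psi$ is one component of $\nabla_{\mb{q},\mb{p},\mb{z}}\psi$ (and hence controlled by the full $H^1(\rho_0)$ norm of $\psi$), the hypocoercivity estimate \eqref{eq:expGLE} directly gives $\|\nabla_{\mb{p}}(e^{s\mathcal{L}_0}\varphi)\|_{L^2(\rho_0)} \leq Ce^{-\lambda s}$, whose time integral converges. Combining these two estimates yields uniform-in-$t$ convergence, proving (i).

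For parts (ii)–(iv), the arguments transfer without modification from Theorem \ref{thm:LRTU}: the exponentially fast convergence in part (ii) follows from the tail estimate $\int_{T_1}^{T_2} \|\nabla_{\mb{p}}(e^{s\mathcal{L}_0}\varphi)\|_{L^2(\rho_0)}\,\ud s \leq C(e^{-\lambda T_1} - e^{-\lambda T_2})$; part (iii) follows from $L^2(1/\rho^\eps_\infty)$-exponential convergence of the perturbed Fokker–Planck evolution to $\rho^\eps_\infty$ for each fixed $\eps > 0$, together with $\varphi \in C_c^\infty \subset L^2(1/\rho^\eps_\infty)$; and part (iv) combines the uniform-in-$t$ statement of (i) with (iii) by passing to $t\to\infty$ inside the $\eta$-bound.

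The main obstacle is the need to control $\nabla_{\mb{p}}$ of the backward semigroup. In the underdamped case (Theorem \ref{thm:LRTU}), noise acts on $\mb{p}$, so the $L^2$ Dirichlet form identity $\|\bm{\sigma}^T\nabla_{\mb{p}} \psi\|_{L^2(\rho_0)}^2 = -\langle \psi, \mathcal{L}_0 \psi\rangle_{L^2(\rho_0)}$ extracts the gradient bound from $L^2$-hypocoercivity alone. For the generalized Langevin augmented system, noise acts only on $\mb{z}$, so no analogous identity produces $\nabla_{\mb{p}}\psi$ from the Dirichlet form, and one genuinely needs the stronger $H^1(\rho_0)$-hypocoercivity of \cite{hairer2008ballistic}, whose hypotheses require Assumption (II) together with the bounded Hessian condition \eqref{eq:quad}. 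This is precisely why the theorem assumes \eqref{eq:quad} while the underdamped result does not.
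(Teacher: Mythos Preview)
Your proposal is correct and follows essentially the same approach as the paper's proof: Duhamel plus integration by parts to obtain the representation of $R(t,\eps;\varphi)$, Cauchy--Schwarz with the factorization $\sup_s\|\rho^\eps(\cdot,s)-\rho_0\|_{L^2(1/\rho^\eps_\infty)} \cdot \int_0^t\|\nabla_{\mb q}W\cdot\nabla_{\mb p}(e^{s\mathcal{L}_0}\varphi)\|_{L^2(\rho^\eps_\infty)}\,\ud s$, the bound $\|\rho^\eps(\cdot,s)-\rho_0\|\leq 2\|\rho_0-\rho^\eps_\infty\|$ via triangle inequality and contractivity, and the direct use of the $H^1(\rho_0)$ hypocoercivity \eqref{eq:expGLE} to extract exponential decay of $\|\nabla_{\mb p}(e^{s\mathcal{L}_0}\varphi)\|_{L^2(\rho_0)}$. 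Your closing explanation of why the $H^1$ estimate (and hence \eqref{eq:quad}) is genuinely needed here---because the Dirichlet form only controls $\nabla_{\mb z}$---is exactly the point the paper emphasizes. The only cosmetic difference is that for part (iii) the paper pairs $\|\varphi\|_{L^\infty}$ with $L^1$ convergence of $\rho^\eps(\cdot,t)$ to $\rho^\eps_\infty$, whereas you use $L^2(1/\rho^\eps_\infty)$; both work since $\varphi\in C_c^\infty$.
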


\begin{proof}
	The proof follows closely from the case of the underdamped Langevin dynamics, so we provide only a sketch.

	For (i), as in the proof of Theorem \ref{thm:LRTU}, by Duhamel's principle, we have 
	\begin{align}
	 R(t,\eps;\varphi) = -\int_{\R^{3d}} \int_0^t \varphi(\mb{q}, \mb{p}, \mb{z}) e^{(t-s)\mathcal{L}_0^*} \left( \nabla_{\mb{q}} W(\mb{q}) \cdot \nabla_{\mb{p}} \rho^\eps(\mb{q}, \mb{p}, \mb{z}) \right) \dd \mb{q} \dd \mb{p} \dd \mb{z} \dd s.
	\end{align}
	Using integration by parts and Fubini's theorem, this becomes 
	\begin{align}
	 R(t,\eps;\varphi) = \int_0^t \int_{\R^{3d}} \left[ \nabla_{\mb{q}} W(\mb{q}) \cdot \nabla_{\mb{p}} \left( e^{(t-s)\mathcal{L}_0} \varphi \right) \right] \rho^\eps(\mb{q}, \mb{p}, \mb{z}, s) \dd \mb{q} \dd \mb{p} \dd \mb{z} \dd s.
	\end{align}	
	The variational structure of the Fokker-Planck equation gives the bound 
	\begin{align*}
	\|\rho^\eps(\cdot, \cdot, \cdot, s) - \rho_0(\cdot, \cdot, \cdot)\|_{L^2(1/\rho^\eps_\infty)} &\leq \|\rho^\eps(\cdot, \cdot, \cdot, s) - \rho^\eps_\infty(\cdot, \cdot, \cdot)\|_{L^2(1/\rho^\eps_\infty)} \\
	& \quad + \|\rho_0(\cdot, \cdot, \cdot) - \rho^\eps_\infty(\cdot, \cdot, \cdot)\|_{L^2(1/\rho^\eps_\infty)} \\
	&\leq 2 \|\rho_0(\cdot, \cdot, \cdot) - \rho^\eps_\infty(\cdot, \cdot, \cdot)\|_{L^2(1/\rho^\eps_\infty)}.
	\end{align*}
	Using Cauchy-Schwarz's inequality, we obtain 
	\begin{align*}
	& \left| R(t, \eps; \varphi) - \int_0^t \int_{\R^{3d}} \left[ \nabla_{\mb{q}} W(\mb{q}) \cdot \nabla_{\mb{p}} \left( e^{(t-s) \mathcal{L}_0} \varphi \right) \right] \rho_0(\mb{q}, \mb{p}, \mb{z}) \dd \mb{q} \dd \mb{p} \dd \mb{z} \dd s \right| \\
	&\quad \leq 2 \|\rho_0(\cdot, \cdot, \cdot) - \rho^\eps_\infty(\cdot, \cdot, \cdot)\|_{L^2(1/\rho^\eps_\infty)} \cdot \int_0^t \left\| \nabla_{\mb{q}} W(\mb{q}) \cdot \nabla_{\mb{p}} \left( e^{s \mathcal{L}_0} \varphi \right) \right\|_{L^2(\rho^\eps_\infty)} \dd s.
	\end{align*}
	Since \(W\) is of compact support, by \eqref{eq:expGLE}, we have 
	\begin{align*}
	\int_0^t \left\| \nabla_{\mb{q}} W(\mb{q}) \cdot \nabla_{\mb{p}} \left( e^{s \mathcal{L}_0} \varphi \right) \right\|_{L^2(\rho_\infty^\eps)} \dd s \leq C \int_0^t e^{-\lambda s} \dd s \leq C'.
	\end{align*}
	Here, \(C'\) is a constant independent of \(t\) and \(\eps\). Hence,
	\begin{align*}
	\left| R(t, \eps; \varphi) - \int_0^t \int_{\R^{3d}} \left[ \nabla_{\mb{q}} W(\mb{q}) \cdot \nabla_{\mb{p}} \left( e^{(t-s) \mathcal{L}_0} \varphi \right) \right] \rho_0(\mb{q}, \mb{p}, \mb{z}) \dd \mb{q} \dd \mb{p} \dd \mb{z} \dd s \right| \\
	\leq 2C' \|\rho_0(\cdot, \cdot, \cdot) - \rho^\eps_\infty(\cdot, \cdot, \cdot)\|_{L^2(1/\rho^\eps_\infty)},
	\end{align*}
	and the right-hand side tends to 0 as \(\eps \to 0\). Thus, \eqref{eq:LRTG} holds, and the convergence is uniform in \(t\).

	For (ii), by \eqref{eq:expGLE}, for any \(0 < T_1 < T_2\), we have 
	\begin{align*}
	\left| \int_{T_1}^{T_2} \int_{\R^{3d}} \left[ \nabla_{\mb{q}} W(\mb{q}) \cdot \nabla_{\mb{p}} \left( e^{(t-s) \mathcal{L}_0} \varphi \right) \right] \rho_0(\mb{q}, \mb{p}, \mb{z}) \dd \mb{q} \dd \mb{p} \dd \mb{z} \dd s \right| \\
	\leq \|\nabla_{\mb{q}} W\|_{L^2(\rho_0)} \cdot \int_{T_1}^{T_2} e^{-\lambda s} \dd s \leq C(e^{-T_1 \lambda} - e^{-T_2 \lambda}).
	\end{align*}
	Here, \(C\) is a constant depending on \(V, W\), and \(\alpha\). Thus, \eqref{eq:tlimG} holds, and the convergence in \(t\) is exponentially fast.

	Next, for (iii), this is a consequence of convergence in \(L^1\). For any given \(\eps > 0\), we have 
	\begin{align*}
	&  \left| \dfrac{1}{\eps} \left( \int_{\R^{3d}} \varphi(\mb{q}, \mb{p}, \mb{z}) \rho^{\eps}_\infty(\mb{q}, \mb{p}, \mb{z}) \dd \mb{q} \dd \mb{p} \dd \mb{z} - \int_{\R^{3d}} \varphi(\mb{q}, \mb{p}, \mb{z}) \rho^\eps(\mb{q}, \mb{p}, \mb{z}, t) \dd \mb{q} \dd \mb{p} \dd \mb{z} \right) \right| \\
	&\quad  \leq \dfrac{1}{\eps} \|\varphi\|_{L^\infty} \|\rho^\eps(\cdot, \cdot, \cdot, t) - \rho_\infty^\eps(\cdot, \cdot, \cdot)\|_{L^1} \\
	&\quad  \leq C_1 e^{-r_\eps t} / \eps.
	\end{align*}
	Here, \(r_\eps > 0\) and \(C_1 > 0\) are constants depending on \(\eps\). Thus, we have convergence as \(t \to \infty\).

	Finally, for (iv), by (i), for any \(\eta > 0\), there exists \(\eps_0 > 0\) such that for all \(\eps \in (0, \eps_0)\), we have 
	\begin{align*}
	\left| R(t, \eps; \varphi) - \int_0^t \int_{\R^{3d}} [\nabla_{\mb{q}} W \cdot (\nabla_{\mb{p}} (e^{s \mathcal{L}_0} \varphi))] \rho_0 \dd \mb{q} \dd \mb{p} \dd \mb{z} \dd s \right| < \eta.
	\end{align*}
	Passing the limit \(t \to \infty\), we have 
	\begin{align*}
	\left| \lim_{t \to \infty} R(t, \eps; \varphi) - \int_0^\infty \int_{\R^{3d}} [\nabla_{\mb{q}} W \cdot (\nabla_{\mb{p}} (e^{s \mathcal{L}_0} \varphi))] \rho_0 \dd \mb{q} \dd \mb{p} \dd \mb{z} \dd s \right| \leq \eta.
	\end{align*}
	Thus, we conclude 
	\begin{align*}
	\lim_{\eps \to 0^+} \lim_{t \to \infty} R(t, \eps; \varphi) = \int_0^\infty \int_{\R^{3d}} [\nabla_{\mb{q}} W \cdot (\nabla_{\mb{p}} (e^{s \mathcal{L}_0} \varphi))] \rho_0 \dd \mb{q} \dd \mb{p} \dd \mb{z} \dd s.
	\end{align*}
	By (iii), this also implies \eqref{eq:GKG}. This concludes the proof.
\end{proof}

\section*{Compliance with Ethical Standards}
The authors declare no conflict of interest.

\section*{Acknowledgment}
Yuan Gao was partially supported by NSF under Award DMS-2204288. Jian-Guo Liu and Zibu Liu were partially supported by NSF under Award DMS-2106988.

\appendix

\section{Omitted Proofs for Reversibility Results}\label{sec:proof}
This appendix provides some preliminary definitions and the omitted proofs for the reversibility of overdamped and underdamped Langevin dynamics.

\begin{proof}[Proof of Lemma \ref{lmm:rev}]
	Suppose that \eqref{eq:OLE} with the initial density function $\rho_0(\mb{q})$ describes a reversible process. According to Definition \ref{def:reversibility}, $\rho_0$ must be a stationary measure. Consider the distribution of the time-reversed process $\mb{q}^*(t)$ with respect to a fixed time \(T\) (see Definition \ref{def:reversibility})
	\begin{align}
	\mb{q}^*_t = \mb{q}_{T-t}.
	\end{align}
	Because $\rho_0(\mb{q})$ is stationary, the time-reversed process $\mb{q}^*_t$ has the same distribution as the original process. Thus, for any $\varphi_1, \varphi_2 \in C_b^{\infty}(\mathbb{R}^d)$, reversibility implies 
	\begin{align}
	\E[\varphi_1(\mb{q}_t)\varphi_2(\mb{q}_0) \mid \mb{q}_0 \sim \rho_0] = \E[\varphi_1(\mb{q}^*_t)\varphi_2(\mb{q}^*_0) \mid \mb{q}_0 \sim \rho_0].
	\end{align}	
	Using the Markov property and the invariance of $\rho_0$, for any \(0 \leq t \leq T\), we have 
	\begin{equation}
	\begin{aligned}
	\E[\varphi_1(\mb{q}^*_t)\varphi_2(\mb{q}^*_0) \mid \mb{q}_0 \sim \rho_0] &= \E[\varphi_1(\mb{q}_{T-t})\varphi_2(\mb{q}_T) \mid \mb{q}_0 \sim \rho_0] \\
	&= \E[\varphi_1(\mb{q}_{T-t})\varphi_2(\mb{q}_T) \mid \mb{q}_{T-t} \sim \rho_0] \\
	&= \E[\varphi_1(\mb{q}_0)\varphi_2(\mb{q}_t) \mid \mb{q}_0 \sim \rho_0].
	\end{aligned}
	\end{equation}
	Thus, we obtain the symmetry 
	\begin{align}
	\E[\varphi_1(\mb{q}_t)\varphi_2(\mb{q}_0) \mid \mb{q}_0 \sim \rho_0] = \E[\varphi_1(\mb{q}_0)\varphi_2(\mb{q}_t) \mid \mb{q}_0 \sim \rho_0].
	\end{align}
	
	Conversely, if for arbitrary $\varphi_1$ and $\varphi_2$, equation \eqref{eq:revs} holds, we can apply smooth functions to approximate simple functions. By the monotone convergence theorem, this allows us to conclude reversibility.
\end{proof}

\begin{proof}[Proof of Theorem \ref{thm:rev1}]
	
We will prove the theorem using the following logical steps 
\begin{align*}
(i) \implies (ii) \implies (iv) \implies (iii) \implies (iv) \implies (i).
\end{align*}
		
\textbf{Step 1.} \((i) \implies (ii)\).

Denote the solution of \eqref{eq:OLE} as \(\mb{q}_t\). For arbitrary \(\varphi_1, \varphi_2 \in C_b^{\infty}(\mathbb{R}^d)\), we have 
\begin{equation}\label{conEE}
\begin{aligned}
\dfrac{\E[\varphi_1(\mb{q}_t)\varphi_2(\mb{q}_0)] - \E[\varphi_1(\mb{q}_0)\varphi_2(\mb{q}_0)]}{t} 
&= \dfrac{\E[(\varphi_1(\mb{q}_t)-\varphi_1(\mb{q}_0))\varphi_2(\mb{q}_0)]}{t} \\
&= \dfrac{\E[\E[(\varphi_1(\mb{q}_t)-\varphi_1(\mb{q}_0))\varphi_2(\mb{q}_0) \mid \mb{q}_0]]}{t} \\
&= \dfrac{\E[\varphi_2(\mb{q}_0)\E[(\varphi_1(\mb{q}_t) - \varphi_1(\mb{q}_0)) \mid \mb{q}_0]]}{t} \\
&= \int_{\mathbb{R}^d} \varphi_2(\mb{q}) \dfrac{\E[(\varphi_1(\mb{q}_t)-\varphi_1(\mb{q}_0)) \mid \mb{q}_0 = \mb{q}]}{t} \rho_0(\mb{q}) \, \mathrm{d}\mb{q}.
\end{aligned}
\end{equation}
Similarly, we have 
\begin{align}
\dfrac{\E[\varphi_2(\mb{q}_t)\varphi_1(\mb{q}_0)] - \E[\varphi_2(\mb{q}_0)\varphi_1(\mb{q}_0)]}{t}
= \int_{\mathbb{R}^d} \varphi_1(\mb{q}) \dfrac{\E[(\varphi_2(\mb{q}_t)-\varphi_2(\mb{q}_0)) \mid \mb{q}_0 = \mb{q}]}{t} \rho_0(\mb{q}) \, \mathrm{d}\mb{q}.
\end{align}
By Definition \ref{def:reversibility}, we know 
\begin{align*}
\E[\varphi_1(\mb{q}_t)\varphi_2(\mb{q}_0)] = \E[\varphi_1(\mb{q}_0)\varphi_2(\mb{q}_t)].
\end{align*}
Therefore, we have 
\begin{align}
\int_{\mathbb{R}^d} \varphi_2(\mb{q}) \dfrac{\E[(\varphi_1(\mb{q}_t) - \varphi_1(\mb{q}_0)) \mid \mb{q}_0 = \mb{q}]}{t} \rho_0(\mb{q}) \, \mathrm{d}\mb{q} 
= \int_{\mathbb{R}^d} \varphi_1(\mb{q}) \dfrac{\E[(\varphi_2(\mb{q}_t) - \varphi_2(\mb{q}_0)) \mid \mb{q}_0 = \mb{q}]}{t} \rho_0(\mb{q}) \, \mathrm{d}\mb{q}.
\end{align}
Passing the limit as \(t \to 0^+\), using the definition of the generator, we obtain 
\begin{align*}
\int_{\mathbb{R}^d} (\mathcal{L}\varphi_1)(\mb{q}) \varphi_2(\mb{q}) \rho_0(\mb{q}) \, \mathrm{d}\mb{q}
= \int_{\mathbb{R}^d} (\mathcal{L}\varphi_2)(\mb{q}) \varphi_1(\mb{q}) \rho_0(\mb{q}) \, \mathrm{d}\mb{q},
\end{align*}
which is exactly \eqref{eq:rev}.
		
\textbf{Step 2.} \((ii) \implies (iv)\).

Rewrite \eqref{eq:rev} as 
\begin{align*}
\int_{\mathbb{R}^d} \frac{1}{\rho_0(\mb{q})} \rho_0(\mb{q}) \varphi_1(\mb{q}) \mathcal{L}^*(\rho_0(\mb{q}) \varphi_2(\mb{q})) \, \mathrm{d}\mb{q} 
= \int_{\mathbb{R}^d} \frac{1}{\rho_0(\mb{q})} \rho_0(\mb{q}) \varphi_2(\mb{q}) \mathcal{L}^*(\rho_0(\mb{q}) \varphi_1(\mb{q})) \, \mathrm{d}\mb{q}.
\end{align*}
This holds for arbitrary \(\varphi_1, \varphi_2 \in C_0^{\infty}(\mathbb{R}^d)\). Hence, for arbitrary \(\phi_1(\mb{q}), \phi_2(\mb{q}) \in C_0^{\infty}(\mathbb{R}^d)\), we get 
\begin{align*}
\int_{\mathbb{R}^d} \frac{1}{\rho_0(\mb{q})} \phi_1(\mb{q}) \mathcal{L}^*(\phi_2(\mb{q})) \, \mathrm{d}\mb{q} 
= \int_{\mathbb{R}^d} \frac{1}{\rho_0(\mb{q})} \phi_2(\mb{q}) \mathcal{L}^*(\phi_1(\mb{q})) \, \mathrm{d}\mb{q}.
\end{align*}
Let \(1/\rho_0(\mb{q}) = e^{U(\mb{q})}\) for some \(U(\mb{q}): \mathbb{R}^d \to \mathbb{R}\). Substituting \eqref{eq:FPOLE}, we have 
\begin{align*}
\int_{\mathbb{R}^d} e^{U(\mb{q})} \phi_1(\mb{q}) \left[-\nabla \cdot (\phi_2 \mb{b}) + \frac{1}{2} \nabla^2 : (\phi_2 \bm{\sigma \sigma}^T)\right] \, \mathrm{d}\mb{q} 
= \int_{\mathbb{R}^d} e^{U(\mb{q})} \phi_2(\mb{q}) \left[-\nabla \cdot (\phi_1 \mb{b}) + \frac{1}{2} \nabla^2 : (\phi_1 \bm{\sigma \sigma}^T)\right] \, \mathrm{d}\mb{q}.
\end{align*}
Integration by parts yields 
\begin{equation} \label{eq:help}
\begin{aligned}
 \int_{\mathbb{R}^d} & e^{U(\mb{q})} \phi_2(\mb{q}) \nabla \phi_1(\mb{q}) \cdot \mb{b}(\mb{q}) + \frac{1}{2} \nabla^2 (e^{U(\mb{q})} \phi_1(\mb{q})) : (\phi_2 \bm{\sigma \sigma}^T) \, \mathrm{d}\mb{q} \\
&= \int_{\mathbb{R}^d} e^{U(\mb{q})} \phi_1(\mb{q}) \nabla \phi_2(\mb{q}) \cdot \mb{b}(\mb{q}) + \frac{1}{2} \nabla^2 (e^{U(\mb{q})} \phi_2(\mb{q})) : (\phi_1 \bm{\sigma \sigma}^T) \, \mathrm{d}\mb{q}.
\end{aligned}
\end{equation}
Next, using the fact that \(\bm{\sigma \sigma}^T\) is constant, we obtain 
\begin{align*}
\nabla^2(e^{U(\mb{q})} \phi_1(\mb{q})) : (\phi_2 \bm{\sigma \sigma}^T) - \nabla^2(e^{U(\mb{q})} \phi_2(\mb{q})) : (\phi_1 \bm{\sigma \sigma}^T) \\
= 2[\bm{\sigma \sigma}^T \nabla (e^{U(\mb{q})})] \cdot (\phi_2(\mb{q}) \nabla \phi_1(\mb{q}) - \phi_1(\mb{q}) \nabla \phi_2(\mb{q})) \\
+ e^{U(\mb{q})} \nabla \cdot (\bm{\sigma \sigma}^T (\phi_2(\mb{q}) \nabla \phi_1(\mb{q}) - \phi_1(\mb{q}) \nabla \phi_2(\mb{q}))).
\end{align*}
Substituting this into \eqref{eq:help} and integrating by parts again, we find 
\begin{align*}
\int_{\mathbb{R}^d} e^{U(\mb{q})} (\phi_2(\mb{q}) \nabla \phi_1(\mb{q}) - \phi_1(\mb{q}) \nabla \phi_2(\mb{q})) \cdot \left(\mb{b} + \frac{1}{2} \bm{\sigma \sigma}^T \nabla U(\mb{q})\right) \, \mathrm{d}\mb{q} = 0.
\end{align*}
By Lemma \ref{lmm:aux} and the smoothness of \(\mb{b}\) and \(\rho_0\), we obtain 
\begin{align*}
\mb{b} + \frac{1}{2} \bm{\sigma \sigma}^T \nabla U(\mb{q}) = 0.
\end{align*}
Since \(\bm{\sigma}\) is nonsingular, we have 
\begin{align}
-\nabla U(\mb{q}) = 2(\bm{\sigma \sigma}^T)^{-1} \mb{b}.
\end{align}
Thus, \(\rho_0(\mb{q}) = e^{-U(\mb{q})}\), which proves \((iv)\).

\textbf{Step 3.} \((iv) \implies (iii)\). 

If \((iv)\) holds, we can derive that \(\mathcal{L}\) generates a strongly continuous semigroup on \(L^2(\rho_0 \, \mathrm{d}\mb{q})\) by Hille-Yosida's theorem (see proof of Lemma \ref{Lmm}). Therefore, the diffusion process is well-defined, and \(\mb{x}(t)\) admits a probability density \(\rho(\mb{q},t)\), which is the unique local solution of \eqref{eq:FPOLE} with initial value \(\rho(\mb{q},0) = \rho_0\) (see \cite{buhler2018functional}).

Notice 
\begin{align*}
\mathcal{L}^* \rho_0 &= \frac{1}{Z} \nabla \cdot \left(-e^{-U(\mb{q})} \mb{b}(\mb{q}) + \frac{1}{2} \bm{\sigma \sigma}^T \nabla e^{-U(\mb{q})}\right) \\
&= -\frac{1}{Z} \nabla \cdot \left(e^{-U(\mb{q})} \frac{1}{2} \bm{\sigma \sigma}^T \nabla U(\mb{q}) - e^{-U(\mb{q})} \frac{1}{2} \bm{\sigma \sigma}^T \nabla U(\mb{q})\right) \\
&= 0.
\end{align*}
Thus, the unique local solution of \eqref{eq:FPOLE} with initial value \(\rho_0(\mb{q})\) is exactly \(\rho(\mb{q},t) = \rho_0(\mb{q})\), which is actually a global solution. Hence 
\begin{align}
\mb{j}(\rho(t)) = \mb{j}(\rho_0) = \mb{b} \rho_0 - \frac{1}{2} \bm{\sigma \sigma}^T \nabla \rho_0 = \mb{0}.
\end{align} 
Therefore, \(\mb{j}(\rho) = \mb{0}\), i.e., the probability flux is zero.

\textbf{Step 4.} \((iii) \implies (iv)\). 

Since \(\mb{j}(\rho(t)) = 0\), we know that \(\rho_0\) satisfies 
\begin{align*}
\mb{b}(\mb{q}) \rho_0(\mb{q}) - \frac{1}{2} \bm{\sigma \sigma}^T \nabla \rho_0(\mb{q}) = 0.
\end{align*}
Since \(\rho_0(\mb{q}) > 0\), we have 
\begin{align*}
\mb{b}(\mb{q}) = \frac{\bm{\sigma \sigma}^T \nabla \log(\rho_0(\mb{q}))}{2}.
\end{align*}
Thus, \((iv)\) is proved by setting \(U(\mb{q}) = -\log(\rho_0(\mb{q}))\). 

\textbf{Step 5.} \((iv) \implies (i)\).

From the proof of Lemma \ref{Lmm} and Hille-Yosida's theorem, we know that \(\mathcal{L}\) generates a strongly continuous semigroup on \(L^2(\rho_0 \, \mathrm{d}\mb{q})\), denoted as \(e^{t\mathcal{L}}\), for \(t \geq 0\). Therefore, for any \(\varphi_1 \in C_0^{\infty}(\mathbb{R}^d)\), \((e^{t\mathcal{L}} \varphi_1)(\mb{q}, t)\) is the solution of \eqref{eq:BOLE} with initial value \(f(\mb{q}, 0) = \varphi_1(\mb{q})\).

Direct calculation yields that for arbitrary \(f\) and \(g \in C_0^{\infty}(\mathbb{R}^d)\) 
\begin{align*}
\int_{\mathbb{R}^d} \rho_0(\mb{q}) g(\mb{q}) (\mathcal{L}f)(\mb{q}) \, \mathrm{d}\mb{q} 
&= \frac{1}{Z} \int_{\mathbb{R}^d} e^{-U(\mb{q})} g(\mb{q}) \left(\mb{b} \cdot \nabla f(\mb{q}) + \frac{1}{2} \bm{\sigma \sigma}^T : \nabla^2 f(\mb{q})\right) \, \mathrm{d}\mb{q} \\
&= \frac{1}{Z} \int_{\mathbb{R}^d} g(\mb{q}) \left(\frac{\bm{\sigma \sigma}^T \nabla e^{-U(\mb{q})}}{2} \cdot \nabla f(\mb{q}) + e^{-U(\mb{q})} \frac{1}{2} \bm{\sigma \sigma}^T : \nabla^2 f(\mb{q})\right) \, \mathrm{d}\mb{q} \\
&= \frac{1}{2Z} \int_{\mathbb{R}^d} g(\mb{q}) \nabla \cdot (e^{-U(\mb{q})} \bm{\sigma \sigma}^T \nabla f(\mb{q})) \, \mathrm{d}\mb{q} \\
&= -\frac{1}{2Z} \int_{\mathbb{R}^d} e^{-U(\mb{q})} (\nabla g(\mb{q}))^T \bm{\sigma \sigma}^T \nabla f(\mb{q}) \, \mathrm{d}\mb{q}.
\end{align*}
The last equality is symmetric with respect to \(f\) and \(g\), thus \(\mathcal{L}\) is symmetric in \(C_0^{\infty}(\mathbb{R}^d) \subset L^2(\rho_0 \, \mathrm{d}\mb{q})\). By \cite[Corollary 7.3.2]{buhler2018functional}, the dual semigroup of \(e^{t\mathcal{L}}\) in \(L^2(\rho_0 \, \mathrm{d}\mb{q})\) (denoted as \(\widetilde{e^{t\mathcal{L}}}\)) is generated by the dual operator of \(\mathcal{L}\) in \(L^2(\rho_0 \, \mathrm{d}\mb{q})\) (denoted as \(\widetilde{\mathcal{L}}\)).

Since \(\mathcal{L}\) is symmetric on \(C_0^{\infty}(\mathbb{R}^d)\), we know \(\widetilde{e^{t\mathcal{L}}} \varphi = e^{t\mathcal{L}} \varphi\) for any \(\varphi \in C_0^{\infty}(\mathbb{R}^d)\). Thus, for arbitrary \(\varphi_1, \varphi_2 \in C_0^{\infty}(\mathbb{R}^d)\), we have 
\begin{align*}
\int_{\mathbb{R}^d} \rho_0(\mb{q}) \varphi_2(\mb{q}) (e^{t\mathcal{L}} \varphi_1)(\mb{q}, t) \, \mathrm{d}\mb{q} 
= \int_{\mathbb{R}^d} \rho_0(\mb{q}) \varphi_1(\mb{q}) (\widetilde{e^{t\mathcal{L}}} \varphi_2)(\mb{q}, t) \, \mathrm{d}\mb{q}
= \int_{\mathbb{R}^d} \rho_0(\mb{q}) \varphi_1(\mb{q}) (e^{t\mathcal{L}} \varphi_2)(\mb{q}, t) \, \mathrm{d}\mb{q}.
\end{align*}
Thus, \(\E[\varphi_1(\mb{q}_0)\varphi_2(\mb{q}_t) \mid \mb{q}(0) \sim \rho_0] = \E[\varphi_2(\mb{q}_0)\varphi_1(\mb{q}_t) \mid \mb{q}(0) \sim \rho_0]\), which proves reversibility.

\end{proof}

\begin{proof}[Proof of Lemma \ref{lmm:rev2}]
	
First, if the process with initial density function \(\rho_0(\mb{q}, \mb{p})\) is a reversible process, then \(\rho_0\) is a stationary measure. Consider the distribution of the time-reversed process \((\mb{q}^*(t), \mb{p}^*(t))\) with respect to a fixed time \(T\), as defined in Definition \ref{def:reversibility}. Since \(\mb{q}\) is even and \(\mb{p}\) is odd, we have 
\begin{align}
	\mb{q}^*_t = \mb{q}_{T-t}, \quad \mb{p}^*_t = -\mb{p}_{T-t}.
\end{align}
Since \(\rho_0\) is invariant, it follows that 
\begin{align}
	(\mb{q}^*_t, \mb{p}^*_t) = (\mb{q}_{T-t}, -\mb{p}_{T-t}) \sim \rho_0(\mb{q}, -\mb{p}),
\end{align}
i.e., the distribution of the time-reversed process has density \(\rho_0(\mb{q}, -\mb{p})\).

Let \(\widetilde{\rho_0}(\mb{q}, \mb{p}) = \rho_0(\mb{q}, -\mb{p})\). For any \(\varphi_1, \varphi_2, \psi_1, \psi_2 \in C_b^{\infty}(\mathbb{R}^d)\), the reversibility condition implies 
\begin{align*}
	&\E[\varphi_1(\mb{q}_0) \varphi_2(\mb{p}_0) \psi_1(\mb{q}_t) \psi_2(\mb{p}_t) \mid (\mb{q}_0, \mb{p}_0) \sim \rho_0] \\
	&\quad = \E[\varphi_1(\mb{q}^*_0) \varphi_2(\mb{p}^*_0) \psi_1(\mb{q}^*_t) \psi_2(\mb{p}^*_t) \mid (\mb{q}_0, \mb{p}_0) \sim \rho_0].
\end{align*}

Let \(\widetilde{\varphi_2}(\mb{p}) = \varphi_2(-\mb{p})\) and \(\widetilde{\psi_2}(\mb{p}) = \psi_2(-\mb{p})\). Using the Markov property and the invariance of \(\rho_0\), for any \(t \in [0, T]\), we have 
\begin{align*}
	\E[\varphi_1(\mb{q}^*_0) \varphi_2(\mb{p}^*_0) \psi_1(\mb{q}^*_t) \psi_2(\mb{p}^*_t) \mid (\mb{q}_0, \mb{p}_0) \sim \rho_0] &= \E[\varphi_1(\mb{q}_T) \varphi_2(-\mb{p}_T) \psi_1(\mb{q}_{T-t}) \psi_2(-\mb{p}_{T-t}) \mid (\mb{q}_0, \mb{p}_0) \sim \rho_0] \\
	&= \E[\varphi_1(\mb{q}_T) \widetilde{\varphi_2}(\mb{p}_T) \psi_1(\mb{q}_{T-t}) \widetilde{\psi_2}(\mb{p}_{T-t}) \mid (\mb{q}_0, \mb{p}_0) \sim \rho_0] \\
	&= \E[\varphi_1(\mb{q}_T) \widetilde{\varphi_2}(\mb{p}_T) \psi_1(\mb{q}_{T-t}) \widetilde{\psi_2}(\mb{p}_{T-t}) \mid (\mb{q}_{T-t}, \mb{p}_{T-t}) \sim \rho_0] \\
	&= \E[\varphi_1(\mb{q}_t) \widetilde{\varphi_2}(\mb{p}_t) \psi_1(\mb{q}_0) \widetilde{\psi_2}(\mb{p}_0) \mid (\mb{q}_0, \mb{p}_0) \sim \rho_0].
\end{align*}
Therefore, equation \eqref{eq:rev2} holds.

Conversely, if equation \eqref{eq:rev2} holds for arbitrary \(\varphi_1\) and \(\varphi_2\), then by using smooth functions to approximate simple functions, one can conclude reversibility through the monotone convergence theorem. Hence, we employ \eqref{eq:rev2} to prove reversibility instead of checking the definition directly.

\end{proof}

\begin{lemma} \label{lmm:aux}
	For any \(\bm{\phi} = (\phi_1, \phi_2, \dots, \phi_d) \in C_0^{\infty}(\mathbb{R}^d; \mathbb{R}^d)\) with \(\phi_i \geq 0, i = 1, 2, \dots, d\), there exist \(f, g \in C_0^{\infty}(\mathbb{R}^d)\) such that 
	\[
		f \nabla g - g \nabla f = \bm{\phi}.
	\]
\end{lemma}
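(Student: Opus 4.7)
The plan is to reduce the problem to solvability of a first-order PDE and then construct $f, g$ by integration and a cutoff procedure. The key algebraic identity is
\[
f \nabla g - g \nabla f = f^2 \nabla(g/f),
\]
so setting $H := g/f$, the desired identity $f \nabla g - g \nabla f = \bm{\phi}$ becomes $\nabla H = \bm{\phi}/f^2$. This gradient equation is solvable for $H$ on any simply connected domain where $\bm{\phi}/f^2$ is curl-free, which translates into the compatibility system
\[
2\bigl(\phi_i \partial_j - \phi_j \partial_i\bigr)(\ln f) = \partial_j \phi_i - \partial_i \phi_j, \qquad i, j = 1, \ldots, d,
\]
for the unknown $\ln f$. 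Thus the problem reduces to constructing a smooth positive $f$ on a neighborhood of $\mathrm{supp}(\bm{\phi})$ satisfying this system.

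For $d = 2$ the system is a single first-order linear PDE for $\ln f$, which I would solve by the method of characteristics along the vector field $(-\phi_2, \phi_1)$. Since $\bm{\phi}$ has compact support, the characteristic ODE for $\ln f$ admits a smooth solution on all characteristic curves, and the initial data can be freely chosen to keep $f$ strictly positive. For $d \geq 3$ the system is overdetermined, but the antisymmetric structure together with a Frobenius-type integrability calculation yields local solvability; I would first produce $f > 0$ in charts and then patch using a partition of unity, relying on the non-negativity $\phi_i \geq 0$ to keep the characteristic flow well-behaved inside $\mathrm{supp}(\bm{\phi})$. With such $f$ in hand on a neighborhood $U$ of $\mathrm{supp}(\bm{\phi})$, I would define $H(\mb{x}) := \int_{\mb{x}_0}^{\mb{x}} (\bm{\phi}/f^2) \cdot d\mb{l}$ via path integration (well-defined by curl-freeness on simply connected components) and set $g := fH$ on $U$.

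The final step is to extend $(f, g)$ from $U$ to $C_0^\infty(\mathbb{R}^d)$ while preserving the identity. I would pick a smooth cutoff $\chi$ with $\chi \equiv 1$ on a neighborhood of $\mathrm{supp}(\bm{\phi})$ and $\chi$ supported in $U$, then replace $(f, g)$ by $(\chi f, \chi g)$. A direct calculation gives
\[
(\chi f) \nabla(\chi g) - (\chi g) \nabla(\chi f) = \chi^2 (f \nabla g - g \nabla f) = \chi^2 \bm{\phi} = \bm{\phi},
\]
since $\chi \equiv 1$ on $\mathrm{supp}(\bm{\phi})$. The main obstacle will be the smooth matching at $\partial \mathrm{supp}(\bm{\phi})$, where $\bm{\phi}$ may have flat zeros and $\bm{\phi}/f^2$ need not extend naively; the pointwise non-negativity $\phi_i \geq 0$ is essential here for controlling $\bm{\phi}/f^2$ near this boundary and for ensuring that $H$, and hence $g$, extends smoothly across it. Verifying higher-dimensional Frobenius compatibility together with this boundary matching are the two delicate technical points that the proof will need to address carefully.
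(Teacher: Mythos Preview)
Your proposal takes a far harder route than the paper and has genuine gaps. The paper's argument is essentially a one-line computation using the very identity $f\nabla g - g\nabla f = f^2\nabla(g/f)$ that you wrote down: for each coordinate $i$, set $f_i := \sqrt{\phi_i}$ and $g_i := x_i f_i$, so that $g_i/f_i = x_i$ and hence $f_i\nabla g_i - g_i\nabla f_i = \phi_i\,\bm{e}_i$. Summing over $i$ expresses $\bm{\phi}$ as $\sum_{i=1}^d(f_i\nabla g_i - g_i\nabla f_i)$. Note that this actually gives a decomposition into $d$ bilinear terms rather than a single pair $(f,g)$; since every use of the lemma in the paper is linear in the test functions (one integrates $f\nabla g - g\nabla f$ against a fixed vector field and wants the latter to vanish), this sum version is exactly what is needed. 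The non-negativity hypothesis $\phi_i\geq 0$ is used only to make sense of $\sqrt{\phi_i}$.

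By contrast, your plan to manufacture a \emph{single} pair $(f,g)$ runs into real obstructions. The overdetermined system $2(\phi_i\partial_j-\phi_j\partial_i)\ln f = \partial_j\phi_i-\partial_i\phi_j$ is not, in general, Frobenius-integrable for $d\geq 3$: writing out the compatibility conditions produces expressions in the second derivatives of $\bm{\phi}$ that a generic nonnegative $\bm{\phi}\in C_0^\infty$ has no reason to satisfy, and you give no argument that they do. Even in $d=2$, the characteristic field $(-\phi_2,\phi_1)$ degenerates on all of $\mathbb{R}^d\setminus\mathrm{supp}(\bm{\phi})$ and possibly inside the support as well, so the method of characteristics does not obviously yield a globally defined smooth positive $f$. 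The boundary matching you flag is not a peripheral technicality but the heart of the difficulty. You had the right algebraic identity; the missing idea is simply to apply it coordinate by coordinate with $g_i/f_i = x_i$, rather than trying to solve for one global pair.
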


\begin{proof}
	For each \(i\), let \(f_i = \sqrt{\phi_i}\) and \(g_i = x_i f_i\). Then
	\[
		f_i \nabla g_i - g_i \nabla f_i = f_i^2 \nabla \left( \frac{g_i}{f_i} \right) = \phi_i \bm{e}_i,
	\]
	where \(\bm{e}_i = (0, 0, \dots, 1, 0, \dots, 0) \in \mathbb{R}^d\) has 1 in the \(i\)-th position and 0 elsewhere. Thus, \(\bm{\phi} = \sum_{i=1}^d f_i \nabla g_i - g_i \nabla f_i\).
\end{proof}

\begin{proof}[Proof of Theorem \ref{thm:revULE}]
	\textbf{Step 1.} We prove \((i) \implies (ii)\).

	Because the process is reversible (and therefore stationary), \((\mb{q}^*_0, \mb{p}^*_0)\) and \((\mb{q}_0, \mb{p}_0)\) have the same distribution. Thus \((\mb{q}^*_0, \mb{p}^*_0) = (\mb{q}_T, -\mb{p}_T)\) has the same distribution as \((\mb{q}(0), \mb{p}(0))\), meaning that \(\rho_0(\mb{q}, \mb{p}) = \rho_0(\mb{q}, -\mb{p})\).

	From the definition of reversibility, we know that (i) is equivalent to 
	\begin{equation} \label{eq:exp2}
		\E[\varphi_1(\mb{q}_0) \varphi_2(\mb{p}_0) \psi_1(\mb{q}_t) \psi_2(\mb{p}_t) \mid (\mb{q}_0, \mb{p}_0) \sim \rho_0 ] 
		= \E[\varphi_1(\mb{q}_t) \varphi_2(-\mb{p}_t) \psi_1(\mb{q}_0) \psi_2(-\mb{p}_0) \mid (\mb{q}_0, \mb{p}_0) \sim \rho_0 ]
	\end{equation}
	for any \(\varphi_1(\mb{q}), \varphi_2(\mb{p}), \psi_1(\mb{q}), \psi_2(\mb{p}) \in C_0^{\infty}(\mathbb{R}^d)\).
	
	By similar calculations to those in \eqref{conEE}, we know
	\begin{align*}
		\iint & \varphi_1(\mb{q}) \varphi_2(\mb{p}) \frac{\E\left[ \psi_1(\mb{q}_t) \psi_2(\mb{p}_t) - \psi_1(\mb{q}_0) \psi_2(\mb{p}_0) \mid \mb{q}_0 = \mb{q}, \mb{p}_0 = \mb{p} \right]}{t} \rho_0(\mb{q}, \mb{p}) \, d\mb{q} \, d\mb{p} \\
		=& \iint \psi_1(\mb{q}) \psi_2(-\mb{p}) \frac{\E\left[ \varphi_1(\mb{q}_t) \varphi_2(-\mb{p}_t) - \varphi_1(\mb{q}_0) \varphi_2(-\mb{p}_0) \mid \mb{q}_0 = \mb{q}, \mb{p}_0 = \mb{p} \right]}{t} \rho_0(\mb{q}, \mb{p}) \, d\mb{q} \, d\mb{p}.
	\end{align*}
	Taking the limit as \(t \to 0^+\), by the definition of the generator for the process \(\mb{q}_t, \mb{p}_t\), we conclude that (i) implies
	\[
		\iint \varphi_1(\mb{q}) \varphi_2(\mb{p}) \mathcal{L} \left[ \psi_1(\mb{q}) \psi_2(\mb{p}) \right] \rho_0(\mb{q}, \mb{p}) \, d\mb{q} \, d\mb{p}
		= \iint \mathcal{L} \left[ \varphi_1(\mb{q}) \widetilde{\varphi}_2(\mb{p}) \right] \psi_1(\mb{q}) \widetilde{\psi}_2(\mb{p}) \rho_0(\mb{q}, \mb{p}) \, d\mb{q} \, d\mb{p},
	\]
	for any \(\varphi_1(\mb{q}), \varphi_2(\mb{p}), \psi_1(\mb{q}), \psi_2(\mb{p}) \in C_0^{\infty}(\mathbb{R}^d)\), where \(\tilde{\varphi}_2(\mb{p}) := \varphi_2(-\mb{p})\) and \(\tilde{\psi}_2(\mb{p}) := \psi_2(-\mb{p})\).

	As a consequence, taking \(\psi_1 \equiv \psi_2 \equiv 1\) in this equation, we have
	\[
		\iint \mathcal{L} \left[ \varphi_1(\mb{q}) \widetilde{\varphi}_2(\mb{p}) \right] \rho_0(\mb{q}, \mb{p}) \, d\mb{q} \, d\mb{p} = 0, \quad \forall \varphi_1, \varphi_2 \in C_0^{\infty}(\mathbb{R}^d),
	\]
	which implies that \(\rho_0(\mb{q}, \mb{p})\), satisfying \(\mathcal{L}^* \rho_0 = 0\), is an invariant measure. Therefore, we conclude that \((i) \implies (ii)\).

	\textbf{Step 2.} We prove \((ii)\) is equivalent to \((iii)\).

	Let \(H(\mb{q}, \mb{p}) = -\frac{1}{\beta} \ln \rho_0\). Then \((ii)\) is equivalent to
	\[
		\iint \varphi_1(\mb{q}) \varphi_2(\mb{p}) \mathcal{L} \left[ \psi_1(\mb{q}) \psi_2(\mb{p}) \right] e^{-\beta H(\mb{q}, \mb{p})} \, d\mb{q} \, d\mb{p}
		= \iint \mathcal{L} \left[ \varphi_1(\mb{q}) \widetilde{\varphi}_2(\mb{p}) \right] \psi_1(\mb{q}) \widetilde{\psi}_2(\mb{p}) e^{-\beta H(\mb{q}, \mb{p})} \, d\mb{q} \, d\mb{p},
	\]
	for any \(\varphi_1(\mb{q}), \varphi_2(\mb{p}), \psi_1(\mb{q}), \psi_2(\mb{p}) \in C_0^{\infty}(\mathbb{R}^d)\).
	Substituting the operator \(\mathcal{L}\) as defined in \eqref{eq:BULE} and using \(\widetilde{\varphi}_2(\mb{p}) = \varphi_2(-\mb{p})\) and \(\widetilde{\psi}_2(\mb{p}) = \psi_2(-\mb{p})\), we obtain 
	\[
		\begin{aligned}
			& \iint \varphi_1(\mb{q}) \varphi_2(\mb{p}) \Bigg[ \psi_2(\mb{p}) \mb{p} \cdot \nabla_{\mb{q}} \psi_1(\mb{q}) + \psi_1(\mb{q}) \mb{b} \cdot \nabla_{\mb{p}} \psi_2(\mb{p}) \\
			& \quad + \psi_1(\mb{q}) \left( - \dfrac{1}{2} (\bm{\sigma\sigma}^T \mb{p}) \cdot \nabla_{\mb{p}} \psi_2(\mb{p}) + \dfrac{1}{2\beta} (\bm{\sigma\sigma}^T) : \nabla^2_{\mb{p}} \psi_2(\mb{p}) \right) \Bigg] e^{-\beta H(\mb{q}, \mb{p})} \, d\mb{q} \, d\mb{p} \\
			&= \iint \psi_1(\mb{q}) \psi_2(\mb{p}) \Bigg[ - \varphi_2(\mb{p}) \mb{p} \cdot \nabla_{\mb{q}} \varphi_1(\mb{q}) - \varphi_1(\mb{q}) \mb{b} \cdot \nabla_{\mb{p}} \varphi_2(\mb{p}) \\
			& \quad + \varphi_1(\mb{q}) \left( - \dfrac{1}{2} (\bm{\sigma\sigma}^T \mb{p}) \cdot \nabla_{\mb{p}} \varphi_2(\mb{p}) + \dfrac{1}{2\beta} (\bm{\sigma\sigma}^T) : \nabla^2_{\mb{p}} \varphi_2(\mb{p}) \right) \Bigg] e^{-\beta H(\mb{q}, \mb{p})} \, d\mb{q} \, d\mb{p}.
		\end{aligned}
	\]

	We now separate the Hamiltonian flow terms on the LHS from the Fokker-Planck terms on the RHS. Then, we get 
	\[
		\begin{aligned}
			& I_1 := \iint \Big[ \varphi_2(\mb{p}) \psi_2(\mb{p}) \mb{p} \cdot \nabla_{\mb{q}} \left( \psi_1(\mb{q}) \varphi_1(\mb{q}) \right) + \varphi_1(\mb{q}) \psi_1(\mb{q}) \mb{b} \cdot \nabla_{\mb{p}} \left( \varphi_2(\mb{p}) \psi_2(\mb{p}) \right) \Big] e^{-\beta H(\mb{q}, \mb{p})} \, d\mb{q} \, d\mb{p} \\
			=& I_2 := \iint \Bigg[ \psi_1(\mb{q}) \varphi_1(\mb{q}) \Bigg( \dfrac{1}{2} \varphi_2(\mb{p}) (\bm{\sigma\sigma}^T \mb{p}) \cdot \nabla_{\mb{p}} \psi_2(\mb{p}) - \dfrac{1}{2\beta} \varphi_2(\mb{p}) (\bm{\sigma\sigma}^T) : \nabla^2_{\mb{p}} \psi_2(\mb{p}) \\
			& \qquad - \dfrac{1}{2} \psi_2(\mb{p}) (\bm{\sigma\sigma}^T \mb{p}) \cdot \nabla_{\mb{p}} \varphi_2(\mb{p}) + \dfrac{1}{2\beta} \psi_2(\mb{p}) (\bm{\sigma\sigma}^T) : \nabla^2_{\mb{p}} \varphi_2(\mb{p}) \Bigg) \Bigg] e^{-\beta H(\mb{q}, \mb{p})} \, d\mb{q} \, d\mb{p}.
		\end{aligned}
	\]
Using integration by parts, \( I_1 \) simplifies as 
\begin{align} \label{tm_i1}
I_1 = \iint \varphi_1(\mb{q}) \psi_1(\mb{q}) \varphi_2(\mb{p}) \psi_2(\mb{p}) 
\left( - \mb{p} \cdot \nabla_{\mb{q}} e^{-\beta H} - \mb{b} \cdot \nabla_{\mb{p}} e^{-\beta H} \right) d\mb{q} d\mb{p}.
\end{align}

Similarly, \( I_2 \) simplifies as 
\begin{equation} \label{tm_i2}
\begin{aligned}
I_2 &= \frac{1}{2} \iint \varphi_1(\mb{q}) \psi_1(\mb{q}) 
\Big[ e^{-\beta H} (\bm{\sigma\sigma}^T \mb{p}) \cdot 
\left( \varphi_2(\mb{p}) \nabla_{\mb{p}} \psi_2(\mb{p}) - \psi_2(\mb{p}) \nabla_{\mb{p}} \varphi_2(\mb{p}) \right) \\
&\quad + \frac{1}{\beta} \nabla_{\mb{p}} \left( \varphi_2(\mb{p}) e^{-\beta H} \right) (\bm{\sigma\sigma}^T) \nabla_{\mb{p}} \psi_2(\mb{p}) \\
&\quad - \frac{1}{\beta} \nabla_{\mb{p}} \left( \psi_2(\mb{p}) e^{-\beta H} \right) (\bm{\sigma\sigma}^T) \nabla_{\mb{p}} \varphi_2(\mb{p}) 
\Big] d\mb{q} d\mb{p} \\
&= \frac{1}{2} \iint \varphi_1(\mb{q}) \psi_1(\mb{q}) 
\left[ \left( e^{-\beta H} (\bm{\sigma\sigma}^T \mb{p}) + \frac{1}{\beta} \bm{\sigma\sigma}^T \nabla_{\mb{p}} e^{-\beta H} \right) 
\cdot \left( \varphi_2(\mb{p}) \nabla_{\mb{p}} \psi_2(\mb{p}) - \psi_2(\mb{p}) \nabla_{\mb{p}} \varphi_2(\mb{p}) \right) 
\right] d\mb{q} d\mb{p}.
\end{aligned}
\end{equation}

Now, let \( \varphi_2 = \psi_2 \), then \( I_2 = 0 \), and from \eqref{tm_i1}, we obtain 
\begin{align*}
I_1 = \iint \varphi_1(\mb{q}) \psi_1(\mb{q}) \varphi_2^2(\mb{p}) 
\left( - \mb{p} \cdot \nabla_{\mb{q}} e^{-\beta H} - \mb{b} \cdot \nabla_{\mb{p}} e^{-\beta H} \right) d\mb{q} d\mb{p} = 0
\end{align*}
for any \( \varphi_1(\mb{q}), \psi_1(\mb{q}) \), and \( \varphi_2(\mb{p}) \). Thus, we deduce 
\begin{align} \label{eq:sv_1}
- \mb{p} \cdot \nabla_{\mb{q}} e^{-\beta H} - \mb{b} \cdot \nabla_{\mb{p}} e^{-\beta H} = 0.
\end{align}

This implies that \( I_1 = I_2 = 0 \) for arbitrary test functions \( \varphi_1(\mb{p}), \varphi_2(\mb{p}), \psi_1(\mb{q}) \), and \( \psi_2(\mb{q}) \). By the auxiliary lemma, equation \eqref{tm_i2} yields 
\begin{align*}
e^{-\beta H} (\bm{\sigma\sigma}^T \mb{p}) + \frac{1}{\beta} \bm{\sigma\sigma}^T \nabla_{\mb{p}} e^{-\beta H} = 0.
\end{align*}
Since \( \bm{\sigma} \) is nonsingular, we conclude 
\begin{align} \label{eq:sv_2}
e^{-\beta H} \mb{p} + \frac{1}{\beta} \nabla_{\mb{p}} e^{-\beta H} = 0
\end{align}
holds for any \( \mb{p} \).

Next, we use \eqref{eq:sv_1} and \eqref{eq:sv_2} to derive \( H(\mb{q}, \mb{p}) = \frac{|\mb{p}|^2}{2} + U(\mb{q}) \) for some potential \( U(\mb{q}) \). Equation \eqref{eq:sv_2} implies 
\[
\nabla_{\mb{p}} e^{-\beta H + \beta \frac{|\mb{p}|^2}{2}} = 0,
\]
and thus there exists a potential \( U(\mb{q}) \) such that
\begin{equation} \label{eq:sv_p}
e^{-\beta H} = e^{-\beta \left( \frac{|\mb{p}|^2}{2} + U(\mb{q}) \right)}.
\end{equation}
Substituting \eqref{eq:sv_p} into \eqref{eq:sv_1}, we derive 
\begin{equation}
- \mb{p} \cdot \nabla_{\mb{q}} e^{-\beta H} - \mb{b} \cdot \nabla_{\mb{p}} e^{-\beta H} = 0,
\end{equation}
which simplifies to
\begin{equation}
\mb{p} \cdot \left( \mb{b} + \nabla_{\mb{q}} U(\mb{q}) \right) = 0, \quad \forall \, \mb{p}, \mb{q}.
\end{equation}
Therefore, we conclude 
\[
\mb{b} = - \nabla_{\mb{q}} U(\mb{q}),
\]
and \( U(\mb{q}) \) is unique up to a constant. Thus, the invariant measure is given by 
\begin{equation}
\rho_0(\mb{q}, \mb{p}) = \frac{1}{Z} e^{-\beta \left( \frac{|\mb{p}|^2}{2} + U(\mb{q}) \right)}, \quad Z = \iint e^{-\beta \left( \frac{|\mb{p}|^2}{2} + U(\mb{q}) \right)} d\mb{q} d\mb{p},
\end{equation}
which is the unique invariant measure, and hence (ii) is equivalent to (iii).

\textbf{Step 3.} We prove that (iii) implies (i).

First, from (iii) and for $\sL$ defined in \eqref{eq:BULE} with $\mb{b} = -\nabla_{\mb{q}} U(\mb{q})$, it is easy to verify \eqref{eq:thm2_ii}. Now, we use \eqref{eq:thm2_ii} and (iii) to derive (i).

From Lemma \ref{Lmm}, $-\sL$ is a maximal monotone operator in $L^2(\rho_0 \, d\mb{q}d\mb{p})$, where
\[
\rho_0(\mb{q}, \mb{p}) = \dfrac{1}{Z} e^{-\beta \left( \frac{|\mb{p}|^2}{2} + U(\mb{q}) \right)}.
\]
Recall the strongly continuous semigroup generated by $-\sL$, denoted by \( S(t) = e^{t\sL} \).

From \cite[p381, Cor 7.3.2]{buhler2018functional}, we know that the dual semigroup \( \widetilde{S}(t) = \widetilde{(e^{t\sL})} \) is generated by the dual operator \( -\widetilde{\sL} \), which is also a maximal monotone operator. Here, the dual operators are understood in \( L^2(\rho_0 \, d\mb{q}d\mb{p}) \). Therefore, we have 
\begin{align*}
\iint & \varphi_1(\mb{q}) \varphi_2(\mb{p}) e^{t\sL} \left( \psi_1(\mb{q}) \psi_2(\mb{p}) \right) \rho_0(\mb{q}, \mb{p}) \, d\mb{q} \, d\mb{p} \\
=& \iint \widetilde{(e^{t\sL})} \left( \varphi_1(\mb{q}) \varphi_2(\mb{p}) \right) \psi_1(\mb{q}) \psi_2(\mb{p}) \rho_0(\mb{q}, \mb{p}) \, d\mb{q} \, d\mb{p} \\
=& \iint \left( e^{t \widetilde{\sL}} \right) \left( \varphi_1(\mb{q}) \varphi_2(\mb{p}) \right) \psi_1(\mb{q}) \psi_2(\mb{p}) \rho_0(\mb{q}, \mb{p}) \, d\mb{q} \, d\mb{p},
\end{align*}
where \( \left( e^{t \widetilde{\sL}} \right) \left( \varphi_1(\mb{q}) \varphi_2(\mb{p}) \right) \) is the solution to
\begin{equation}
\frac{\partial}{\partial t} f = \widetilde{\sL} f := - \left( \mb{p} \cdot \nabla_{\mb{q}} f + \mb{b} \cdot \nabla_{\mb{p}} f \right) + \left( - \frac{1}{2} \left( \bm{\sigma\sigma}^T \mb{p} \right) \cdot \nabla_{\mb{p}} f + \frac{1}{2 \beta} \left( \bm{\sigma\sigma}^T \right) : \nabla^2_{\mb{p}} f \right).
\end{equation}
By changing variables from \( \mb{p} \) to \( -\mb{p} \), it is easy to verify 
\begin{equation}
\left( e^{t \widetilde{\sL}} \right) \left( \varphi_1(\mb{q}) \varphi_2(\mb{p}) \right) = \left( e^{t\sL} \right) \left( \varphi_1(\mb{q}) \varphi_2(-\mb{p}) \right) \Big|_{(\mb{q}, -\mb{p})} = \left( e^{t\sL} \right) \left( \varphi_1(\mb{q}) \tilde{\varphi}_2(\mb{p}) \right) \Big|_{(\mb{q}, -\mb{p})}.
\end{equation}
Thus, we have 
\begin{align*}
\iint &\varphi_1(\mb{q}) \varphi_2(\mb{p}) e^{t\sL} \left( \psi_1(\mb{q}) \psi_2(\mb{p}) \right) \rho_0(\mb{q}, \mb{p}) \, d\mb{q} \, d\mb{p} \\
=& \iint \left( e^{t\sL} \right) \left( \varphi_1(\mb{q}) \tilde{\varphi}_2(\mb{p}) \right) \Big|_{(\mb{q}, -\mb{p})} \psi_1(\mb{q}) \psi_2(\mb{p}) \rho_0(\mb{q}, \mb{p}) \, d\mb{q} \, d\mb{p} \\
=&   \iint (e^{t\sL})\bbs{\varphi_1(\mb q)  \tilde{\varphi}_2(\mb p)}  \psi_1(\mb q)  {\psi}_2(-\mb p)  \rho_0(\mb q, -\mb p) \ud \mb q \ud  \mb p\\
=& \iint \left( e^{t\sL} \right) \left( \varphi_1(\mb{q}) \tilde{\varphi}_2(\mb{p}) \right) \psi_1(\mb{q}) \tilde{\psi}_2(\mb{p}) \rho_0(\mb{q}, \mb{p}) \, d\mb{q} \, d\mb{p},
\end{align*}
and we conclude (i).

\textbf{Step 4.} We prove that (iii) and (iv) are equivalent.

It is obvious that (iii) implies (iv) since \( H(\mb{q}, \mb{p}) = H(\mb{q}, -\mb{p}) \), and the Gibbs measure solves \eqref{eq:FPOLE}. Thus, we just need to prove that (iv) implies (iii). Let \( \widetilde{\rho_0}(\mb{q}, \mb{p}) = \rho_0(\mb{q}, -\mb{p}) \). Since \( \rho(\mb{q}, \mb{p}) = \rho(\mb{q}, -\mb{p}) \), they both solve \eqref{eq:FPOLE}. Thus 
\begin{equation} \label{eq:solve}
\begin{aligned}
- \mb{p} \cdot \nabla_{\mb{q}} \rho_0 - \mb{b} \cdot \nabla_{\mb{p}} \rho_0 + \dfrac{1}{2} \nabla_{\mb{p}} \cdot \left( \rho_0 \bm{\sigma\sigma}^T \mb{p} + \frac{1}{\beta} \bm{\sigma\sigma}^T \nabla_{\mb{p}} \rho_0 \right) &= 0, \\
- \mb{p} \cdot \nabla_{\mb{q}} \widetilde{\rho_0} - \mb{b} \cdot \nabla_{\mb{p}} \widetilde{\rho_0} + \dfrac{1}{2} \nabla_{\mb{p}} \cdot \left( \widetilde{\rho_0} \bm{\sigma\sigma}^T \mb{p} + \frac{1}{\beta} \bm{\sigma\sigma}^T \nabla_{\mb{p}} \widetilde{\rho_0} \right) &= 0.
\end{aligned}
\end{equation}

Substituting \( \widetilde{\rho_0}(\mb{q}, \mb{p}) = \rho_0(\mb{q}, -\mb{p}) \) into \eqref{eq:solve}, we get 
\begin{equation}
\begin{aligned}
- \mb{p} \cdot \nabla_{\mb{q}} \rho_0 - \mb{b} \cdot \nabla_{\mb{p}} \rho_0 + \dfrac{1}{2} \nabla_{\mb{p}} \cdot \left( \rho_0 \bm{\sigma\sigma}^T \mb{p} + \frac{1}{\beta} \bm{\sigma\sigma}^T \nabla_{\mb{p}} \rho_0 \right) &= 0, \\
\mb{p} \cdot \nabla_{\mb{q}} \rho_0 + \mb{b} \cdot \nabla_{\mb{p}} \rho_0 + \dfrac{1}{2} \nabla_{\mb{p}} \cdot \left( \rho_0 \bm{\sigma\sigma}^T \mb{p} + \frac{1}{\beta} \bm{\sigma\sigma}^T \nabla_{\mb{p}} \rho_0 \right) &= 0,
\end{aligned}
\end{equation}
which implies 
\begin{align} \label{eq:solve2}
\mb{p} \cdot \nabla_{\mb{q}} \rho_0 + \mb{b} \cdot \nabla_{\mb{p}} \rho_0 = 0, \quad \nabla_{\mb{p}} \cdot \left( \rho_0 \bm{\sigma\sigma}^T \mb{p} + \frac{1}{\beta} \bm{\sigma\sigma}^T \nabla_{\mb{p}} \rho_0 \right) = 0.
\end{align}

Since \( \rho(\mb{q}, \mb{p}) > 0 \) and \( \rho_0(\mb{q}, \mb{p}) \in L^1(\R^{2d}) \), by Fubini's theorem, for almost every \( \mb{q} \in \R^d \), \( m(\mb{q}) := \int_{\R^d} \rho_0(\mb{q}, \mb{p}) \, d\mb{p} \in (0, \infty) \) exists, and \( m(\mb{q}) \in L^1(\R^d) \). Now, for these \( \mb{q} \)'s, from \eqref{eq:solve2}, \( \rho_0(\mb{q}, \mb{p}) \) solves the following Fokker-Planck equation (in the \( \mb{p} \)-variable) 
\begin{align*}
\nabla_{\mb{p}} \cdot \left( \rho_0 \bm{\sigma\sigma}^T \mb{p} + \frac{1}{\beta} \bm{\sigma\sigma}^T \nabla_{\mb{p}} \rho_0 \right) = 0,
\end{align*}
and \( \rho_0(\mb{q}, \mb{p}) \) is a positive \( L^1 \)-solution (in the \( \mb{p} \)-variable). Thus, by Proposition \ref{prop:uniqueness}, we know that \( e^{-\beta |\mb{p}|^2 / 2} \) is the unique non-zero solution (up to a constant). Hence, there exists \( c(\mb{q}) > 0 \) such that 
\begin{align} \label{rho_Q}
\rho_0 = c(\mb{q}) e^{-\beta |\mb{p}|^2 / 2}.
\end{align}
Substituting the above equality into \eqref{eq:solve2}, we get 
\begin{align*}
e^{-\beta |\mb{p}|^2 / 2} \mb{p} \cdot \left( \nabla_{\mb{q}} c(\mb{q}) - c \beta \mb{b} \right) = 0.
\end{align*}
This holds for arbitrary \( \mb{p} \), so \( \mb{b} = -\nabla_{\mb{q}} \log(c) / \beta \), and (iii) is proved.

\textbf{Step 5.} We prove that (v) and (iii) are equivalent. 

It is clear that (iii) implies (v), so we only need to prove that (v) implies (iii). First, we have 
\begin{align*}
    -U_2(\mb{p}) \, \mb{p} \cdot \nabla_{\mb{q}} U_1(\mb{q}) - U_1(\mb{q}) \, \mb{b} \cdot \nabla_{\mb{p}} U_2(\mb{p}) + \dfrac{U_1(\mb{q})}{2} \nabla_{\mb{p}} \cdot \left( U_2(\mb{p}) \bm{\sigma\sigma}^T \mb{p} + \frac{1}{\beta} \bm{\sigma\sigma}^T \nabla_{\mb{p}} U_2(\mb{p}) \right) = 0.
\end{align*} 
Dividing by \( U_1(\mb{q}) U_2(\mb{p}) \), we get 
\begin{align} \label{eq:help4}
    - \mb{p} \cdot \nabla_{\mb{q}} \log U_1(\mb{q}) - \mb{b} \cdot \nabla_{\mb{p}} \log U_2(\mb{p}) + \dfrac{1}{2 U_2(\mb{p})} \nabla_{\mb{p}} \cdot \left( U_2(\mb{p}) \bm{\sigma\sigma}^T \mb{p} + \frac{1}{\beta} \bm{\sigma\sigma}^T \nabla_{\mb{p}} U_2(\mb{p}) \right) = 0.
\end{align}
Notice that the last term only depends on \( \mb{p} \). Now, evaluating \eqref{eq:help4} at \( (\mb{q}_1, \mb{p}) \) and \( (\mb{q}_2, \mb{p}) \) and taking the difference yields 
\begin{align*}
    \mb{p} \cdot \nabla_{\mb{q}} [\log U_1(\mb{q}_1) - \log U_1(\mb{q}_2)] + [\mb{b}(\mb{q}_1) - \mb{b}(\mb{q}_2)] \cdot \nabla_{\mb{p}} \log U_2(\mb{p}) = 0.
\end{align*}
This holds for any \( \mb{p}, \mb{q}_1, \mb{q}_2 \in \R^d \). Taking the derivative with respect to \( \mb{p} \), we get 
\begin{align} \label{eq:help6}
    \nabla_{\mb{q}} [\log U_1(\mb{q}_1) - \log U_1(\mb{q}_2)] + \nabla^2_{\mb{p}} \log U_2(\mb{p}) [\mb{b}(\mb{q}_1) - \mb{b}(\mb{q}_2)] = 0.
\end{align}
Notice that the first term only depends on \( \mb{q}_1 \) and \( \mb{q}_2 \), and so does the second term. Thus, for any \( \mb{p}_1 \) and \( \mb{p}_2 \), we have 
\begin{align} \label{eq:help5}
    [\nabla^2_{\mb{p}} \log U_2(\mb{p}_1) - \nabla^2_{\mb{p}} \log U_2(\mb{p}_2)] [\mb{b}(\mb{q}_1) - \mb{b}(\mb{q}_2)] = \mb{0}.
\end{align}
Equation \eqref{eq:help5} implies that, for any \( \mb{p}_1 \) and \( \mb{p}_2 \), we must have 
\begin{align*}
    \nabla^2_{\mb{p}} \log U_2(\mb{p}_1) = \nabla^2_{\mb{p}} \log U_2(\mb{p}_2).
\end{align*} 
Thus, \( \nabla^2_{\mb{p}} \log U_2(\mb{p}) \) is a constant, meaning there exist \( \mb{A} \in \R^{d \times d}, \mb{a} \in \R^d \), and \( c \in \R \), all constant, such that 
\begin{align} \label{eq:help8}
    \log U_2(\mb{p}) = \mb{p}^T \mb{A} \mb{p} + \mb{a} \cdot \mb{p} + c.
\end{align}
We assume that \( \mb{A} \) is symmetric. Substituting this back into \eqref{eq:help6}, we get 
\begin{align*}
    \nabla_{\mb{q}} [\log U_1(\mb{q}_1) - \log U_1(\mb{q}_2)] + 2 \mb{A} [\mb{b}(\mb{q}_1) - \mb{b}(\mb{q}_2)] = 0,
\end{align*}
which holds for arbitrary \( \mb{q}_1 \) and \( \mb{q}_2 \). Thus, there exists a constant \( \mb{c}' \) such that, for any \( \mb{q} \in \R^d \), we have 
\begin{align} \label{eq:help7}
    \nabla_{\mb{q}} \log U_1(\mb{q}) + 2 \mb{A} \mb{b} = \mb{c}'.
\end{align}
Substituting \eqref{eq:help7} and \eqref{eq:help8} back into \eqref{eq:help4}, we get 
\begin{align*}
    -\mb{p} \cdot \mb{c}' - \mb{b} \cdot \mb{a} + \dfrac{1}{2} \left[ (2 \mb{A} \mb{p} + \mb{a})^T \bm{\sigma\sigma}^T \mb{p} + \mathrm{tr}(\bm{\sigma\sigma}^T) + \dfrac{1}{\beta} (2 \mb{A} \mb{p} + \mb{a})^T \bm{\sigma\sigma} (2 \mb{A} \mb{p} + \mb{a}) + \dfrac{2}{\beta} \mathrm{tr}(\bm{\sigma\sigma}^T \mb{A}) \right] = 0.
\end{align*}
Here, \( \mb{b} \) is the only term that depends on \( \mb{q} \). Thus, from the previous argument, we know that \( \mb{a} = 0 \). The above equation holds for all \( \mb{p} \in \R^d \), so the coefficients are all zero. Hence 
\begin{align*}
    \mb{c}' = \mb{0}, \quad \mb{A}^T \bm{\sigma\sigma}^T + \dfrac{2}{\beta} \mb{A}^T \bm{\sigma\sigma}^T \mb{A} = \mb{0}, \quad \mathrm{tr}(\bm{\sigma\sigma}^T (2 \mb{A} / \beta + \mb{I}_d)) = 0.
\end{align*}
Since \( \mb{A} = \mb{A}^T \), the second equality implies that \( \mb{A}^T \bm{\sigma\sigma}^T = -2 \mb{A}^T \bm{\sigma \sigma}^T \mb{A} / \beta \) is symmetric, so \( \mb{A} \) and \( \bm{\sigma\sigma}^T \) are commutative. Thus 
\begin{align} \label{tmAA}
    \bm{\sigma\sigma}^T (\mb{A} + 2 \mb{A}^2 / \beta) = 0.
\end{align}
Since \( \bm{\sigma\sigma}^T \) is invertible, it follows that \( \mb{A} + 2 \mb{A}^2 / \beta = \mb{0} \). The eigenvalues of \( \mb{A} \) are either 0 or \( -\beta / 2 \). Therefore, the eigenvalues of \( (2 \mb{A} / \beta + \mb{I}_d) \) are either 0 or 1. Suppose the eigenvectors of \( (2 \mb{A} / \beta + \mb{I}_d) \) are \( \mb{u}_i \), \( i = 1, 2, \dots, d \), satisfying \( \mb{u}_i \cdot \mb{u}_j = \delta_{ij} \), with corresponding eigenvalues \( \lambda_i \), \( i = 1, 2, \dots, d \). Then \( \lambda_i = 0 \) or 1, and 
\begin{align*}
    0 = \mathrm{tr}(\bm{\sigma\sigma}^T (2 \mb{A} / \beta + \mb{I}_d)) = \sum_{i=1}^d \mb{u}_i^T \bm{\sigma\sigma}^T (2 \mb{A} / \beta + \mb{I}_d) \mb{u}_i = \sum_{i=1}^d \lambda_i \mb{u}_i^T \bm{\sigma\sigma}^T \mb{u}_i \geq 0.
\end{align*}  
Thus, \( \lambda_i = 0 \) for all \( i = 1, 2, \dots, d \), since \( \bm{\sigma} \) is nonsingular. Hence, the only eigenvalue of \( \mb{A} \) is \( -\beta / 2 \), and therefore \( \mb{A} = -\beta \mb{I}_d / 2 \). Since \( \mb{c}' = \mb{0} \) from \eqref{eq:help7}, we have \( \mb{b} = \dfrac{1}{\beta} \nabla_{\mb{q}} \log U_1(\mb{q}) \), which is a gradient. Thus, (iii) is proved.
\end{proof}

\begin{lemma}\label{Lmm}
    Let \( \mb{b} = - \nabla_{\mb{q}} U(\mb{q}) \) in \eqref{eq:BULE}. Consider the operator 
    \begin{equation}
        \mathcal{L}f = \left( -\mb{p} \cdot \nabla_{\mb{q}} f - \mb{b} \cdot \nabla_{\mb{p}} f \right) + \left( -\frac{1}{2} (\bm{\sigma\sigma}^T \mb{p}) \cdot \nabla_{\mb{p}} f + \frac{1}{2 \beta} (\bm{\sigma\sigma}^T) : \nabla^2_{\mb{p}} f \right) 
        =: T f + L_v f.
    \end{equation}
    Then \( -\mathcal{L} \) is a maximal monotone operator in \( L^2({\rho_0} \, \dd \mb{q} \dd \mb{p}) \) with the weight \( \rho_0(\mb{q}, \mb{p}) \) defined in \eqref{eq:req}.
\end{lemma}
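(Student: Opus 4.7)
The plan is to split $\mathcal{L} = T + L_v$ and verify separately the two defining conditions of a maximal monotone operator (dissipativity and the range condition $R(I - \mathcal{L}) = L^2(\rho_0\,\dd\mb{q}\dd\mb{p})$), working on the core $C_c^\infty(\R^{2d})$ and then passing to the closure.

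For \emph{dissipativity}, the key observation is that $\rho_0 \propto e^{-\beta H}$ with $H = |\mb{p}|^2/2 + U(\mb{q})$, so $\nabla_{\mb{q}}\rho_0 = -\beta\rho_0\nabla_{\mb{q}}U$ and $\nabla_{\mb{p}}\rho_0 = -\beta\rho_0\mb{p}$. For the Hamiltonian transport part, I would write $\int (Tf)f\rho_0\,\dd\mb{q}\dd\mb{p} = \tfrac{1}{2}\int T(f^2)\rho_0\,\dd\mb{q}\dd\mb{p}$, and after integration by parts this vanishes because $T^*\rho_0 = \mb{p}\cdot\nabla_{\mb{q}}\rho_0 - \nabla_{\mb{q}}U\cdot\nabla_{\mb{p}}\rho_0 = 0$ (the Liouville flow preserves the Gibbs measure). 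For the OU part I would rewrite it in divergence form,
\[
    L_v f = \frac{1}{2\beta}\frac{1}{\rho_0}\nabla_{\mb{p}}\cdot\bbs{\rho_0\,\bm{\sigma\sigma}^T\nabla_{\mb{p}} f},
\]
which can be checked by expanding the right-hand side using $\nabla_{\mb{p}}\log\rho_0 = -\beta\mb{p}$. Integrating by parts against $f\rho_0$ then yields
\[
    \la -\mathcal{L} f, f\ra_{L^2(\rho_0)} = \frac{1}{2\beta}\int \rho_0\,|\bm{\sigma}^T\nabla_{\mb{p}} f|^2\,\dd\mb{q}\dd\mb{p} \geq 0,
\]
which establishes monotonicity on $C_c^\infty(\R^{2d})$ (and on its closure).

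For the \emph{range condition}, I would argue that $-\mathcal{L}$, originally defined on $C_c^\infty(\R^{2d})$, admits a closed maximal monotone extension. The cleanest route is to appeal to the existence and uniqueness of a strong solution to the underdamped Langevin SDE under assumptions \eqref{eq:llc}--\eqref{eq:gc}, which gives a Feller semigroup $\{P_t\}_{t\geq 0}$ on $C_b(\R^{2d})$. Since $\rho_0$ is an invariant measure (as verified in Theorem \ref{thm:revULE}(iv)) and reversibility fails in the usual sense in $L^2(\rho_0)$ due to the odd $\mb{p}$-variable but the semigroup is still a contraction (by Jensen's inequality applied to the conditional expectation representation, using invariance of $\rho_0$), the semigroup extends to a strongly continuous contraction semigroup on $L^2(\rho_0\,\dd\mb{q}\dd\mb{p})$. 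Its infinitesimal generator coincides with $\mathcal{L}$ on the core $C_c^\infty(\R^{2d})$, and by the Hille–Yosida theorem the generator is maximal monotone (in the sense that $-\mathcal{L}$ is), giving $R(\lambda I - \mathcal{L}) = L^2(\rho_0)$ for every $\lambda > 0$.

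The main obstacle is the range condition, since $\mathcal{L}$ is only degenerately elliptic (no diffusion in $\mb{q}$) and standard Lax–Milgram does not apply directly; one cannot solve $(I - \mathcal{L})f = g$ by a coercive bilinear form on $H^1(\rho_0)$. An alternative that avoids invoking the SDE would be a vanishing-viscosity/Galerkin argument: solve the regularized equation $(I - \mathcal{L} - \eta\Delta_{\mb{q}})f_\eta = g$ (which is uniformly elliptic, hence solvable by Lax–Milgram on $H^1(\rho_0)$), derive the a priori bound $\|f_\eta\|_{L^2(\rho_0)} \leq \|g\|_{L^2(\rho_0)}$ from the dissipativity above (which is uniform in $\eta$ since the extra term adds a nonnegative contribution), and pass to the weak limit $\eta\to 0^+$ using Hörmander's hypoellipticity (already cited in the paper) to identify the limit as a solution in $D(\mathcal{L})$. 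Either route closes the proof; I would prefer the semigroup route for brevity since the underlying SDE theory is already invoked throughout Section \ref{sec2}.
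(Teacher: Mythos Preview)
Your dissipativity argument is correct and matches the paper's. The difference lies in how you establish the range condition.

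The paper does \emph{not} attack $R(I-\mathcal{L})=L^2(\rho_0)$ directly. Instead it proves that $-L_v$ \emph{alone} is maximal monotone: for each fixed $\mb{q}$, the equation $(I-L_v)f=h$ is a non-degenerate elliptic problem in the $\mb{p}$-variable only, so Lax--Milgram on $H^1(e^{-\beta|\mb{p}|^2/2}\dd\mb{p})$ applies without obstruction. Then it invokes a perturbation theorem for sums of monotone operators (Barbu, Cor.~2.6; cf.\ Kato) to conclude that $-L_v + (-T)$ is maximal monotone, using only that $-T$ is monotone (in fact skew) and demicontinuous. This neatly sidesteps the degeneracy in $\mb{q}$ that you correctly flagged as the main obstacle.

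Your route (a) via the SDE semigroup is valid and conceptually clean, but it imports stochastic well-posedness and requires checking that the $L^2(\rho_0)$-generator agrees with (the closure of) $\mathcal{L}$ on $C_c^\infty$, i.e.\ a core argument, which you gloss over. Your route (b) via vanishing viscosity in $\mb{q}$ would also work but is heavier. The paper's decomposition-plus-perturbation argument is the most economical: it avoids both the SDE and any regularization by recognizing that maximality need only be proved for the non-degenerate piece $L_v$, with the transport $T$ handled as a skew perturbation.
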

\begin{proof}
    First, observe that 
    \begin{equation}
        \langle -T f, f \rangle_{L^2({\rho_0} \, \dd \mb{q} \dd \mb{p})} = 0, \quad \langle -L_v f, f \rangle_{L^2({\rho_0} \, \dd \mb{q} \dd \mb{p})} \geq 0,
    \end{equation}
    which shows that both \( -T \) and \( -L_v \) are monotone in \( L^2({\rho_0} \, \dd \mb{q} \dd \mb{p}) \).

    Second, we prove that \( \text{Ran}(I - L_v) = L^2({\rho_0} \, \dd \mb{q} \dd \mb{p}) \), i.e., given any \( h \in L^2({\rho_0} \, \dd \mb{q} \dd \mb{p}) \), there exists a solution \( f \in D(-L_v) \subset L^2({\rho_0} \, \dd \mb{q} \dd \mb{p}) \) such that \( (I - L_v) f = h \).
    Indeed, define a weighted Hilbert space \( H^1(e^{-\frac{1}{2} |\mb{p}|^2} \dd \mb{p}) \) with the norm 
    \[
    \| f \|_1 := \int \rho_0 f^2 \, \dd \mb{p} + \int \rho_0 |\nabla f|^2 \, \dd \mb{p},
    \]
    and a bilinear form on \( H^1(e^{-\frac{1}{2} |\mb{p}|^2} \dd \mb{p}) \times H^1(e^{-\frac{1}{2} |\mb{p}|^2} \dd \mb{p}) \):
    \[
    a(u,v) := \langle (I - L_v) u, v \rangle_{L^2(\rho_0)}.
    \]
    It is clear that \( a(u,v) \) is a coercive and bounded bilinear form on \( H^1(e^{-\frac{1}{2} |\mb{p}|^2} \dd \mb{p}) \times H^1(e^{-\frac{1}{2} |\mb{p}|^2} \dd \mb{p}) \). Therefore, for any \( \mb{q} \), we conclude the existence of \( (I - L_v) f(\mb{q}, \cdot) = h(\mb{q}, \cdot) \) by the Lax-Milgram theorem.

    Third, we show that the sum of \( -T \) and \( -L_v \) is a maximal monotone operator by using \cite[Cor 2.6]{Barbu_2010}. From the above, we have already concluded that \( -L_v \) is a maximal monotone operator in \( L^2({\rho_0} \, \dd \mb{q} \dd \mb{p}) \) and \( -T \) is monotone. It remains to verify that \( -T \) is demicontinuous (see \cite{Kato_1964}), i.e., if \( f_n \in D(-T) \) converges strongly to \( f \in D(-T) \) in \( L^2(\rho_0 \, \dd \mb{q} \dd \mb{p}) \), then \( -T f_n \) converges weak* to \( -T f \) in \( (L^2({\rho_0} \, \dd \mb{q} \dd \mb{p}))^* \). It is clear that \( -T \) is a linear operator in \( L^2({\rho_0} \, \dd \mb{q} \dd \mb{p}) \) and thus demicontinuous.
\end{proof}

\begin{remark}
    Define the Hamiltonian \( \mathcal{H}(\rho, \xi) := \langle e^{-\xi} \mathcal{L} e^{\xi}, \rho \rangle \). Then the symmetry condition (ii) for \( \mathcal{L} \) is equivalent to 
    \begin{equation}\label{eq:Hcondition}
        \mathcal{H}(\rho,\xi) = \mathcal{H}(\rho, \log\frac{\rho}{\rho_0} - \xi), \quad \forall \xi, \rho \in C_c^\infty(\R^d).
    \end{equation}
        Indeed, if \( \mathcal{L} \) is symmetric with respect to \( \rho_0 \), then 
    \begin{align*}
        \mathcal{H}(\rho, \log\frac{\rho}{\rho_0} -\xi) &= \left\langle \frac{\rho_0}{\rho} e^{\xi} \mathcal{L}\left(\frac{\rho}{\rho_0} e^{-\xi}\right) , \rho \right\rangle \\
        &= \left\langle e^{\xi}, \mathcal{L}\left(\frac{\rho}{\rho_0} e^{-\xi}\right) \right\rangle_{\rho_0} = \langle \mathcal{L} e^{\xi}, \rho e^{-\xi} \rangle = \mathcal{H}(\rho,\xi).
    \end{align*}
    If \eqref{eq:Hcondition} holds, let 
    \[
    \xi = \log(\phi_2), \quad \rho = \phi_1 \phi_2 \rho_0,
    \]
    then:
    \[
    \mathcal{H}(\rho, \log\dfrac{\rho}{\rho_0} - \xi) = \langle \phi_2, \mathcal{L} \phi_1 \rangle_{\rho_0}, \quad \mathcal{H}(\rho, \xi) = \langle \phi_1, \mathcal{L} \phi_2 \rangle_{\rho_0}.
    \]
    Therefore, \eqref{eq:Hcondition} yields the symmetry of \( \mathcal{L} \) with respect to \( \langle \cdot, \cdot \rangle_{\rho_0} \).
\end{remark}

\begin{proof}[Proof of Lemma \ref{prop:contraction}]
    We first prove part (i). To start, we show that \( -\mathcal{L^*_{\eps}} \) is maximal accretive. Define
    \[
    H := \left\{ u \mid \frac{u}{\rho_{\infty}^{\eps}} \in H^1(\rho_{\infty}^{\eps}) \right\},
    \]
    and equip \( H \) with the following inner product 
    \[
    \langle u,v \rangle_H = \left\langle \frac{u}{\rho_{\infty}^{\eps}}, \frac{v}{\rho_{\infty}^{\eps}} \right\rangle_{H^1(\rho_{\infty}^{\eps})}.
    \]
    Clearly, \( H \subset L^2(1/\rho_{\infty}^{\eps}) \), and \( H \) is a Hilbert space. Let the domain of \( \mathcal{L^*_{\eps}} \) be 
    \[
    D(\mathcal{L^*_{\eps}}) := \{ u \mid u \in H, \ \mathcal{L^*_{\eps}} u \in L^2(1/\rho_\infty^\eps) \} \subset L^2(1/\rho_\infty^\eps).
    \]
    Define \( \mb{b} = -\nabla V + \eps \mb{M} \). Then, for any \( u \in D(\mathcal{L^*_{\eps}}) \), we have
    \begin{equation} \label{eq:calc}
    \begin{aligned}
        \langle u, \mathcal{L^*_{\eps}} u \rangle_{L^2(1/\rho_{\infty}^{\eps})}
        &= \int_{\R^d} \frac{u}{\rho_{\infty}^{\eps}} \mathcal{L^*_{\eps}} u \, \dd \mb{q} \\
        &= \int_{\R^d} \frac{u}{\rho_{\infty}^{\eps}} \left[ \nabla \cdot \left( \bm{\sigma} \bm{\sigma}^T \left( -\mb{b} \rho_\infty^\eps \frac{u}{\rho_\infty^\eps} + \nabla \left( \rho_\infty^\eps \cdot \frac{u}{\rho_\infty^\eps} \right) \right) \right) \right] \dd \mb{q} \\
        &= \int_{\R^d} \frac{u}{\rho_{\infty}^{\eps}} \left[ \nabla \cdot \left( \frac{u}{\rho_\infty^\eps} \bm{\sigma} \bm{\sigma}^T \left( -\mb{b} \rho_\infty^\eps + \nabla \rho_\infty^\eps \right) \right) \right] \dd \mb{q} \\
        & \quad + \int_{\R^d} \frac{u}{\rho_{\infty}^{\eps}} \left[ \nabla \cdot \left( \bm{\sigma} \bm{\sigma}^T \rho_\infty^\eps \nabla \left( \frac{u}{\rho_\infty^\eps} \right) \right) \right] \dd \mb{q} \\
        &= -\int_{\R^d} \rho_\infty^\eps \left| \bm{\sigma}^T \nabla \left( \frac{u}{\rho_{\infty}^{\eps}} \right) \right|^2 \dd \mb{q} - \frac{1}{2} \int_{\R^d} \left| \frac{u}{\rho_{\infty}^{\eps}} \right|^2 \mathcal{L^*_{\eps}} \rho_\infty^\eps \dd \mb{q} \\
        &= -\int_{\R^d} \rho_\infty^\eps \left| \bm{\sigma}^T \nabla \left( \frac{u}{\rho_{\infty}^{\eps}} \right) \right|^2 \dd \mb{q} \leq 0.
    \end{aligned}
    \end{equation}
    Thus, \( -\mathcal{L^*_{\eps}} \) is accretive.

    Next, we prove that \( R(I - \mathcal{L^*_{\eps}}) = L^2(1/\rho_\infty^\eps) \). This can be derived by Lax-Milgram's theorem. For arbitrary \( f \in L^2(1/\rho_\infty^\eps) \), define the linear functional 
    \[
    f(u) \colon L^2(1/\rho_\infty^\eps) \to \R, \quad u \mapsto f(u) := \langle u, f \rangle_{L^2(1/\rho_\infty^\eps)},
    \]
    which is continuous on \( H \). Now, consider the bilinear form:
    \[
    a(u,v) \colon H \times H \to \R, \quad (u,v) \mapsto a(u,v) := \langle (I - \mathcal{L^*_{\eps}}) u, v \rangle_{L^2(1/\rho_\infty^\eps)}.
    \]
    By direct calculation and Cauchy-Schwarz inequality, we obtain 
    \begin{align*}
    |a(u,v)| 
&= \left|\int_{\R^d}uv\dfrac{1}{\rho_\infty^\eps}\dd\mb q-\int_{\R^d}\dfrac{1}{\rho_\infty^\eps}u\mathcal{L^*_{\eps}}v\dd\mb q\right|\\
	&  \leq \| u \|_{L^2(1/\rho_\infty^\eps)} \| v \|_{L^2(1/\rho_\infty^\eps)} + \sqrt{\int_{\R^d} \rho_\infty^\eps \left| \bm{\sigma}^T \nabla \left( \frac{u}{\rho_{\infty}^{\eps}} \right) \right|^2 \dd \mb{q}} \cdot \sqrt{\int_{\R^d} \rho_\infty^\eps \left| \bm{\sigma}^T \nabla \left( \frac{v}{\rho_{\infty}^{\eps}} \right) \right|^2 \dd \mb{q}}.
    \end{align*}
    Therefore, we have 
    \[
    |a(u,v)| \leq c \| u \|_H \| v \|_H,
    \]
    where \( c > 0 \) is a constant depending on \( \bm{\sigma} \). Meanwhile,
    \[
    a(u,u) = \| u \|_{L^2(1/\rho_\infty^\eps)}^2 + \int_{\R^d} \rho_\infty^\eps \left| \bm{\sigma}^T \nabla \left( \frac{u}{\rho_{\infty}^{\eps}} \right) \right|^2 \dd \mb{q} \geq c' \| u \|_H^2,
    \]
    where \( c' > 0 \) is a constant depending on \( \bm{\sigma} \). Therefore, \( a \) is bounded and coercive on \( H \). By Lax-Milgram's theorem, there exists \( u \in H \) such that \( a(u,v) = f(v) \) holds for all \( v \in H \). Hence, \( (I - \mathcal{L^*_{\eps}}) u = f \), and thus \( R(I - \mathcal{L^*_{\eps}}) = L^2(1/\rho_\infty^\eps) \).

    Finally, by the Hille-Yosida theorem, we conclude that \( \mathcal{L^*_{\eps}} \) generates a strongly continuous semigroup of contractions in \( L^2(1/\rho_\infty^\eps) \).

    For part (ii), by \cite{buhler2018functional}, equation \eqref{eq:FPOLE} admits a unique solution \( \rho^{\varepsilon}(\mb{q},t) \in C^1([0,T],L^2(1/\rho_\infty^\varepsilon)) \) for any \( T > 0 \).
\end{proof}

\begin{proof}[Proof of Theorem \ref{thm:moR}]
    Notice that the structure of the generator in \eqref{eq:BGLE} is quite similar to the generator in \eqref{eq:BULE}, with the exception that only \( \mb{p} \) is an odd variable.

    \textbf{Step 1.} \( (i) \Rightarrow (ii) \) is exactly the same as Step 1 in the proof of Theorem \ref{thm:revULE}.

    \textbf{Step 2.} We prove \( (ii) \Rightarrow (iii) \).
    From the definition of the generator \( \sL \) in \eqref{eq:BGLE}, \eqref{symLm} implies
    \begin{equation}
    \begin{aligned}
        \iiint \Big[&-\varphi_2(\mb{p}) \psi_2(\mb{p}) \varphi_3(\mb{z}) \psi_3(\mb{z}) \, \mb{p} \cdot \nabla_{\mb{q}} \bbs{\psi_1(\mb{q}) \varphi_1(\mb{q})} \\
        &- \varphi_1(\mb{q}) \psi_1(\mb{q}) \varphi_3(\mb{z}) \psi_3(\mb{z}) (\mb{b} + \mb{z}) \cdot \nabla_{\mb{p}} \bbs{\varphi_2(\mb{p}) \psi_2(\mb{p})} \\
        &- \varphi_1(\mb{q}) \psi_1(\mb{q}) \varphi_2(\mb{p}) \psi_2(\mb{p}) \, \mb{p} \cdot \nabla_{\mb{z}} \bbs{\varphi_3(\mb{z}) \psi_3(\mb{z})} \Big] e^{-\beta H(\mb{q}, \mb{p}, \mb{z})} \, \dqpz \\
        = & \iiint \Bigg[ \psi_1(\mb{q}) \varphi_1(\mb{q}) \varphi_2(\mb{p}) \psi_2(\mb{p}) \alpha \Big( \psi_3(\mb{z}) \, \mb{z} \cdot \nabla_{\mb{z}} \varphi_3(\mb{z}) \\
        &- \varphi_3(\mb{z}) \, \mb{z} \cdot \nabla_{\mb{z}} \psi_3(\mb{z}) + \frac{1}{\beta} \bbs{\varphi_3(\mb{z}) \Delta_{\mb{z}} \psi_3(\mb{z}) - \psi_3(\mb{z}) \Delta_{\mb{z}} \varphi_3(\mb{z})} \Big) \Bigg] e^{-\beta H(\mb{q}, \mb{p}, \mb{z})} \, \dqpz.
    \end{aligned}
    \end{equation}
    Using integration by parts, this simplifies as 
    \begin{align} \label{tm_RH}
        \iiint \varphi_1 \psi_1 \varphi_2 \psi_2 \varphi_3 \psi_3 & \bbs{ \mb{p} \cdot \nabla_{\mb{q}} e^{-\beta H} + (\mb{b} + \mb{z}) \cdot \nabla_{\mb{p}} e^{-\beta H} - \mb{p} \cdot \nabla_{\mb{z}} e^{-\beta H}} \dqpz \nonumber \\
        &= \iiint \varphi_1 \psi_1 \varphi_2 \psi_2 \bbs{ \frac{1}{\beta} \nabla_{\mb{z}} e^{-\beta H} + \mb{z} e^{-\beta H}} \bbs{ \psi_3 \nabla_{\mb{z}} \varphi_3 - \varphi_3 \nabla_{\mb{z}} \psi_3 } \dqpz.
    \end{align}
    Taking \( \varphi_3(\mb{z}) = \psi_3(\mb{z}) \), the RHS of \eqref{tm_RH} becomes zero, yielding 
    \begin{equation} \label{tm_pbq}
        \mb{p} \cdot \nabla_{\mb{q}} e^{-\beta H} + (\mb{b} + \mb{z}) \cdot \nabla_{\mb{p}} e^{-\beta H} - \mb{p} \cdot \nabla_{\mb{z}} e^{-\beta H} = 0.
    \end{equation}
    Thus, by the auxiliary Lemma \ref{lmm:aux}, we obtain 
    \begin{equation}
        \frac{1}{\beta} \nabla_{\mb{z}} e^{-\beta H} + \mb{z} e^{-\beta H} = 0.
    \end{equation}
    This implies \( e^{-\beta H + \frac{\beta}{2}|\mb{z}|^2} \) is independent of \( \mb{z} \). Rewriting \eqref{tm_pbq} as 
    \begin{equation} \label{tm269}
        \mb{p} \cdot \nabla_{\mb{q}} e^{-\beta H + \frac{\beta}{2} |\mb{z}|^2} + \mb{b} \cdot \nabla_{\mb{p}} e^{-\beta H + \frac{\beta}{2} |\mb{z}|^2} + \mb{z} \cdot \bbs{ \nabla_{\mb{p}} e^{-\beta H + \frac{\beta}{2} |\mb{z}|^2} + \beta \mb{p} e^{-\beta H + \frac{\beta}{2} |\mb{z}|^2}} = 0,
    \end{equation}
    we conclude 
    \begin{equation} \label{tm270}
        \nabla_{\mb{p}} e^{-\beta H + \frac{\beta}{2} |\mb{z}|^2} + \beta \mb{p} e^{-\beta H + \frac{\beta}{2} |\mb{z}|^2} = 0.
    \end{equation}
    Therefore, \( e^{-\beta H + \frac{\beta}{2} |\mb{p}|^2 + \frac{\beta}{2} |\mb{z}|^2} \) depends only on \( \mb{q} \), and we conclude there exists a potential \( U(\mb{q}) \) such that \( H = U(\mb{q}) + \frac{1}{2}|\mb{p}|^2 + \frac{1}{2}|\mb{z}|^2 \). From:
    \[
    \mb{p} \cdot \nabla_{\mb{q}} e^{-\beta H + \frac{\beta}{2} |\mb{z}|^2} + \mb{b} \cdot \nabla_{\mb{p}} e^{-\beta H + \frac{\beta}{2} |\mb{z}|^2} = 0,
    \]
    we further conclude \( b(\mb{q}) = -\nabla_{\mb{q}} U(\mb{q}) \).

    \textbf{Step 3.} \( (iii) \Rightarrow (i) \) is exactly the same as Step 3 in the proof of Theorem \ref{thm:revULE}.

    \textbf{Step 4.} The statement \( (iii) \) implies both statements \( (iv) \) and \( (v) \), which are obvious.

    To show that \( (iv) \) implies \( (iii) \), similar to \eqref{rho_Q}, we use the evenness in \( \mb{p} \) to obtain that \( \rho_0(\mb{q}, \mb{p}, \mb{z}) \) solves 
    \[
    \nabla_{\mb{z}} \cdot \bbs{ \mb{z} \rho_0 + \frac{1}{\beta} \nabla_{\mb{z}} \rho_0 } = 0.
    \]
    By Proposition \ref{prop:uniqueness}, we conclude there exists \( c(\mb{q}, \mb{p}) > 0 \) such that 
    \[
    \rho_0 = c(\mb{q}, \mb{p}) e^{-\frac{\beta}{2} |\mb{z}|^2}.
    \]
    Then, by similar arguments to \eqref{tm269} and \eqref{tm270}, we can conclude \( (iii) \).

    To show that \( (v) \) implies \( (iii) \), we plug \( \rho_0 = U_1(\mb{q}) U_2(\mb{p}) U_3(\mb{z}) \) into \( \sL^* \rho_0 = 0 \) and obtain:
    \begin{align} \label{eq:tm273}
        -\mb{p} \cdot \nabla_{\mb{q}} \log U_1(\mb{q}) - \mb{b} \cdot \nabla_{\mb{p}} \log U_2(\mb{p}) &+ \mb{z} \cdot \nabla_{\mb{p}} \log U_2(\mb{p}) + \mb{p} \cdot \nabla_{\mb{z}} \log U_3(\mb{z}) \nonumber \\
        &+ \frac{\alpha}{U_3(\mb{z})} \nabla_{\mb{z}} \cdot \bbs{ U_3(\mb{z}) \mb{z} + \frac{1}{\beta} \nabla_{\mb{z}} U_2(\mb{z})} = 0.
    \end{align}
    By similar arguments as in \eqref{eq:help8}, we have:
    \[
    \log U_2(\mb{p}) = \mb{p}^T \mb{A} \mb{p} + \mb{a} \cdot \mb{p} + c, \quad \log U_3(\mb{z}) = \mb{z}^T \mb{B} \mb{z} + \mb{d} \cdot \mb{z} + e,
    \]
    for some constant matrices \( A, B \in \R^{d \times d} \), and vectors \( \mb{a}, \mb{d} \in \R^d \), and constants \( c, e \in \R \). By similar arguments as in \eqref{tmAA}, we conclude that \( A = -\frac{\beta}{2} I \), \( B = -\frac{\beta}{2} I \), \( \mb{a} = 0 \), and \( \mb{d} = 0 \). Finally, we obtain 
    \[
    \nabla_{\mb{q}} \log U_1(\mb{q}) = \beta \mb{b} = -\nabla_{\mb{q}} U(\mb{q}),
    \]
    for \( U(\mb{q}) = -\log U_1(\mb{q}) \).
\end{proof}

\section{Omitted Proofs for Exponential Convergence and Smoothness of Fokker-Planck Equations} \label{app:hy}

In this section, we present the omitted proofs for the exponential convergence and smoothness of the solution to the Fokker-Planck equation corresponding to overdamped Langevin dynamics, as well as the hypocoercivity and hypoellipticity results for underdamped Langevin dynamics.

\subsection{Exponential Convergence and Smoothness for the Overdamped Case}

We first provide the proof for the exponential convergence and smoothness in the overdamped case. The proof of Proposition \ref{prop:expTV} invokes the following version of Harris's theorem, as stated in \cite[Theorem 3.6]{hairer2010convergence}.

\begin{theorem}[Harris's theorem, Theorem 3.6 in \cite{hairer2010convergence}] \label{thm:Harris's theorem}
    Suppose that \( \mathcal{P} \) is the Markov operator of a discrete Markov semigroup \( \mathcal{P}^n \) for \( n = 1, 2, \dots \) on \( \mathbb{R}^d \). Let \( p(\mb{x}, \mb{y}) \), where \( \mb{x}, \mb{y} \in \mathbb{R}^d \), denote the transition probability of \( \mathcal{P} \). If the following two conditions hold 
    
    \begin{enumerate}[(i)]
        \item (Lyapunov Function) There exists a function \( U : \mathbb{R}^d \to [0, \infty) \) and constants \( K > 0 \) and \( \gamma \in (0,1) \) such that
        \begin{align} \label{cond:Lyapnov}
            \mathcal{P}U(\mb{x}) \leq \gamma U(\mb{x}) + K
        \end{align}
        for all \( \mb{x} \in \mathbb{R}^d \);
        
        \item (Minorisation) For every \( R > 0 \), there exists a constant \( \alpha > 0 \) such that
        \begin{align}
            \|p(\mb{x}, \cdot) - p(\mb{y}, \cdot)\|_{L^1(\mathbb{R}^d)} \leq 2(1 - \alpha),
        \end{align}
        for all \( \mb{x}, \mb{y} \) such that \( U(\mb{x}) + U(\mb{y}) \leq R \).
    \end{enumerate}
    
    Define the following weighted supremum norm:
    \begin{align}
        \|\varphi\|_{U} = \sup_{\mb{x} \in \mathbb{R}^d} \frac{|\varphi(\mb{x})|}{1 + U(\mb{x})}.
    \end{align}
    
    Then \( \mathcal{P} \) admits a unique invariant measure \( \mu^* \). Furthermore, there exist constants \( C > 0 \) and \( \rho \in (0,1) \) such that
    \begin{align}
        \left\|\mathcal{P}^n \varphi - \int_{\mathbb{R}^d} \varphi(\mb{x}) \, \mathrm{d} \mu^*\right\|_U 
        \leq C \rho^n \left\|\varphi - \int_{\mathbb{R}^d} \varphi(\mb{x}) \, \mathrm{d} \mu^*\right\|_U.
    \end{align}
\end{theorem}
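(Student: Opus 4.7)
The plan is to follow the Hairer--Mattingly weighted-Wasserstein approach. Introduce on $\R^d$ the metric
\[
d_\beta(\mb{x},\mb{y}) = \bigl(2 + \beta U(\mb{x}) + \beta U(\mb{y})\bigr)\mathbf{1}_{\mb{x}\neq\mb{y}},
\]
with a parameter $\beta>0$ to be fixed, and the associated Kantorovich (Wasserstein-1) distance $W_{d_\beta}$ on probability measures. By Kantorovich--Rubinstein duality, for any signed measure $\mu$ with $\mu(\R^d)=0$, the dual norm is precisely the weighted total-variation norm $\|\mu\|_\beta := \int (1+\beta U)\,\dd|\mu|$, and the corresponding space of Lipschitz test functions agrees (up to constants) with the space $\{\varphi:\|\varphi\|_U<\infty\}$. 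Thus a contraction in $W_{d_\beta}$ will translate directly into a contraction in $\|\cdot\|_U$ for the adjoint action on functions, which is exactly the conclusion sought.

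Next I would establish that $\mathcal{P}$ contracts $W_{d_\beta}$ by a one-step coupling argument with a dichotomy on the level sets of $U$. Fix any $\mb{x},\mb{y}\in\R^d$.
\textbf{Case 1 (far from the small set):} if $U(\mb{x})+U(\mb{y}) \geq R_0$ for a large threshold $R_0$, bound $W_{d_\beta}(\mathcal{P}^*\delta_{\mb{x}},\mathcal{P}^*\delta_{\mb{y}})$ trivially by $2 + \beta(\mathcal{P}U(\mb{x})+\mathcal{P}U(\mb{y}))$ and apply the Lyapunov condition \eqref{cond:Lyapnov} to get
\[
W_{d_\beta}(\mathcal{P}^*\delta_{\mb{x}},\mathcal{P}^*\delta_{\mb{y}}) \leq 2 + \beta\gamma\bigl(U(\mb{x})+U(\mb{y})\bigr) + 2\beta K,
\]
which is at most $\bar\gamma\, d_\beta(\mb{x},\mb{y})$ for some $\bar\gamma<1$ once $R_0$ is large enough (depending on $\gamma,K,\beta$). \textbf{Case 2 (on the small set):} if $U(\mb{x})+U(\mb{y}) \leq R_0$, apply minorisation at level $R=R_0$ to obtain a coupling $(X,Y)$ of $\mathcal{P}^*\delta_{\mb{x}}$ and $\mathcal{P}^*\delta_{\mb{y}}$ with $\P(X\neq Y)\leq 1-\alpha$, and estimate
\[
W_{d_\beta}(\mathcal{P}^*\delta_{\mb{x}},\mathcal{P}^*\delta_{\mb{y}}) \leq \E[d_\beta(X,Y)] \leq (1-\alpha)\bigl(2 + \beta\gamma(U(\mb{x})+U(\mb{y})) + 2\beta K\bigr).
\]
Choosing $\beta$ small enough (in terms of $\alpha,\gamma,K,R_0$) so that the resulting prefactor in Case 2 is strictly below $1$ and is also below the $d_\beta(\mb{x},\mb{y})$-prefactor of Case 1, both cases combine to yield a uniform contraction
\[
W_{d_\beta}(\mathcal{P}^*\mu_1,\mathcal{P}^*\mu_2) \leq \rho\, W_{d_\beta}(\mu_1,\mu_2), \qquad \rho\in(0,1),
\]
for all probability measures $\mu_1,\mu_2$ with finite $U$-moment.

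From here the existence and uniqueness of an invariant measure $\mu^*$ follow by Banach's fixed-point theorem, upon checking that the space $\mathcal{M}_U := \{\mu \in \mathcal{P}(\R^d) : \int U\,\dd\mu < \infty\}$ is complete under $W_{d_\beta}$ and that $\mathcal{P}^*$ maps $\mathcal{M}_U$ into itself (using \eqref{cond:Lyapnov} to iterate: $\mathcal{P}^n U \leq \gamma^n U + K/(1-\gamma)$). Iterating the contraction and dualising yields, for any $\varphi$ with $\bar\varphi := \int \varphi\,\dd\mu^*$,
\[
\bigl|\mathcal{P}^n\varphi(\mb{x}) - \bar\varphi\bigr| \leq C\rho^n(1+U(\mb{x}))\,\|\varphi - \bar\varphi\|_U,
\]
which gives the claim after taking the supremum in $\mb{x}$ weighted by $1/(1+U(\mb{x}))$.

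The main obstacle is the tuning step: the parameter $\beta$ must be chosen small enough that the minorisation contraction factor $(1-\alpha)$ in Case 2 absorbs the $\beta K$ additive error, \emph{and} simultaneously the Lyapunov-only bound in Case 1 beats $d_\beta$ by a factor strictly below one; this forces a delicate interplay between $\beta$, $R_0$, $\alpha$, $\gamma$, $K$. Once this calibration is done the coupling arguments in each case are routine. The duality step converting the Wasserstein contraction on measures back to the weighted supremum norm on observables is standard but requires verifying that elements of $\{\varphi : \|\varphi\|_U <\infty\}$ are, after centring by $\bar\varphi$, Lipschitz with respect to $d_\beta$ with explicit constant proportional to $\|\varphi-\bar\varphi\|_U/\beta$.
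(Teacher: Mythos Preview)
The paper does not prove this theorem at all: it is quoted verbatim as Theorem~3.6 of Hairer--Mattingly \cite{hairer2010convergence} and then applied as a black box in the proof of Proposition~\ref{prop:expTV}. Your outline is precisely the weighted-Wasserstein argument of that original reference, so in that sense you are reproducing exactly the proof the paper defers to.

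One small slip worth flagging in your Case~2 estimate: the bound
\[
\E[d_\beta(X,Y)] \leq (1-\alpha)\bigl(2 + \beta\gamma(U(\mb{x})+U(\mb{y})) + 2\beta K\bigr)
\]
is not what the coupling gives. The indicator $\mathbf{1}_{X\neq Y}$ only kills the constant $2$ with probability $\alpha$; the $U$-terms are handled by dropping the indicator (they are nonnegative) and using the Lyapunov bound on the full expectation $\E[U(X)]=\mathcal{P}U(\mb{x})$. The correct inequality is
\[
\E[d_\beta(X,Y)] \leq 2(1-\alpha) + \beta\gamma\bigl(U(\mb{x})+U(\mb{y})\bigr) + 2\beta K,
\]
which is larger than what you wrote but still yields a contraction against $d_\beta(\mb{x},\mb{y})\geq 2$ once $\beta$ is chosen small enough relative to $\alpha$, $K$, and $R_0$. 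The rest of your calibration discussion and the duality step are on target.
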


In the proof of Proposition \ref{prop:expTV}, we take \( \mathcal{P} = e^{t_0 \mathcal{L}_\varepsilon} \) for some appropriately chosen \( t_0 \), and the Lyapunov function \( U(\mb{x}) = e^{V(\mb{x})/3} \), where \( V(\mb{x}) \) is the potential function.

\begin{proof}[Proof of Proposition \ref{prop:expTV}]
We directly employ Theorem B in \cite{ji2019convergence}. Consider \( U(\mb q) = e^{V(\mb q)/3} \), which is in \( L^1(\rho_0) \). Moreover, we have
\begin{align*}
    \mathcal{L}_\varepsilon U(\mb q) = \frac{1}{3} \left( \mathrm{tr}(\bm\sigma\bm\sigma^T\nabla^2V(\mb q)) - \frac{2}{3}|\bm\sigma^T\nabla V(\mb q)|^2 + \varepsilon \bm\sigma\bm\sigma^T\mb M(\mb q) \cdot \nabla V(\mb q) \right) e^{V(\mb q)/3}.
\end{align*} 
For \( \mb q \) that is not in the support of \( \mb M \) and sufficiently large, by Assumptions (I) and (II), we have
\begin{align*}
    \mathcal{L}_\varepsilon U(\mb q) \leq -\frac{1}{18}|\bm\sigma^T\nabla V(\mb q)|^2 e^{V(\mb q)/3} \leq -C_1 e^{V(\mb q)/3}.
\end{align*}
Thus, for all \( \mb q \in \mathbb{R}^d \), we have
\begin{align} \label{eq:lyap}
    \mathcal{L}_\varepsilon U(\mb q) \leq C_2 - C_1 U(\mb q)
\end{align}
for some constants \( C_1 > 0 \) and \( C_2 > 0 \). Therefore, \( U(\mb q) \) is a strongly unbounded Lyapunov function. By Theorem B in \cite{ji2019convergence}, we conclude exponential convergence.

Next, we prove that \( C_\varepsilon \) and \( r_\varepsilon \) can be selected uniformly. Indeed, the proof of Theorem B in \cite{ji2019convergence} relies on Harris's theorem \cite{hairer2010convergence}, which is based on the following two conditions:

1. Lyapunov condition:\quad For some \( t_0 > 0 \), there exist constants \( \gamma \in (0, 1) \), \( K > 0 \), and a function \( U : \mathbb{R}^d \to \mathbb{R}^+ \) such that
\begin{align} \label{eq:contraction}
    e^{t_0 \mathcal{L}_\varepsilon} U(\mb q) \leq \gamma U(\mb q) + K
\end{align}
holds for all \( \mb q \in \mathbb{R}^d \). According to \cite[Lemma 3.4]{ji2019convergence}, \eqref{eq:contraction} holds by choosing \( U(\mb q) = e^{V(\mb q)/3} \), i.e., the unbounded strong Lyapunov function. Moreover, \( \gamma \) and \( K \) depend on \( \mb M \), \( V \), \( \bm\sigma \), and constants \( C_1 \) and \( C_2 \) in \eqref{eq:lyap}, which are all uniform in \( \varepsilon \).

2. Minorisation condition:\quad It is required to verify the following local minorisation condition: for all \( R > 0 \), there exists \( \alpha > 0 \) such that
\begin{align} \label{eq:contraction2}
    \| p^\varepsilon(\mb q_1, \cdot, t_0) - p^\varepsilon(\mb q_2, \cdot, t_0) \|_{L^1(\mathbb{R}^d)} \leq 2(1 - \alpha)
\end{align} 
holds for all \( \mb q_1 \) and \( \mb q_2 \) in the set \( \{ (\mb q_1, \mb q_2) : U(\mb q_1) + U(\mb q_2) \leq R \} \). Here, \( p^\varepsilon(\mb q_1, \mb q, t_0) \) is the transition probability. By \cite[Lemma 3.3]{ji2019convergence}, we know that \eqref{eq:contraction2} holds for \( \varepsilon = 0 \) with some \( \alpha = \alpha_0 \). Then, by \cite[Corollary 9.8.26]{bogachev2015fokker}, there exists a constant \( C > 0 \) (depending on \( \mb M \), \( V \), and \( \bm\sigma \)) such that for all \( \mb q_1 \) with \( U(\mb q_1) \leq R \), we have
\begin{align*}
    \| p^\varepsilon(\mb q_1, \cdot, t_0) - p^0(\mb q_1, \cdot, t_0) \|_{L^1(\mathbb{R}^d)} \leq C \varepsilon.
\end{align*}
Thus, there exists \( \varepsilon_0 > 0 \) such that for all \( \varepsilon \in (0, \varepsilon_0) \),
\begin{align}
    \| p^\varepsilon(\mb q_1, \cdot, t_0) - p^\varepsilon(\mb q_2, \cdot, t_0) \|_{L^1(\mathbb{R}^d)} \leq 2(1 - \alpha_0 / 2).
\end{align}
Therefore, for each \( R > 0 \), one can find \( \alpha > 0 \) such that \eqref{eq:contraction2} holds uniformly for sufficiently small \( \varepsilon \).

Finally, according to \cite[Remark 3.10]{hairer2010convergence}, the contraction constants \( C_\varepsilon \) and \( r_\varepsilon \) depend on \( \mb M \), \( V \), \( \bm\sigma \), and the following constants: by selecting \( R \) sufficiently large such that
\begin{align*}
    \gamma_0 := \gamma + \frac{2K}{R} < 1, \quad \beta := \frac{\alpha}{2K}, \quad \alpha' := \max \left\{ 1 - \frac{\alpha}{2}, \frac{2 + R \beta \gamma_0}{2 + R \beta} \right\}.
\end{align*}
Here, \( \gamma \) and \( K \) are from \eqref{eq:contraction}, and \( \alpha \) is from \eqref{eq:contraction2}, all of which are uniform for \( \varepsilon \in (0, \varepsilon_0) \). Thus, \( C_\varepsilon \) and \( r_\varepsilon \) are also uniform in \( \varepsilon \in (0, \varepsilon_0) \).
\end{proof}

\begin{proof}[Proof of Lemma \ref{lmm:smoothOL}]
Consider the coordinate transformation \( \mb q' = \bm\sigma^{-1} \mb q \). Then we have \( \nabla_{\mb q'} f = \bm\sigma^T \nabla_{\mb q} f \). Thus, one can reformulate \( \mathcal{L}_{\varepsilon}^* - \frac{\partial}{\partial t} \) as
\begin{align*}
    \mathcal{L}_{\varepsilon}^* - \frac{\partial}{\partial t} 
    &= \sum_{j=1}^d \left( \frac{\partial}{\partial \mb q_j'} \right)^2 
    + \left( \nabla_{\mb q_j'} V(\bm\sigma \mb q') - \varepsilon \bm\sigma^T \mb M(\bm\sigma \mb q') \right) \cdot \nabla_{\mb q_j'} - \frac{\partial}{\partial t} \\
    &\quad + \left( \Delta_{\mb q'} V(\bm\sigma \mb q') - \varepsilon \nabla_{\mb q_j'} \cdot \bm\sigma^T \mb M(\bm\sigma \mb q') \right).
\end{align*}
Let
\begin{align*}
    \mb X_j &= \mb e_j \in \mathbb{R}^{d+1}, \quad j = 1, 2, \dots, d, \quad \mb X_0 = -\mb e_{d+1} + \left[ \nabla_{\mb q_j'} V(\bm\sigma \mb q') - \varepsilon \bm\sigma^T \mb M(\bm\sigma \mb q'), 0 \right], \\
    c &= \Delta_{\mb q'}^2 V(\bm\sigma \mb q') - \varepsilon \nabla_{\mb q_j'} \cdot \bm\sigma^T \mb M(\bm\sigma \mb q').
\end{align*}
Then, we can express \( \mathcal{L}_{\varepsilon}^* - \frac{\partial}{\partial t} \) as
\begin{align*}
    \mathcal{L}_{\varepsilon}^* - \frac{\partial}{\partial t} = \sum_{i=1}^d (\mb X_j)^2 + \mb X_0 + c.
\end{align*}
Note that \( \mb X_i \) for \( i = 1, 2, \dots, d+1 \) are linearly independent at any point \( (\mb q', t) \in \mathbb{R}^d \times (0, +\infty) \). By Theorem \ref{thm:hypoellipticity}, we know that \( \mathcal{L}_{\varepsilon}^* - \frac{\partial}{\partial t} \) is hypoelliptic, and \( \rho^{\varepsilon}(\mb q, t) \) is smooth in both \( \mb q \) and \( t \).
\end{proof}

Now we provide the well-posedness of the Fokker-Planck equation for the overdamped case. The existence and uniqueness are obtained via the semigroup method, while the smoothness is established using Hörmander's hypoellipticity theorem \ref{thm:hypoellipticity}.

\begin{proof}[Proof of Lemma \ref{lem:4.1semi}]
We define a weighted Hilbert space \( L^2(\rho_\infty^\varepsilon) \) with the weight \( \rho_\infty^\varepsilon(\mb q, \mb p) > 0 \) and the norm
\[
    \|f\|_{L^2(\rho_\infty^\varepsilon)} := \iint \rho_\infty^\varepsilon f^2 \, \mathrm{d}\mb q \, \mathrm{d}\mb p.
\]
Additionally, we define a weighted Hilbert space \( H^1(\rho_\infty^\varepsilon) \) with the norm
\[
    \|f\|_{H^1(\rho_\infty^\varepsilon)} := \iint \rho_\infty^\varepsilon f^2 \, \mathrm{d}\mb q \, \mathrm{d}\mb p + \iint \rho_\infty^\varepsilon |\nabla f|^2 \, \mathrm{d}\mb q \, \mathrm{d}\mb p.
\]

For (i), we have
\[
    \frac{\rho_0(\mb q, \mb p)}{\rho_\infty^\varepsilon(\mb q, \mb p)} = \frac{e^{-\varepsilon W(\mb q)} Z_{\varepsilon}}{Z_0}.
\]
Since \( W \in C_c^\infty(\mathbb{R}^d; \mathbb{R}) \), for \( \mb q \) outside the support of \( W \), \( \frac{\rho_0(\mb q, \mb p)}{\rho_\infty^\varepsilon(\mb q, \mb p)} \) is a constant. Therefore, it is in \( L^2(\rho_\infty^\varepsilon) \), and any order derivative of \( \frac{\rho_0}{\rho_\infty^\varepsilon} \) is also in \( L^2(\rho_\infty^\varepsilon) \). Thus, (i) holds.

For (ii), by Lemma \ref{Lmm}, we know that if \( \mb b = -\nabla_{\mb q} U(\mb q) \), then \( -\mathcal{L}_{\varepsilon,1} \) is a maximal monotone operator in \( L^2(\rho_\infty^\varepsilon \, \mathrm{d}\mb q \, \mathrm{d}\mb p) \), with the weight defined in \eqref{eq:req}. By the Hille-Yosida theorem, \( \mathcal{L}_{\varepsilon,1} \) generates\footnote{Sometimes $-\sL_{\eps,1}$ is called the infinitesimal generator instead of $\sL$.} a strongly continuous semigroup on \( L^2(\rho_\infty^\varepsilon) \), denoted as \( S(t) = e^{t\mathcal{L}_{\varepsilon,1}} \).

For (iii), from the contraction semigroup in (ii), we know that the Cauchy problem
\[
    \frac{\partial p^\varepsilon}{\partial t} = \mathcal{L}_{\varepsilon,1} p, \quad p(\mb q, \mb p, 0) = \frac{\rho_0}{\rho_\infty^\varepsilon},
\]
is well-posed, and \( p^\varepsilon(\mb q, \mb p, t) \in C^1([0, T]; L^2(\rho_\infty^\varepsilon)) \). Thus, \eqref{eq:FPpUL} admits a unique solution \( \rho^\varepsilon(\mb q, \mb p, t) \in C^1([0, T]; L^2(\rho_\infty^\varepsilon)) \).
\end{proof}

\begin{proof}[Proof of Lemma \ref{lmm:smoothUL}]
We apply Theorem \ref{thm:hypoellipticity} again. For simplicity, we assume \( \bm \sigma = \mb I \). If this is not the case, a coordinate transformation similar to that in Lemma \ref{lmm:smoothOL} can be used.

The operator \( \mathcal{L}_{\eps}^* - \dfrac{\partial}{\partial t} \) can be expressed as
\begin{align*}
    \mathcal{L}_{\eps}^* - \dfrac{\partial}{\partial t} = \sum_{i=1}^d (\mb X_i)^2 + \mb X_0 + c,
\end{align*}
where $c=d$ and
\begin{align*}
    \mb X_i &= \dfrac{\partial}{\partial p_i}, \quad i = 1, 2, \dots, d, \\
    \mb X_0 &= -\mb p \cdot \nabla_{\mb q} + \left( \nabla_{\mb q} (V - \eps W) + \mb p \right) \cdot \nabla_{\mb p} - \dfrac{\partial}{\partial t}.
\end{align*}
Notice that 
\begin{align*}
    [\mb X_i, \mb X_0] = -\dfrac{\partial}{\partial q_i} + \dfrac{\partial}{\partial p_i}, \quad i = 1, 2, \dots, d.
\end{align*}
Thus, at any point \( (\mb q, \mb p, t) \in \mathbb{R}^{2d} \times (0, \infty) \), we have
\begin{align*}
    \mathrm{span} \{ \mb X_1, \mb X_2, \dots, \mb X_d, [\mb X_1, \mb X_0], \dots, [\mb X_d, \mb X_0], \mb X_0 \} = \mathbb{R}^{2d+1}.
\end{align*}
By Theorem \ref{thm:hypoellipticity}, we conclude that \( \rho^\eps(\mb q, \mb p, t) \) is smooth.
\end{proof}


\bibliographystyle{alpha}
\bibliography{osgbib}
\end{document}